\date{August 29, 2023}   %
\title{Double Forms, Curvature Integrals  and \\  the Gauss--Bonnet Formula}
\author{Marc Troyanov\footnote{Institut  de Math\'{e}matiques EPFL, 1015 Lausanne, Switzerland;
 \  marc.troyanov@epfl.ch }}
\numberwithin{equation}{section}
\newtheorem{theorem}{\rm\bf Theorem}[section]
\newtheorem{proposition}[theorem]{\rm\bf Proposition}
\newtheorem{lemma}[theorem]{\rm\bf Lemma}
\newtheorem{corollary}[theorem]{\rm\bf Corollary}
\newtheorem{remark}[theorem]{\rm\bf Remark}
\newtheorem{definition}[theorem]{\rm\bf Definition}
\newtheorem{example}[theorem]{Example}
\DeclareMathOperator{\dvol}{dvol}
\DeclareMathOperator{\Vol}{Vol}
\DeclareMathOperator{\Pf}{Pf}
\DeclareMathOperator{\Ind}{Ind}
\DeclareMathOperator{\Ct}{C}
\newcommand{\lalpha}{\makebox{\Large\ensuremath{\alpha}}}
\newcommand{\R}{\mathbb{R}}
\newcommand{\bra}{\langle}
\newcommand{\ket}{\rangle}
\def \R {\mathbb R}
\def \RR {\mathcal{R}}
\begin{document}
\maketitle

\bigskip

\begin{abstract}
The Gauss--Bonnet Formula is a significant achievement in 19th century differential geometry for the case of surfaces and the 20th century cumulative work of  H. Hopf,  W. Fenchel, C. B. Allendoerfer, A. Weil and S.S. Chern for higher-dimensional Riemannian manifolds. It relates the Euler characteristic of a Riemannian manifold to a curvature integral over the manifold plus a somewhat enigmatic boundary term. In this paper, we revisit the formula using the formalism of double forms, a tool introduced by de Rham, and further developed by Kulkarni, Thorpe, and Gray. We explore the geometric nature of the boundary term and provide some examples and applications.

\medskip 

{\small	 {\textit{Keywords:  } Gauss--Bonnet Formula, Double Forms, Curvature Integrals.}}

\smallskip 

{\small  {AMS Subject Classification: 58A1, 53C20.}}
\end{abstract} \hspace{10pt}

\tableofcontents 

\section{Introduction}

The Gauss--Bonnet Formula for   Riemannian  surfaces is a remarkable achievement of  19th century differential geometry, resulting from the cumulating work of K. F. Gauss, P. O. Bonnet, J.P.  Binet, and W. von Dyck. The result is well-covered in a number of  textbooks on Riemannian geometry. 

\smallskip

Its extension to higher-dimensions is the result of the combined work of  H. Hopf, W. Fenchel, C. B. Allendoerfer, A. Weil, and S. S. Chern,  spanning the period from 1925 to 1945. The formula relates the Euler characteristic of a compact Riemannian manifold with a curvature integral over the manifold plus a boundary term involving  the curvature of the manifold and the second fundamental form over the boundary.

\smallskip

In his formulation of the  Gauss--Bonnet Formula,  Chern   uses Élie Cartan's moving frame method.  His proof is very direct  but it does not readily reveal the geometric nature of the boundary term. In this paper, we revisit and reframe the higher-dimensional Gauss--Bonnet Formula using double forms, a formalism introduced by G. de Rham and further developed by R. Kulkarni,  J. A. Thorpe, and A. Gray in the 1960s and 70s. Double forms offer a powerful tool for manipulating curvature identities  and provide  us with a different point of view on  the Gaus-Bonnet Formula.

\smallskip

Our objective in this paper  is then  to present the Gauss--Bonnet Formula within the framework of double forms. We provide details for  Chern's original proof and  give some  examples and applications.

\smallskip 

While we do  provide some historical perspective, this paper is not meant to be  a historical  essay.  
Further historical remarks and developments can be found in other sources such as \cite{Gottlieb1996},  \cite{Wu2008},  as well as Volumes 3 and 5 of M. Spivak's treatise \cite{Spivak}. See also the interviews \cite{Bourguignon},  \cite{Interview}. 

\smallskip

Due to space and expertise limitations, we do not explore alternative proofs (such as based on  heat kernel,  probabilistic or  PDEs methods) or broader topics such as the theory of characteristic classes,  the Hirzebruch signature theorem and more generally the 
Atiyah--Singer index theorem. Applications to   physics are also not discussed.

\subsubsection*{Organization of the Paper}

The rest of the paper is organized as follows: In Section 2 we discuss the origin of the problem behind the Gauss--Bonnet Formula, beginning with Hopf's famous "Curvatura Integra" problem. Section 3 introduces the notion of double forms on a manifold. These are tensors that provide a convenient algebraic framework for manipulating various geometric quantities in Riemannian geometry.

\smallskip 

Section 4 is dedicated to a fully detailed proof of the higher-dimensional Gauss--Bonnet Formula for compact, even-dimensional Riemannian manifolds with boundaries, following Chern's arguments. 
We also reinterpret the boundary term using double forms; the formula is stated in three different formulations  in \eqref{NewGBC}. Some examples and applications are discussed  in Section 5.

\smallskip

Section 6 contains various remarks and comments, and a brief summary on the notion of double factorial that is often used throughout the paper. Some exercises are suggested in Section 7; the reader is advised to take a look at them at some point. The paper concludes with an appendix that offers the necessary background in Riemannian geometry, including a detailed introduction to Cartan's moving frame methods.

As for prerequisites, we assume the reader is familiar  with basic Differential and Riemannian geometry at the graduate level, including tensor fields and differential forms. Although we have tried to write a clean exposé, the reader should be aware that a number of algebraic calculations are quite intricate, and some of the details are  left to the reader. 

\smallskip

Let us conclude this introduction with a word about our notation. The manifolds we work with are assumed to be smooth and oriented. Typically, they are denoted as $M$ or $N$, where $m = \dim(M)$ and $n = \dim(N)$. Throughout most of the discussion, $m$ will be presumed to be an even number, while $n$ may be either odd or even. 
The boundary $\partial M$ of the manifold $M$, if non empty, will be given the orientation induced by the outer normal. 
 

\section{An  Overview of Hopf's Problem on the Curvatura Integra}
\label{sec.hopf}

The 2-dimensional Gauss--Bonnet Formula, for a compact Riemannian surface $(S,g)$  with piecewise smooth boundary, is conventionally stated as follows:
$$
  2\pi \chi(S) = \int_S KdA + \int_{\partial S} k_g ds + \sum_{j} \alpha_j,
$$
where $K$ represents the curvature, $k_g$ denotes the geodesic curvature of its boundary,  $\alpha_j$ refers to the exterior angles on the boundary and $\chi(S)$ is the Euler characteristic of the surface. 

\medskip 

This formula  has been extended to the case of  higher-dimensional manifolds in a series of papers published between 1925 and 1945 by H.  Hopf, W. Fenchel, C. Allendoerfer, A. Weil and S. S.  Chern, as we now discuss. 
The first and fundamental breakthrough is due to Hopf, who proved  in 1925 that for a closed hypersurface $M$ in Euclidean space $\R^{m+1}$
the following holds if $m$ is even:\footnote{No such formula can hold if $m$ is odd, since in this case we always have $\chi(M) =0$. See however Corollary \ref{cor.normaldegree2}.}
\begin{equation}\label{GBHopf}
   \frac{1}{2} \Vol(S^m) \chi(M) = \int_M K\dvol_M. 
\end{equation}
Here $K$ is  the \textit{Gauss--Kronecker curvature}  \index{Gauss--Kronecker curvature}   
of the hypersurface, that is, the product of its principal curvatures. We shall call Equation \eqref{GBHopf} the \textit{Gauss--Bonnet--Hopf} Formula;  its right hand side is the \textit{total curvature}, or \textit{curvatura integra},  of the hypersurface.  \index{Total curvature (curvatura integra)}

\medskip 

To prove the Formula, Hopf generalized a result of H. Poincaré that relates the winding number of a vector field on the boundary of a planar  domain to the indices of its singularities. We recall the definition: given a continuous  vector field $X$ on a smooth manifold $M$ having an isolated zero at $p\in M$, and choosing a  local coordinate system in the neighborhood $U$ of $p$, one may identify $X$ with a mapping
$X : U \to\mathbb{R}^m$; for $\varepsilon > 0$ small enough, the following map is   well defined: 
$$
 \zeta : \partial B(p,\varepsilon) \to S^{m-1}, \quad  \zeta (q) = \frac{X_q}{\|X_q\|},
$$
where $B(p,\varepsilon) \subset M$ is the ball around $p$ with respect to some Riemannian metric on $M$.
The \emph{index of $X$} at the point $p$ is defined to be the degree of that map:
\[
 \Ind(X,p) = \deg\left(\left.\zeta\right|_{\partial B(p,\varepsilon)} \right).
\]
The index  depends only on the vector field and not on the chosen coordinate system or the auxiliary Riemannian metric. With  this in mind, we state the Poincaré--Hopf Theorem: 

\smallskip

\begin{theorem}[Poincaré--Hopf]\label{Theorem_Poincare_Hopf}   \index{Poincaré--Hopf's Theorem} 
Let $M$ be a smooth compact manifold with (possibly empty) boundary. Let  $X$ be a vector field  on $M$ that   is transverse to the boundary and outward pointing. If $X$ has isolated zeroes at $x_1, \ldots, x_k \in M\setminus \partial M$,
then the following equality holds:
\[
\sum_{i=1}^k \mathrm{Ind}(X,x_i) = \chi(M),
\]
where $\mathrm{Ind}(X,x_i)$ represents the index of $X$ at the zero $x_i$, and $\chi(M)$ denotes the Euler characteristic of $M$.
\end{theorem}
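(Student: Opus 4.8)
The plan is to reduce the statement to one well-understood vector field in two steps: (i) show that the index sum $\sum_{i}\Ind(X,x_i)$ is the same for every admissible field $X$ (that is, transverse to $\partial M$ and outward pointing), and (ii) evaluate it for one convenient $X$ and recognize $\chi(M)$.

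For step (i) I would embed $M$ in a Euclidean space $\R^N$ (Whitney), fix a tubular neighborhood $U$ of $M$ with smooth retraction $r\colon U\to M$, and extend $X$ to $U$ by $\tilde X(z)=X(r(z))+(z-r(z))$, whose normal summand $z-r(z)$ vanishes exactly on $M$. Then $\tilde X$ has the same zeros as $X$ with the same indices, it points strictly outward along the boundary $\{\,\mathrm{dist}(\cdot,M)=\varepsilon\,\}$ of the $\varepsilon$-tube about $M$, and the outward-pointing hypothesis on $X$ along $\partial M$ lets one also control $\tilde X$ near the part of that boundary lying over $\partial M$. By additivity of the topological degree, $\sum_{i}\Ind(X,x_i)$ equals the degree of $\tilde X/\|\tilde X\|$ on the boundary of a small neighborhood of $\{x_1,\dots,x_k\}$; enlarging that neighborhood --- across a region where $\tilde X$ does not vanish --- up to a full tube around $M$, this degree turns into the degree of the outward Gauss map of the tube, a quantity defined without any reference to $X$. (If one prefers to avoid embeddings, an equivalent route interpolates two admissible fields over $M\times[0,1]$ and invokes homotopy invariance of degree, at the cost of smoothing the corner $\partial M\times[0,1]$.)

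For step (ii) I would take a Morse function $f\colon M\to[0,1]$ with $f^{-1}(1)=\partial M$ a regular level set; such functions exist, and for them $\grad f$ is outward pointing along $\partial M$ while $f$ has only interior critical points. At a critical point of Morse index $j$, in Morse coordinates $\grad f$ is a positive multiple of a linear vector field with exactly $j$ negative eigenvalues, so its local index is the sign of its determinant, namely $(-1)^j$. Hence, by step (i), $\sum_{i}\Ind(X,x_i)=\sum_{j}(-1)^j c_j$, where $c_j$ is the number of critical points of Morse index $j$; and since $M$ has the homotopy type of a CW complex with $c_j$ cells in dimension $j$, this alternating sum equals $\chi(M)$. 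Equivalently, one could replace the Morse function by a smooth triangulation of $M$ and use Hopf's explicit vector field having one zero of index $(-1)^{\dim\sigma}$ at the barycenter of each simplex $\sigma$, recovering $\sum_{\sigma}(-1)^{\dim\sigma}=\chi(M)$ straight from the definition of the Euler characteristic.

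The crux is step (i): one must rigorously establish that the index sum is a cobordism-type invariant of the zero locus, insensitive both to $X$ and to the auxiliary data (the embedding, the tube radius), while correctly accommodating the outward-pointing condition --- it is precisely this condition that yields $\chi(M)$ and not some other combination of $\chi(M)$ and $\chi(\partial M)$; an inward-pointing field would instead give $(-1)^{\dim M}\chi(M)$. Once step (i) is in place, the evaluation in step (ii) via a Morse function or a triangulation is routine.
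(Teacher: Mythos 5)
The paper does not actually prove Theorem \ref{Theorem_Poincare_Hopf}; it is quoted as a known result with references to Hopf's papers and to Milnor's book, and your argument is in essence exactly the classical proof found there: embed $M$, extend $X$ to an $\varepsilon$-tube so that the index sum becomes the degree of the Gauss map of the tube boundary (hence independent of $X$), then evaluate on the gradient of a Morse function with $\partial M$ as top regular level (or on Hopf's triangulation field). Your sketch is sound, including the correct treatment of the outward-pointing condition over $\partial M$ and the flagged technicalities (index preservation under the extension $\tilde X$, smoothing/low regularity of the tube boundary over $\partial M$), so there is nothing to add beyond noting that it coincides with the cited proof rather than offering an alternative to anything in the paper.
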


Hopf proved this result in \cite{Hopf1925,Hopf1926}, mentioning Poincaré for the 2-dimensional case and earlier partial results by  Brouwer and Hadamard in higher-dimensions. A very readable proof  can be found in \cite[page 35]{Milnor}. See also references  \cite{Bredon,GuilleminPollack,Madsen}  for the case of manifolds without boundary. 

\medskip

\begin{corollary}\label{cor.normaldegree}
Let $D$ be a smooth compact domain in $\mathbb{R}^m$.  Denote by $\nu : \partial D \to S^{m-1}$  the exterior pointing  Gauss map of the boundary. The degree of that map is equal to the Euler characteristic of the domain $D$, that is, 
$$
 \deg(\nu) =   \chi(D).
$$
\end{corollary}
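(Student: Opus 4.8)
The plan is to deduce Corollary \ref{cor.normaldegree} from the Poincar\'e--Hopf Theorem by constructing a suitable vector field on $D$ whose boundary Gauss map is precisely $\nu$. The natural candidate is the gradient of a function that plays the role of a ``height'' or ``distance-squared'' function, but the cleanest choice is to use a generic linear function on $\R^m$ and perturb, or better, to directly take advantage of the Euclidean structure. Concretely, first I would observe that any smooth vector field $X$ on $D\subset\R^m$ that is transverse to $\partial D$ and outward pointing restricts on $\partial D$ to a nowhere-zero section of $T\R^m|_{\partial D}\cong\partial D\times\R^m$, and after normalization gives a map $\partial D\to S^{m-1}$; the key classical fact (a consequence of the very construction of the degree via the Poincar\'e--Hopf argument, e.g. \cite[page 35]{Milnor}) is that the degree of this normalized boundary map equals $\sum_i\Ind(X,x_i)$, which by Theorem \ref{Theorem_Poincare_Hopf} equals $\chi(D)$.

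Next I would exhibit one explicit such $X$ for which the normalized boundary map is exactly the Gauss map $\nu$. Take $X$ to be any extension to $D$ of the outward unit normal field along $\partial D$ that is outward pointing and transverse near the boundary --- for instance, in a collar neighborhood $\partial D\times[0,\varepsilon)$ parametrized by the inward normal exponential flow, let $X$ be (a smooth cutoff of) the coordinate vector field pointing outward, and extend arbitrarily but smoothly with isolated zeros into the interior; genericity (Sard) lets us assume the zeros are isolated. By construction $X|_{\partial D}$ is a positive multiple of the outward normal, so its normalization is $\nu$. Combining the two steps: $\deg(\nu)=\deg\bigl(X/\|X\|\,|_{\partial D}\bigr)=\sum_i\Ind(X,x_i)=\chi(D)$.

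Alternatively, and perhaps more transparently, one can avoid the collar construction: pick a smooth function $f:\R^m\to\R$ which is a ``defining function'' for $D$ near $\partial D$ (so $D=\{f\le 0\}$, $df\neq 0$ on $\partial D$) and which is Morse on $D$; then $X=\grad f$ points outward along $\partial D$ (since $\partial D$ is a regular level set and $f$ increases outward), is transverse there, and has isolated nondegenerate zeros. Its normalized boundary map is $\grad f/\|\grad f\|=\nu$ on $\partial D$, and Poincar\'e--Hopf gives $\sum_i\Ind(\grad f,x_i)=\chi(D)$, whence $\deg(\nu)=\chi(D)$. The main obstacle is really a bookkeeping one: making rigorous the assertion that the degree of the normalized boundary restriction of $X$ equals $\sum_i\Ind(X,x_i)$ --- this is the content built into the standard proof of Poincar\'e--Hopf (shrink the zeros, the boundary of the complement of small balls around the $x_i$ is homologous to $\partial D$, and $X/\|X\|$ is defined on that complement), so if one is willing to cite that proof it is immediate; if not, it must be re-derived, which is the only nontrivial point.
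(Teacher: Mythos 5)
Your proposal is correct and is essentially the paper's own argument: both rest on the Poincaré--Hopf Theorem together with the identification of the degree of the normalized boundary restriction of $X$ with $\sum_i\Ind(X,x_i)$, which the paper establishes explicitly by pulling back the closed volume form of $S^{m-1}$ and applying Stokes on $D$ minus small balls around the zeros --- precisely the derivation you sketch as the ``only nontrivial point.'' The sole (cosmetic) difference is that the paper keeps $X$ an arbitrary outward-pointing field and then notes that $X/\|X\|$ restricted to $\partial D$ is homotopic to the Gauss map $\nu$ (they are never antipodal), whereas you arrange $X|_{\partial D}$ to be a positive multiple of the outward normal so that no homotopy is needed; both choices settle the same step.
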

We will give later  a more general version of this result, see Corollary \ref{cor.normaldegree2}.

\medskip 

\begin{proof}
Let us  choose a vector field $X$ on $D$ with isolated zeroes at $x_1, \dots, x_k \in D\setminus \partial D$ and such that $X$ is outward pointing at the boundary. 
Let us set   $D_{\varepsilon} = D \setminus \cup_{i=1}^k B(x_i, \varepsilon)\subset M$ for some $\varepsilon >0 $ small enough.
We  denote by  $\omega$   the volume form on $S^{m-1}$ and  by $f : D_{\varepsilon} \to S^{m-1}$ the map defined by $f(q) = X_q/\|X_q\|$. Since  $\omega$  is a closed form, we have
$$
 0 = \int_{D_{\varepsilon}} f^*(d\omega) =    \int_{\partial D_{\varepsilon}} f^*(\omega)
 = \int_{\partial D} f^*(\omega) - \sum_{i=1}^k \int_{\partial B(x_i, \varepsilon)} f^*(\omega).
$$
To complete the proof,  observe that 
$$
  \int_{\partial D} f^*(\omega) = \Vol(S^{m-1}) \deg(\left. f \right|_{\partial D})   \quad \text{and} \quad \sum_{i=1}^k \int_{\partial B(x_i, \varepsilon)} f^*(\omega) 
  = \Vol(S^{m-1})\sum_{i=1}^k \Ind(X,x_i).
$$
Moreover $\deg(\left. f \right|_{\partial D}) = \deg(\nu)$ because $\left. f \right|_{\partial D}$ is homotopic to the Gauss map.
\\
\end{proof}

\medskip 

We are now in position  to prove Hopf's formula \eqref{GBHopf}. Recall first that,   by the  Jordan–Brouwer separation  Theorem,  any closed  smooth  hypersurface $M \subset \mathbb{R}^{m+1}$ is the boundary of a compact smooth domain  with   
$D \subset \mathbb{R}^{m+1}$, see e.g. \cite[page 89]{GuilleminPollack}. By definition, the Gauss--Kronecker curvature $K$ of $M$ is the Jacobian of the Gauss map $\nu : M \to S^m$, i.e. we have
$$
   K \dvol_{M} = \nu^*\left(\omega\right),
$$
where $\omega = \dvol_{S^n}$ is the volume form of $S^m$. Using the previous corollary, we therefore have 
$$
 \int_M K \dvol_{M}  =   \deg(\nu) \int_{S^m} \omega = \chi (D)  \Vol(S^m),
$$
and  \eqref{GBHopf} follows now from the relation
$$
 \chi(M) = \chi (\partial D) = \frac{1}{2} \chi (D).
$$
\qed 

\medskip 

In a second paper \cite{Hopf1925a}, also published in 1925, Hopf established the validity of \eqref{GBHopf} for compact space forms, that is,  Riemannian manifolds with constant sectional curvature. A proof will also be given later in   \S \ref{sec.spaceform}.  Motivated by  these results, he asked  for an  intrinsic version of that result. 
Let us quote from  Allendoerfer's paper \cite{Allendoerfer}:

\begin{quotation}
\fontfamily{libertine}\selectfont 
\begin{minipage}{0.8\textwidth} 
{\small 
 H. Hopf has repeatedly pointed to the problem of deciding whether one can define a curvature scalar in an arbitrary Riemannian manifold of even dimension with the following properties: 
\begin{enumerate}[\   $\circ$]
\item In the special case of a hypersurface it becomes identical with the Gauss--Kronecker curvature mentioned above.
\item It  satisfies a formula of Gauss--Bonnet type, i.e.. its integral over a region of the manifold can be expressed by a curvature integral over the boundary.
\item The integral of this  curvature scalar is a topological invariant [in the case of a closed manifold].
\end{enumerate}
}
\end{minipage}
\end{quotation}

\medskip  

In short, Hopf is asking for the definition of a locally defined,  scalar function on the manifold, depending only on the metric tensor and 
whose integral is the Euler characteristic (plus a boundary term if $\partial M \neq \emptyset$). 
We will call this the \textit{Hopf Problem on Curvature Integra}.  \index{Hopf Problem on Curvatura Integra} 
A first observation is that, using Gauss' Equation relating the second fundamental form of a hypersurface $M$ in some Euclidean space to its curvature tensor:
$$
  R_{ijkl} = h_{ik}h_{jl}-h_{il}h_{jk}, 
$$
one can express the Gauss--Kronecker curvature $K(p)$ at any  point $p$ in $M$ as follows
\begin{equation} \label{RGKcurvature}
  K  = \frac{1}{m!!}   \sum_{\sigma\in S_{m}}   \sum_{\tau\in S_{m}} (-1)^{\sigma}(-1)^{\tau}
  R(e_{\sigma_1}, e_{\sigma_2},e_{\tau_1},e_{\tau_2})  \cdots  R(e_{\sigma_{m-1}}, e_{\sigma_m},e_{\tau_{m-1}},e_{\tau_m}),
\end{equation}
where $\{e_1, \dots, e_m\}$ is an orthonormal basis of the tangent space of $M$ at the point $p$. 
Here  $S_m$ is the permutation group on $m$ symbols and $(-1)^{\sigma}$ is the signature of  $\sigma \in S_m$. 
In the above formula, we have used the double factorial, which for an even number $m$ is defined as
$$
  m !! = m \cdot (m-2) \cdots  2 = 2^{ \frac{m}{2}}  \left(\tfrac{m}{2} \right)!
$$
we refer to \S  \ref{sec.volumeSd} for more on this notion. 
A proof  of \eqref{RGKcurvature} can be found on page 228 of the book \cite{Thorpe1979} by J.A. Thorpe, but another proof will be given in Proposition \ref{GK_intrinseque} below.
It is  obvious from \eqref{RGKcurvature} that the Gauss--Kronecker curvature  is intrinsic. 
 
\medskip 

The Hopf problem stated above has been   positively resolved in two papers,  \cite{Allendoerfer} and \cite{Fenchel},  published   independently in 1940,  by C. B. Allendoerfer  in the U.S.A, and W. Fenchel in Denmark. They proved that \eqref{RGKcurvature} holds for any  closed even-dimensional manifold  $M$ that can be isometrically embedded in some Euclidean space $\R^d$. Note that the embeddability condition is in fact not restrictive as was proved 16 years later by J. Nash in \cite{Nash1956}. 

\smallskip 

In both proofs, the authors consider the boundary of the  $\varepsilon$-tubular neighborhood of $M$, with $\varepsilon>0$ small enough; let us denote it by $N$,  and observe that it is a hypersurface in $\R^d$. Moreover,  the natural projection $\pi : N \to M$ is a fiber bundle with typical fiber the sphere $S^{d-1-m}$. 

\smallskip 

The scalar $K_p$, defined at any point $p$ of $M$  by integrating the Gauss--Kronecker curvature of  the hypersurface $N$ over the $\varepsilon$-sphere  $\pi^{-1}(p) \subset N$, is the generalized curvature function answering Hopf's Problem. This is verified by applying \eqref{GBHopf} to the hypersurface $N$ and computing.  Fenchel mentions that R.  Lipschitz and W. Killing already proved in the 1880s that $K_p$ depends solely on the Riemannian metric of $M$ and not on its embedding in Euclidean space. Alternatively, Allendoerfer used calculations from H. Weyl's 1938 paper on the tube formula \cite{Weyl} to establish that \eqref{RGKcurvature} still holds for the averaged Gauss--Kronecker curvature as defined.

\smallskip 

In short, the Hopf Problem for a closed submanifold $M \subset \R^d$ is resolved by applying the Gauss--Bonnet--Hopf Formula \eqref{GBHopf}  to a tubular neighborhood $N$ of the manifold, then integrating the Gauss--Kronecker curvature over the fibers of the fibration  $\pi : N \to M$, and proving that the resulting function is an intrinsic quantity attached to the Riemannian manifold.

\medskip 

The two aforementioned papers did not provide a complete solution to the Hopf Problem as stated earlier,  since the second requirement explicitly called for a version of the Gauss--Bonnet formula applicable to manifolds with boundary.
Also the condition that the manifold should be isometrically embedded in some Euclidean space was seen as a serious restriction (Nash's Theorem not being available at that time). The  joint paper \cite{AW} published in 1943 by Carl B. Allendoerfer and André Weil addresses both issues with the following method:  They consider a sufficiently fine triangulation of the manifold and use a result proved by  E. Cartan in 1927 that guarantees the possibility to \textit{locally} embed a Riemannian manifold isometrically into some $\R^d$, provided it is analytic.  Using Cartan's Theorem, one can isometrically embed each cell of the triangulation  in  some Euclidean space $\R^d$, then consider a tubular neighborhood  of that cell and repeat the previous construction. The result is a Gauss--Bonnet Formula, first for piecewise analytic  Riemannian polyhedra, and then, invoking an approximability result by Whitney,  for general  Riemannian polyhedra of class $\Ct^2$.

\medskip 

The result achieved by Allendoerfer and Weil indeed solved Hopf's problem, but it was not entirely satisfactory. The proof involved a high level of complexity and, furthermore, it relied on local embeddings in Euclidean space, which is clearly not the most natural approach for constructing an intrinsically defined generalized curvature function on a given Riemannian manifold. 

\bigskip 

According to the story, as told say in \cite{Wu2008}, Chern visited the Institute for Advanced Study in Princeton in August 1943. He was 31 years old. There,  he met André Weil, who discussed with him the Hopf problem and the need for an intrinsic proof. Within two weeks, Chern solved the problem using the moving frame method that had been recently introduced by Élie Cartan, and some innovative  ideas of his own. He published his proof in two papers in the Annals of Mathematics in 1944 and 1945. The first paper, titled \,  \textit{A Simple Intrinsic Proof of the Gauss--Bonnet Formula for Closed Riemannian Manifolds}, contains a short intrinsic proof of the Gauss--Bonnet Formula for closed even-dimensional Riemannian manifolds. The second paper, titled  \ \textit{On the Curvatura Integra in a Riemannian Manifold}, extends the investigation to manifolds of either odd or even dimension with boundary. 
The 1945 paper  also gives  additional formulas  expressing some characteristic classes of sphere bundles using  curvature integrals. Chern's work laid the foundation for the development of Chern--Weil theory, which relates characteristic classes to differential forms.

 
\section{Double Forms and their Geometric Applications}\label{Subsection_Algebra_of_Double Forms}

The notion of \textit{double form} \index{Double Form} on a manifold $N$ has been introduced by G. de Rham  who defined them as differential forms on a manifold $N$ with values in the Grassmann algebra of $TN$, see \cite[\S 7]{deRham}. Double forms have  been used in the 1960s and 70s as a convenient tool in the computation of various curvature formulae in Riemannian  geometry  by R. Kulkarni,  J. Thorpe and A. Gray.  In this section we introduce the algebra of double forms following the book  \cite{Gray} by Gray and we refer to \cite{Labbi} for further developments.

\medskip 

\subsection{Definition and examples}
 
\begin{enumerate}[(i)]
\item  Let $N$ be a smooth $n$-dimensional manifold.  A  \emph{double-form}  \index{double forms}
$A$ of type $(k,l)$ on $N$ is a smooth covariant tensor field  of degree $k+l$, 
which is alternating  in the first $k$ variables and also in the last $l$ variables. This condition can be expressed  as follows:
If $X_1,  \dots, X_k, Y_1, \dots, Y_l$ are $(k+l)$ vector fields on $N$ and $\sigma, \tau \in S_k$, then
$$
  A(X_1,  \dots, X_k; Y_1, \dots, Y_l) =  
  (-1)^{\sigma}(-1)^{\tau}A(X_{\sigma_1},  \dots, X_{\sigma_k}; Y_{\tau_1}, \dots, Y_{\tau_l}). 
$$

We denote by $\mathcal{D}^{k,l}(N)$ the  set of all $(k,l)$-double forms on $N$. This  is a module over the algebra  of smooth functions $C^{\infty}(N)$ that can be expressed as the tensor product 
\[
 \mathcal{D}^{k,l}(N) = \mathcal{A}^k(N) \otimes_{C^{\infty}(N)} \mathcal{A}^l(N),
\]
where $\mathcal{A}^k(N)$ is the usual space of   differential $k$-forms on $N$. The direct sum of these spaces is denoted as 
$$
 \mathcal{D}(N)  = \bigoplus_{k,l=0}^n  \mathcal{D}^{k,l}(N).
$$

\item A natural product,  denoted by  $\owedge$,  is defined on  $\mathcal{D}(N)$ as follows:   If $\psi_1=\alpha_1\otimes\beta_1 \in \mathcal{D}^{k,l}(N)$ and $\psi_2=\alpha_2\otimes\beta_2 \in \mathcal{D}^{p,q}(N)$ then
 $\psi_1 \owedge \psi_2$ is the following double-form of type $(k+p,l+q)$:
\[
 \psi_1\owedge\psi_2 = (\alpha_1\wedge\alpha_2)\otimes (\beta_1\wedge\beta_2) \in \mathcal{D}^{k+p,l+q}(N).
\]
The product is then extended to general elements of  $\mathcal{D}(N)$ by linearity.; it can also be defined by the formula:
\begin{align*}
&\left(\psi_1 \owedge \psi_2\right) (X_1,  \dots, X_{k+p}; Y_1, \dots, Y_{l+q})   =  
\\  &   
\frac{1}{k! \,  l! \, p!\, q!}   \sum_{\sigma\in S_{k+p}} \sum_{\tau\in S_{l+q}} (-1)^{\sigma}(-1)^{\tau} 
  \psi_1(X_{\sigma_1},  \dots, X_{\sigma_k}; Y_{\tau_1}, \dots, Y_{\tau_l}) 
\cdot 
 \psi_2(X_{\sigma_{k+1}},  \dots, X_{\sigma_{k+p}}; Y_{\tau_{l+1}}, \dots, Y_{\tau_{l+q}}) 
\end{align*}
We shall  call this operation the \textit{double wedge product}  \index{double wedge product}    of $\psi_1$ and $\psi_2$.

\item The pair $\left(\mathcal{D}(N), \owedge\right)$  is the  \emph{algebra of double-forms} on the manifold $N$, it is a  supercommutative  bigraded algebra, meaning that if $\psi_1 \in \mathcal{D}^{p,q}(N)$ and $\psi_2 \in \mathcal{D}^{r,s}(N)$, then $ \psi_1 \owedge \psi_2 \in  \mathcal{D}^{k+r,l+s}(N)$ and
$$
 \psi_1 \owedge \psi_2 = (-1)^{pr+qs}\psi_2 \owedge \psi_1.
$$
\end{enumerate}

\medskip 

\begin{remark} \rm 
The product in the algebra $\mathcal{D}(N)$ is simply denoted as $\psi_1\cdot\psi_2$ by various authors such as in  \cite{Kulkarni} and  \cite{Labbi}, and it is denoted by   $\psi_1\wedge\psi_2$ in the book \cite{Gray}.  
The notation $\owedge$ is used in \cite{Besse} for symmetric double forms of type $(1,1)$. We find it convenient to extend this notation  to the whole  algebra of double forms.
\end{remark}


\bigskip 

\textbf{Examples.} 
\begin{enumerate}[(a)]
\item The double forms of type $(0,0)$ are the smooth functions on $N$, that is, $\mathcal{D}^{0,0}(N) = C^{\infty}(N)$.
\item To any differential $k$-form $\alpha \in \mathcal{A}^k(N)$ we can associate the double forms $1\owedge \alpha \in \mathcal{D}^{0,1}(N)$
and $\alpha \owedge 1\in \mathcal{D}^{1,1}(N)$. This gives two natural embeddings  
of the exterior algebra into the algebra of double forms.
\item The double forms of type $(1,1)$ are the covariant tensor  fields of order $2$ on $M$. In particular the 
metric tensor $g$ of a Riemannian manifold is an element in $\mathcal{D}^{1,1}(N)$. 
\item If $N$ is a (cooriented) hypersurface in a Riemannian manifold $(M,g)$, then the second fundamental form $h$ of $N\subset M$ is another important  double forms of type $(1,1)$. 
\item The curvature tensor $R$ of a Riemannian manifold is a double form of type $(2,2)$. It is furthermore symmetric in the following sense:
$$
  R(X_1,X_2:Y_1,Y_2) =   R(Y_1,Y_2;X_1,X_2).
$$
\item The double wedge product of two symmetric  $(0,2)$ tensors fields $h',h'' \in \mathcal{D}^{1,1}(N)$ is usually called their  \emph{Kulkarni--Nomizu product}. It is explicitly given by
\begin{multline*}
 h_{1}  \owedge   h_{2} (X_{1},X_{2};Y_{1},Y_{2}) =  h_{1}(X_{1},Y_{1})h_{2}(X_{2},Y_{2}) -   h_{1}(X_{2},Y_{1})h_{2}(X_{1},Y_{2}) \\  + h_{1}(X_{2},Y_{2})h_{2}(X_{1},Y_{1}) - h_{1}(X_{1},Y_{2})h_{2}(X_{2},Y_{1}).
\end{multline*}

\item A covariant tensor field $S$ of degree $3$ can be seen as an element in $\mathcal{D}^{2,1}(N)$ if and only if it is skew symmetric in its first two variables: $S(X,Y;Z) + S(Y,X;Z) = 0$.
\item Given $S \in \mathcal{D}^{2,1}(N)$ and a $1$-form $\alpha$ seen as an element in $\mathcal{D}^{0,1}(N)$, the product $S \owedge \alpha \in \mathcal{D}^{2,2}(N)$ is given by
$$
 S \owedge \alpha (X,Y;Z,W) = S(X,Y;Z) \alpha (W) - S (X,Y;W)\alpha (Z).
$$
\item A smooth field $A\in \mathrm{End}(\Lambda^k TN)$ of endomorphisms of  the exterior power $\Lambda^kTN$ can be seen as the double form in $\mathcal{D}^{k,k}(N)$  defined by
$$
(X_1, \dots X_k ; Y_1, \dots Y_k) \mapsto \left\langle  X_1 \wedge X_2, \wedge \dots \wedge  X_k ,  A(Y_1 \wedge Y_2, \wedge \dots \wedge  Y_k)\right\rangle,
$$
where we have used the natural  scalar product $\langle \; , \; \rangle$ on $\Lambda^k TN$ induced by $g$.
\end{enumerate}

 \subsection{Transposition and Symmetric Double Forms}

The \textit{transpose} of a double form $\psi \in \mathcal{D}^{k,l}(N)$ is the double form $\psi^{\top} \in \mathcal{D}^{l,k}(N)$ defined by 
\[
   \psi^{\top}(X_1, \dots, X_l; Y_1, \dots, Y_k) = \psi(Y_1, \dots, Y_k; X_1, \dots, X_l).
\]
We trivially have  $({\psi^{\top}})^{\top} = \psi$, hence  transposition defines an involution of the  algebra of double forms Furthermore we have 
\[
  (\psi_1 \owedge \psi_2)^{\top} = \psi_1^{\top} \owedge \psi_2^{\top}.
\]
A double form $\psi \in \mathcal{D}^{k,k}(N)$ is termed \textit{symmetric} if $\psi^{\top} = \psi$. Note that the product of two symmetric double forms is itself a symmetric double form.

\medskip

The symmetric elements of the double algebra $\mathcal{D}(N)$ form a commutative subalgebra, denoted by Kulkarni as $\mathcal{C}$. He refers to it as the \textit{Ring of Curvature Structures}. Moreover, he considers sseveral  subalgebras of $\mathcal{C}$, characterized by the  Bianchi identities.

\subsection{The Contraction Operator}\label{Subsection_Contraction_Operators}

Given a  Riemannian metric $g$ on the manifold  $N$, and $1 \leq k,l \leq n = \dim(N)$, we inductively define the  \emph{contraction operator of order $p$:} 
\index{contraction (of a double form)}
$$
 \Ct^p :  \mathcal{D}^{k,l}(N) \to \mathcal{D}^{k-p,l-p}(N)
$$
for any integer $p$ such that   $0 \leq p \leq \min(k,l)$ 
as follows. For $\psi \in \mathcal{D}^{k,l}(N)$ we set $\Ct^0(\psi)=\psi$ and at any point $x\in N$ we set 
\[
 \Ct^p(\psi)(X_1,\ldots,X_{k-p};Y_1,\ldots,Y_{l-p}) = \sum_{i=1}^n\Ct^{p-1}(\psi)(X_1,\ldots,X_{k-p},e_i;Y_1,\ldots,Y_{l-p},e_i),
\]
for $ 1 \leq p \leq \min\{l,k\}$, where $e_1,\ldots,e_n \in T_xM$ is an   orthonormal basis and $X_i, Y_j \in T_xM$ are arbitrary tangent vectors. This operation is independent of the chosen orthonormal basis. 

\medskip

As an example, the first and second contractions of the curvature tensor $R$ of  a Riemannian manifold  are respectively its  Ricci and  scalar curvatures. 

\medskip

If $\psi \in  \mathcal{D}^{p,p}(N)$, its \textit{full  contraction} is defined to be $\Ct^p(\psi)$. This is a scalar quantity (i.e. an element of $C^{\infty}(M)$) and the following result is useful in computations:

\begin{proposition} \label{prop.contractnform}
Let $\{e_1, \dots, e_n\}$ be a positively oriented orthonormal moving frame defined in a domain $U$ of an oriented  $n$-dimensional Riemannian manifold  $(N,g)$, and let $\{\theta^1, \dots \theta^n\}$ be the dual coframe. 
Then the following holds: If  $\mu_1,\ldots,\mu_{p}$ and $\nu_1,\ldots,\nu_{p}$ are two lists of pairwise distinct indices in $\{1, \ldots, m\}$, then
\begin{align*}
\frac{1}{p!} \; \Ct^{p}\left(\theta^{\mu_1}\wedge \cdots \wedge\theta^{\mu_{p}} \otimes   \theta^{\nu_1}\wedge \cdots\wedge\theta^{\nu_{p}} \right) 
&=  \delta_{\mu_1,\ldots,\mu_{p}}^{\nu_1,\ldots,\nu_{p}}
\end{align*}
is equal to $+1$ or $-1$,  depending on whether ${\nu_1,\ldots,\nu_{p}}$ is an odd or even permutation of ${\mu_1,\ldots,\mu_{p}}$, and equal to zero otherwise.
\end{proposition}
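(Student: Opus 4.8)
The plan is to evaluate the full contraction directly from the inductive definition of $\Ct^p$, treating the wedge products as alternating sums over permutations and carefully bookkeeping which terms survive the contraction sums. First I would recall that, as a double form of type $(p,p)$, the tensor $\psi = \theta^{\mu_1}\wedge\cdots\wedge\theta^{\mu_p} \otimes \theta^{\nu_1}\wedge\cdots\wedge\theta^{\nu_p}$ acts on a pair of $p$-tuples of vectors by
$$
\psi(X_1,\dots,X_p;Y_1,\dots,Y_p) = \left(\sum_{\sigma\in S_p}(-1)^\sigma \prod_{i=1}^p \theta^{\mu_i}(X_{\sigma_i})\right)\left(\sum_{\tau\in S_p}(-1)^\tau \prod_{j=1}^p \theta^{\nu_j}(Y_{\tau_j})\right),
$$
using the convention for $\wedge$ consistent with the $\owedge$ formula in the text. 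The full contraction $\Ct^p(\psi)$ is obtained by summing $\psi(e_{i_1},\dots,e_{i_p};e_{i_1},\dots,e_{i_p})$ over all $(i_1,\dots,i_p)\in\{1,\dots,n\}^p$; this follows from unwinding the inductive definition, since at each stage one inserts the same basis vector $e_i$ into the $k$-slot and the $l$-slot and sums over $i$.

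Next I would analyze which multi-indices $(i_1,\dots,i_p)$ contribute. Since $\theta^{\mu_a}(e_{i_b}) = \delta_{\mu_a i_b}$, the first factor is nonzero only when $\{i_1,\dots,i_p\} = \{\mu_1,\dots,\mu_p\}$ as sets, i.e. when $(i_1,\dots,i_p)$ is a permutation of $(\mu_1,\dots,\mu_p)$; write $i_b = \mu_{\rho(b)}$ for a unique $\rho\in S_p$. In that case the first factor equals $(-1)^\rho$ (only $\sigma = \rho^{-1}$, appropriately, survives). For the same multi-index the second factor is nonzero only if $\{i_1,\dots,i_p\}=\{\nu_1,\dots,\nu_p\}$ as well — so the whole expression vanishes unless the set $\{\mu_1,\dots,\mu_p\}$ equals the set $\{\nu_1,\dots,\nu_p\}$, which matches the claimed "zero otherwise." When the two index sets do coincide, let $\pi\in S_p$ be the permutation with $\nu_b = \mu_{\pi(b)}$; then for the surviving multi-index the second factor contributes a sign, and summing over all $p!$ choices of $\rho$ I expect each term to yield the same value $(-1)^{\pi}$, giving $\Ct^p(\psi) = p!\,(-1)^{\pi} = p!\,\delta^{\nu_1,\dots,\nu_p}_{\mu_1,\dots,\mu_p}$, which after dividing by $p!$ is exactly the claim. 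I would also double-check the edge normalization against the case $p=1$ (where $\Ct^1(\theta^\mu\otimes\theta^\nu) = \sum_i \delta_{\mu i}\delta_{\nu i} = \delta_{\mu\nu}$) and the case $p=2$ to make sure no stray factor of $p!$ or sign convention has crept in.

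The main obstacle I anticipate is purely combinatorial bookkeeping: reconciling the normalization factor $1/(k!\,l!\,p!\,q!)$ built into the definition of $\owedge$ with the repeated insertion of basis vectors in the inductive contraction, and making sure the signs $(-1)^\rho$, $(-1)^\pi$ and the signs coming from the two alternating sums combine into a single sign $\delta^{\nu_1,\dots,\nu_p}_{\mu_1,\dots,\mu_p}$ rather than its inverse or negative. A clean way to sidestep some of this is to first establish the formula when $(\nu_1,\dots,\nu_p) = (\mu_1,\dots,\mu_p)$ — showing $\frac{1}{p!}\Ct^p(\theta^{\mu_1}\wedge\cdots\wedge\theta^{\mu_p}\otimes\theta^{\mu_1}\wedge\cdots\wedge\theta^{\mu_p}) = 1$ — and then use the fact that permuting the $\nu$'s by $\pi$ multiplies $\theta^{\nu_1}\wedge\cdots\wedge\theta^{\nu_p}$, hence $\psi$ and hence $\Ct^p(\psi)$, by $(-1)^\pi$; the vanishing when the index sets differ is immediate from the support of the $\theta^{\mu_a}(e_i)$ as noted above. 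This reduces the whole proposition to one clean base computation plus an elementary symmetry argument.
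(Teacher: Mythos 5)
Your proposal is correct and follows essentially the same route as the paper: both unwind the inductive definition of $\Ct^p$ into a sum over multi-indices $(i_1,\dots,i_p)$ of products $\bigl(\theta^{\mu_1}\wedge\cdots\wedge\theta^{\mu_p}(e_{i_1},\dots,e_{i_p})\bigr)\cdot\bigl(\theta^{\nu_1}\wedge\cdots\wedge\theta^{\nu_p}(e_{i_1},\dots,e_{i_p})\bigr)$, observe that only multi-indices which permute the common index set survive, and collect the $p!$ equal signed contributions (the paper phrases each factor as a determinant $\det(a^j_k)\det(b^j_k)$, which is exactly your sign bookkeeping). Your closing reduction to the base case $\nu=\mu$ plus antisymmetry is a slightly cleaner way to organize the same computation, and your sign $(-1)^\pi$ agrees with the standard generalized Kronecker delta.
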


\begin{proof}
By definition we have
\begin{align*}
 &\Ct^{p}\left(\theta^{\mu_1}\wedge \cdots \wedge\theta^{\mu_{p}} \otimes   \theta^{\nu_1}\wedge \cdots\wedge\theta^{\nu_{p}} \right) 
\\&  \qquad =
\sum_{i_1, \dots, i_m = 1}^{m}  \left( \theta^{\mu_1}\wedge \cdots \wedge\theta^{\mu_{p}} (e_{i_1}, \dots , e_{i_p}) \right)  \cdot  
\left( \theta^{\nu_1}\wedge \cdots \wedge\theta^{\nu_{p}} (e_{i_1}, \dots , e_{i_p})\right) 
\\&  \qquad = \sum_{i_1, \dots, i_m = 1}^{m}  \det \left( a^j_k\right) \cdot  \det  \left( b^j_k\right) 
\\&  \qquad = p!  \; \delta_{\mu_1,\ldots,\mu_{p}}^{\nu_1,\ldots,\nu_{p}},
\end{align*}
where we have used the notation 
$$
  a^j_k =  \left( \theta^{\mu_j }(e_{i_k})\right)  \quad \text{and} \quad   b^j_k =  \left( \theta^{\nu_j }(e_{i_k})\right).
$$
\end{proof}

\begin{corollary}\label{cir.contr}
If $\alpha, \beta \in \mathcal{A}^p(N)$, then 
$$
  \bra \alpha, \beta\ket = \frac{1}{p!}  \Ct^{p}\left(\alpha \otimes \beta\right),
$$
where $\bra \ , \ \ket$  is the natural scalar product defined on the Grassmann algebra (see  \eqref{scalprodext}).
\end{corollary}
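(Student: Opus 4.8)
The plan is to derive Corollary \ref{cir.contr} directly from Proposition \ref{prop.contractnform} by expanding both differential forms in a local orthonormal coframe and invoking bilinearity of both sides in $\alpha$ and $\beta$. First I would fix a point $x \in N$, choose a positively oriented orthonormal moving frame $\{e_1, \dots, e_n\}$ near $x$ with dual coframe $\{\theta^1, \dots, \theta^n\}$, and write $\alpha = \sum_{I} a_I\, \theta^I$ and $\beta = \sum_{J} b_J\, \theta^J$, where $I = (i_1 < \cdots < i_p)$ and $J = (j_1 < \cdots < j_p)$ run over strictly increasing multi-indices of length $p$, and $\theta^I = \theta^{i_1} \wedge \cdots \wedge \theta^{i_p}$. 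Since $\Ct^p$ is $C^\infty(N)$-linear in its argument and the tensor product $\alpha \otimes \beta$ is bilinear, we have $\Ct^p(\alpha \otimes \beta) = \sum_{I,J} a_I b_J\, \Ct^p(\theta^I \otimes \theta^J)$.

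Next I would apply Proposition \ref{prop.contractnform} to each term: $\frac{1}{p!}\Ct^p(\theta^I \otimes \theta^J) = \delta_I^J$, which vanishes unless $I$ and $J$ coincide as sets, and since both are strictly increasing this forces $I = J$, in which case $\delta_I^I = 1$ (the identity permutation being even). Hence $\frac{1}{p!}\Ct^p(\alpha \otimes \beta) = \sum_{I} a_I b_I$. On the other side, recalling that $\{\theta^I\}_{I}$ is an orthonormal basis of $\Lambda^p T_x^*N$ for the natural scalar product induced by $g$ (this is precisely the content of the definition referenced by \eqref{scalprodext}), we get $\bra \alpha, \beta \ket = \sum_{I} a_I b_I$ as well. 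Comparing the two expressions yields the claimed identity, and since the point $x$ was arbitrary the equality holds as smooth functions on $N$.

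The only point requiring a little care is the bookkeeping of which multi-indices appear: in Proposition \ref{prop.contractnform} the indices within each list $\mu_1, \dots, \mu_p$ (resp. $\nu_1, \dots, \nu_p$) are merely required to be pairwise distinct, not ordered, whereas in the expansion of $\alpha$ and $\beta$ it is cleanest to use strictly increasing multi-indices. One should observe that if one instead sums over all ordered tuples of distinct indices, each set is counted $p!$ times and the resulting overcounting in $\Ct^p(\alpha\otimes\beta)$ is exactly matched by the corresponding overcounting in the norm formula, so the two conventions agree; alternatively, simply restrict to increasing multi-indices from the outset, as above, where $\delta_I^J = \delta_{IJ}$ (Kronecker). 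I do not anticipate a genuine obstacle here — the proof is a routine consequence of Proposition \ref{prop.contractnform} once the indexing convention is pinned down; the main thing to get right is to make sure the normalization factor $\frac{1}{p!}$ is consistent with the chosen definition of $\bra\ ,\ \ket$ on $\Lambda^p T^*N$.
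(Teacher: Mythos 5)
Your proof is correct and follows essentially the same route as the paper: expand $\alpha$ and $\beta$ in the orthonormal basis $\{\theta^{i_1}\wedge\cdots\wedge\theta^{i_p}\}_{i_1<\cdots<i_p}$, use bilinearity, and apply Proposition \ref{prop.contractnform} together with orthonormality of this basis. You have merely written out in full the bookkeeping that the paper leaves implicit.
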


\begin{proof}
This follows immediately  from the previous proposition and the fact that at any point $x$ of the manifold, the family 
$\{ \theta^{i_1}\wedge \cdots \wedge\theta^{i_{p}} \}_{i_1 < \cdots< i_p}$  is  orthonormal  in $\Lambda^pT_x^*N$.
\\
\end{proof}

In the next result we  gather  several useful contraction formulas:
\begin{proposition} \label{prop.contractnform2}
With the same notation as above, we have 
\begin{enumerate}[(1)]
\item Any  $\alpha \in \mathcal{A}^p(N)$ can be written as 
$$
 \alpha = \frac{1}{p!} \sum_{i_1 < \dots  < i_p = 1}^{n}  
  \Ct^n\left(\lalpha \otimes   \theta^{i_1} \wedge \cdots \wedge \theta^{i_p} \right) \theta^{i_1} \wedge \cdots \wedge \theta^{i_p}. 
$$

\item If  $\lalpha \in \mathcal{A}^n(N)$  is a differential of degree $n$, then for any permutation 
$\sigma \in S_n$ we have 
$$
 n! \;  {\lalpha} = (-1)^{\sigma}\Ct^n\left(\lalpha \otimes   \theta^{\sigma_1} \wedge \cdots \wedge \theta^{\sigma_m} \right) \dvol_{N,g}.
$$
\item For any  $\psi \in \mathcal{D}^{p,p}(N)$ we have 
\begin{align*}
  \Ct^{p+q}\left(\psi \owedge g^q\right) &=   \prod_{j=1}^{q} (p+j)(n-p+1-j)  \Ct^{p}\left(\psi\right)
\\ &= (q!)^2\binom{p+q}{q} \binom{n-p}{q} \Ct^{p}\left(\psi\right)
\end{align*}
for any integer $1 \leq q \leq (n-p)$.   

In particular, we have
\begin{equation}\label{eq.Cgq}
 \Ct^q(g^q) = \dfrac{n!q!}{(n-q)!} = \binom{n}{q}(q!)^2.
\end{equation}
\end{enumerate}
\end{proposition}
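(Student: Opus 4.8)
The plan is to prove the three claims of Proposition~\ref{prop.contractnform2} in order, since each is essentially a bookkeeping consequence of Proposition~\ref{prop.contractnform} and Corollary~\ref{cir.contr}. For part~(1), I would expand an arbitrary $\alpha \in \mathcal{A}^p(N)$ in the orthonormal basis $\{\theta^{i_1}\wedge\cdots\wedge\theta^{i_p}\}_{i_1<\cdots<i_p}$ of $\Lambda^pT_x^*N$: writing $\alpha = \sum_{i_1<\cdots<i_p} c_{i_1\cdots i_p}\,\theta^{i_1}\wedge\cdots\wedge\theta^{i_p}$, the coefficient is $c_{i_1\cdots i_p} = \langle \alpha, \theta^{i_1}\wedge\cdots\wedge\theta^{i_p}\rangle$ by orthonormality, and then Corollary~\ref{cir.contr} rewrites this inner product as $\tfrac{1}{p!}\Ct^p(\alpha\otimes\theta^{i_1}\wedge\cdots\wedge\theta^{i_p})$. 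One small point to check: the excerpt writes $\Ct^n$ in the displayed formula where $\Ct^p$ is meant (the contraction of a $(p,p)$-double form is $\Ct^p$); I would either read it as $\Ct^p$ or note the typo, but the argument is unchanged.

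For part~(2), a top-degree form $\lalpha \in \mathcal{A}^n(N)$ is a scalar multiple of $\dvol_{N,g} = \theta^1\wedge\cdots\wedge\theta^n$, say $\lalpha = f\,\dvol_{N,g}$. Applying part~(1) with $p = n$, the only nonzero term in the sum is the one indexed by $1<2<\cdots<n$, giving $\lalpha = \tfrac{1}{n!}\Ct^n(\lalpha\otimes\dvol_{N,g})\,\dvol_{N,g}$, i.e. $n!\,\lalpha = \Ct^n(\lalpha\otimes\theta^1\wedge\cdots\wedge\theta^n)\,\dvol_{N,g}$. To get the version with a general permutation $\sigma$, I use that $\theta^{\sigma_1}\wedge\cdots\wedge\theta^{\sigma_n} = (-1)^\sigma\,\theta^1\wedge\cdots\wedge\theta^n$ together with bilinearity of $\owedge$ (equivalently of $\otimes$) and linearity of $\Ct^n$; multiplying through by $(-1)^\sigma$ and using $((-1)^\sigma)^2 = 1$ yields the stated identity.

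Part~(3) is the substantive computation. The strategy is induction on $q$, the base case $q=0$ being trivial. For the inductive step I would expand $g$ in the moving frame as $g = \sum_{i=1}^n \theta^i\otimes\theta^i$ and compute $\Ct\big(\psi\owedge g\big)$ for $\psi\in\mathcal{D}^{p,p}(N)$ directly from the definition of the contraction operator: contracting the last pair of slots of $\psi\owedge g$ against an orthonormal basis, one splits into the cases where the contracted basis vector is fed into the $\psi$-factor (in each of its $k$ "first" slots, likewise $l$ "last" slots, with an antisymmetrization sign) versus into the $g$-factor. A careful count of how many ways the two extra slots can be distributed, using the combinatorial definition of $\owedge$ and the symmetry of $\psi$ and $g$, should give the recursion $\Ct^{p+1}(\psi\owedge g) = (p+1)(n-p)\,\Ct^p(\psi)$ up to lower-order contraction terms that reassemble into $\Ct\big(\Ct^p(\psi)\owedge\text{(something)}\big)$; iterating and multiplying the factors $(p+j)(n-p+1-j)$ for $j=1,\dots,q$ yields the first displayed line, and recognizing $\prod_{j=1}^q(p+j) = \tfrac{(p+q)!}{p!} = q!\binom{p+q}{q}$ and $\prod_{j=1}^q(n-p+1-j) = \tfrac{(n-p)!}{(n-p-q)!} = q!\binom{n-p}{q}$ gives the second line. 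Finally, setting $p=0$ and $\psi = 1$ (so $\Ct^0(\psi) = 1$, $g^q = g^{\owedge q}$) specializes to \eqref{eq.Cgq}: $\Ct^q(g^q) = \prod_{j=1}^q j(n+1-j) = q!\cdot\tfrac{n!}{(n-q)!}$.

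The main obstacle is getting the combinatorial constants in part~(3) exactly right. The definition of $\owedge$ carries the normalization factor $\tfrac{1}{k!\,l!\,p!\,q!}$ and the definition of $\Ct$ sums over an unnormalized orthonormal basis, so the bookkeeping of which antisymmetrizations collapse (because $g$ is symmetric and $\psi$ is already alternating in each block) and which produce the factor $(p+1)$ versus $(n-p)$ is delicate; I would organize this by first proving the single-step identity $\Ct^{p+1}(\psi\owedge g) = (p+1)(n-p)\Ct^p(\psi)$ as a clean lemma-style computation (perhaps even just for $\psi$ a simple $\owedge$-monomial and extending by linearity), and only then iterate. Everything else is a routine unwinding of the definitions established earlier in Section~3.
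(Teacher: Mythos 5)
Your proposal is correct and follows essentially the same route as the paper: part (1) is the orthonormal expansion combined with Corollary \ref{cir.contr} (and your reading of the displayed $\Ct^n$ as $\Ct^p$ is indeed the intended statement), part (2) is the top-degree special case of (1), and part (3) is exactly the paper's argument, namely proving the one-step identity $\Ct^{p+1}(\psi\owedge g)=(p+1)(n-p)\,\Ct^{p}(\psi)$ by reducing to simple monomials $\theta^{\mu_1}\wedge\cdots\wedge\theta^{\mu_p}\otimes\theta^{\mu_1}\wedge\cdots\wedge\theta^{\mu_p}$ with $g=\sum_{\nu}\theta^{\nu}\otimes\theta^{\nu}$, then iterating by induction on $q$ and simplifying the resulting product of factors.
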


\begin{proof}
The first formula follows immediately  from the corollary and the expansion   
$$
 \alpha =  \sum_{i_1 < \dots  < i_p = 1}^{n}  
 \  \left\langle \lalpha ,\theta^{i_1} \wedge \cdots \wedge \theta^{i_p} \right\rangle \theta^{i_1} \wedge \cdots \wedge \theta^{i_p},
$$
and the second identity is a special case of the first.  

\smallskip

We now prove the third formula  for $q=1$. Using the first identity,  it suffices to consider the tensor
$\psi = \theta^{\mu_1}\wedge \cdots \wedge\theta^{\mu_{p}} \otimes \theta^{\mu_1}\wedge \cdots \wedge\theta^{\mu_{p}}$
where the $\mu_i$s are pairwise distinct. We then have
\begin{align*}
 \Ct^{p+1}\left(\psi \owedge g\right) &= 
\sum_{\nu =1}^n  \Ct^{p+1}\left(\theta^{\mu_1}\wedge \cdots \wedge\theta^{\mu_{p}}\wedge \theta^{\nu}\otimes 
\theta^{\mu_1}\wedge \cdots \wedge\theta^{\mu_{p}}\wedge \theta^{\nu} \right) \\ &=  (n-p) (p+1) !
\end{align*}
because there are exactly $(n-p)$  indices  $\nu \in \{1,\dots,n\} \setminus \{\mu_1,\ldots,\mu_{p}\}$.
 Since $\Ct^{p}\left(\psi\right) = p!$, we have proved that 
$$
  \Ct^{p+1}\left(\psi \owedge g\right) =  (n-p) (p+1)\Ct^{p}\left(\psi \right).
$$
The general case $q \geq 2$ easily follows by induction.
\end{proof}

\medskip

We can now prove that the Gauss--Kronecker Curvature of a hypersurface in $\R^{m+1}$ is  an intrinsic invariant of the underlying Riemannian manifold if $m$ is even. This observation can be seen as a generalization of Gauss' Theorema Egregium.

\begin{proposition}\label{GK_intrinseque}
The Gauss--Kronecker curvature of  a hypersurface  $M$  of even dimension $m$ in Euclidean space $\R^{m+1}$
can be written in terms of its Riemann curvature tensor as follows:
\begin{equation}\label{RGKcurvature2}
   K  =   \frac{2^{ {\frac{m}{2}}}}{(m!)^2} \Ct^m(R^{ {\frac{m}{2}}}) =   \frac{1}{m!!}  \sum_{\sigma\in S_{m}} \sum_{\tau\in S_{m}} (-1)^{\sigma}(-1)^{\tau} 
   R_{\sigma_{1}\sigma_{2}\tau_{1}\tau_{2}} \cdots  R_{\sigma_{m-1}\sigma_{m}\tau_{m-1}\tau_{m}}.
\end{equation}
\end{proposition}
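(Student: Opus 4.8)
The plan is to reduce the whole statement to the Gauss equation $R_{ijkl}=h_{ik}h_{jl}-h_{il}h_{jk}$. Comparing this with the Kulkarni--Nomizu formula in Example (f) above, the Gauss equation says precisely that, as double forms of type $(2,2)$,
$$
 R=\tfrac12\, h\owedge h .
$$
Taking the $(m/2)$-th double wedge power and using that $\owedge$ is associative, this gives at once $R^{m/2}=2^{-m/2}\,h^{\owedge m}$, where $h^{\owedge m}=h\owedge\cdots\owedge h$ with $m$ factors. Hence the first equality in \eqref{RGKcurvature2} will follow as soon as we prove $\Ct^m(h^{\owedge m})=(m!)^2\,K$.

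To compute $\Ct^m(h^{\owedge m})$ I would work in an orthonormal frame $\{e_1,\dots,e_m\}$ chosen so as to diagonalize the symmetric second fundamental form, writing $h=\sum_i\kappa_i\,\theta^i\otimes\theta^i$ with $\kappa_1,\dots,\kappa_m$ the principal curvatures, so that $K=\kappa_1\cdots\kappa_m$. Expanding,
$$
 h^{\owedge m}=\sum_{i_1,\dots,i_m}\kappa_{i_1}\cdots\kappa_{i_m}\,\bigl(\theta^{i_1}\wedge\cdots\wedge\theta^{i_m}\bigr)\otimes\bigl(\theta^{i_1}\wedge\cdots\wedge\theta^{i_m}\bigr),
$$
and only the terms with $(i_1,\dots,i_m)$ a permutation of $(1,\dots,m)$ survive; each such term equals $\kappa_1\cdots\kappa_m\,\dvol_N\otimes\dvol_N$ (the two signs cancel), so $h^{\owedge m}=m!\,K\,\dvol_N\otimes\dvol_N$. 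Applying $\Ct^m$ and using Proposition \ref{prop.contractnform} (equivalently Corollary \ref{cir.contr}) in the form $\Ct^m(\dvol_N\otimes\dvol_N)=m!$ gives $\Ct^m(h^{\owedge m})=(m!)^2K$, hence $K=\tfrac{2^{m/2}}{(m!)^2}\Ct^m(R^{m/2})$, which is the first equality. As a consistency check, the same computation with $h$ replaced by the metric $g$ reproduces $\Ct^m(g^{\owedge m})=(m!)^2$, in agreement with \eqref{eq.Cgq}.

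For the second equality I would instead expand $R^{m/2}$ directly, using the multi-factor version of the $\owedge$-product formula obtained by iterating the two-factor formula in the definition, evaluate the resulting $(m,m)$-double form on $(e_1,\dots,e_m;e_1,\dots,e_m)$, and invoke the elementary fact that $\Ct^m(\psi)=m!\,\psi(e_1,\dots,e_m;e_1,\dots,e_m)$ for any $\psi\in\mathcal D^{m,m}(N)$ when $\dim N=m$ (contributions with a repeated index vanish, and permuting the remaining ones costs $(\pm1)^2=1$). This rewrites $\Ct^m(R^{m/2})$ as a double sum over $S_m\times S_m$ of signed products $R_{\sigma_1\sigma_2\tau_1\tau_2}\cdots R_{\sigma_{m-1}\sigma_m\tau_{m-1}\tau_m}$; inserting the prefactor $\tfrac{2^{m/2}}{(m!)^2}$ from the first equality and simplifying the numerical constants with the double-factorial identity $m!!=2^{m/2}(m/2)!$ produces the closed formula. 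I expect the main obstacle to be exactly this bookkeeping of combinatorial constants — the normalizations built into $\owedge$ and into $\Ct^p$, and the factor coming from the $m/2$ interchangeable copies of $R$; the geometry is entirely contained in the single identity $R=\tfrac12 h\owedge h$.
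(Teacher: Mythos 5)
Your proposal follows essentially the same route as the paper's proof: the Gauss equation recast as $R=\tfrac12\,h\owedge h$, the identity $\Ct^m(h^{m})=(m!)^2\det(h_{ij})$, and a direct expansion of $R^{\frac{m}{2}}$ followed by full contraction to produce the permutation double sum. Your only variations are cosmetic: you diagonalize $h$ by principal curvatures where the paper identifies $\det(h_{ij})$ through the $S_m\times S_m$ sum, and the frame expansion you sketch for the second equality is exactly what the paper packages as Lemma \ref{lem:formule_R} together with Proposition \ref{prop.contractnform}.

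One caveat about the step you deferred ("simplifying the numerical constants"): carried out honestly, the expansion gives $\frac{1}{m!}\Ct^m(R^{\frac{m}{2}})=\frac{1}{4^{m/2}}\sum_{\sigma,\tau}(-1)^{\sigma}(-1)^{\tau}R_{\sigma_1\sigma_2\tau_1\tau_2}\cdots R_{\sigma_{m-1}\sigma_m\tau_{m-1}\tau_m}$, hence $K=\frac{2^{m/2}}{(m!)^2}\Ct^m(R^{\frac{m}{2}})=\frac{1}{2^{m/2}\,m!}\sum_{\sigma,\tau}(\cdots)$, which is also the constant the paper's own proof ends with. This does \emph{not} match the prefactor $\frac{1}{m!!}=\frac{1}{2^{m/2}(m/2)!}$ displayed in \eqref{RGKcurvature2}: test $m=2$, where the double sum equals $4R_{1212}=4K$, so the correct constant is $\tfrac14$, not $\tfrac12$. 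So do not expect the identity $m!!=2^{m/2}\left(\tfrac{m}{2}\right)!$ to reconcile your bookkeeping with the displayed right-hand side; the constant in the statement appears to be a typo, and your method, correctly executed, yields the first equality together with the double-sum formula with coefficient $\frac{1}{2^{m/2}m!}$.
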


In this formula, $R_{ijkl} = R(e_i,e_j;e_k,e_l)$, where $e_1,\dots, e_m$ is a given orthonormal frame. If the basis is not orthonormal, the right hand side must be divided by ${\det(g_{ij})}$.

\begin{proof}
From Lemma \ref{lem:formule_R} we know that 
$$
  R = \frac{1}{4}  \sum_{i,j,k,l = 1}^m R_{ijkl} \ \theta^i \wedge\theta^j \otimes  \theta^k\wedge\theta^l, 
$$
therefore 
$$
  R^{ {\frac{m}{2}}} = \frac{1}{4^{ {\frac{m}{2}}}}   \sum_{\mu_{1}, \dots ,\mu_{n}= 1}^m  \sum_{\nu_{1}, \dots ,\nu_{n}= 1}^m
 R_{\mu_{1}\mu_{2}\nu_{1}\nu_{2}} \cdots R_{\mu_{m-1}\mu_{m}\nu_{m-1}\nu_{m}}
\ \theta^{\mu_1} \wedge  \cdots \wedge \theta^{\mu_m} \otimes  \theta^{\nu_1}  \wedge \cdots \wedge \theta^{\nu_m},
$$
and using Proposition \ref{prop.contractnform}, we obtain
$$
 \frac{1}{m!} \Ct^m(R^{ {\frac{m}{2}}})  =  \frac{1}{4^{ {\frac{m}{2}}}}   \sum_{\sigma\in S_{m}} \sum_{\tau\in S_{m}} (-1)^{\sigma}(-1)^{\tau} 
 R_{\sigma_{1}\sigma_{2}\tau_{1}\tau_{2}} \cdots R_{\sigma_{m-1}\sigma_{m}\tau_{m-1}\tau_{m}}.
$$
The  second fundamental form $h$ of  $M\subset \R^{m+1}$  is a double form 
$h\in \mathcal{D}^{1,1}(M)$, and  by Gauss' Equation we have 
$$
  R = \frac{1}{2} h\owedge h = \frac{1}{2}   \sum_{i,j,k,l = 1}^m \  h_{ik}h_{jl}  \ \theta^i \wedge\theta^j \otimes  \theta^k\wedge\theta^l.  
$$
Repeating the previous argument we have 
\begin{align*}
 \frac{1}{m!} \Ct^m(R^{ {\frac{m}{2}}})  & =   \frac{1}{m!} \frac{1}{2^{ {\frac{m}{2}}}}  \Ct^m(h^m) 
\\&= 
\frac{1}{2^{ {\frac{m}{2}}}}   \sum_{\sigma\in S_{m}} \sum_{\tau\in S_{m}} (-1)^{\sigma}(-1)^{\tau} 
 h_{\sigma_{1}\tau_{1}}  h_{\sigma_{2}\tau_{2}}  \cdots  h_{\sigma_{m}\tau_{m}} 
\\&=\frac{m!}{2^{ {\frac{m}{2}}}}  \det \left(h_{ij}\right),
\end{align*}
and we conclude that 
\begin{align*}
 K = \det \left(h_{ij}\right) &=  \frac{2^{ {\frac{m}{2}}}}{(m!)^2} \Ct^m(R^{ {\frac{m}{2}}})
 \\&= \frac{1}{2^{ {\frac{m}{2}}}m!}  \sum_{\sigma\in S_{m}} \sum_{\tau\in S_{m}} (-1)^{\sigma}(-1)^{\tau} 
 R_{\sigma_{1}\sigma_{2}\tau_{1}\tau_{2}} \cdots R_{\sigma_{m-1}\sigma_{m}\tau_{m-1}\tau_{m}}.
\end{align*}
\end{proof}

\subsection{Integrating Double Forms}

Some interesting geometric  quantities can be obtained by considering a geometrically defined double form $\psi \in \mathcal{D}^{p,p}(M)$ and integrating its full contraction $\Ct^p(\psi)$ over the manifold. In particular, the 
following functional, which is defined on $\mathcal{D}^{2,2}(N)\times \mathcal{D}^{1,1}(N)$, will play an important role
in the sequel:
\begin{definition}\label{defQ} \rm 
Given a  Riemannian manifold  $(N,g)$ of dimension $n$, and double forms $A \in \mathcal{D}^{2,2}(N)$ and $b \in \mathcal{D}^{1,1}(N)$, we define for any integer $0 \leq k \leq \frac{n}{2}$
\begin{equation}\label{eq.defQ}
 \mathcal{Q}_k(A, b \mid N, g)  = \frac{1}{n! \, k! \, (n-2k) !} \int_N \Ct^n(A^k \owedge b^{n-2k}) \dvol_{g},
 \end{equation}
provided $\Ct^n(A^k \owedge b^{n-2k})$ is integrable on $N$. 

 \smallskip

The special cases where $k=0$ or $2k=n$ will also be denoted as
$$
 \mathcal{Q}_0(b \mid N, g)  =  \frac{1}{(n!)^2} \int_N \Ct^n( b^{n}) \dvol_{g} = \int_N  \det\left( b_{ij}\right) \dvol_{g},
$$
and, when  $n$ is even, 
$$
 \mathcal{Q}_{\frac{n}{2}} (A \mid N, g)  = \frac{1}{(n!)^2} \int_N \Ct^n(A^{\frac{n}{2}} ) \dvol_{g}.
$$
\end{definition}

The notation $\mathcal{Q}_k(A, b \mid N, g)$ is somewhat heavy but it precisely indicates what is being integrated on which manifold. When there is no risk of ambiguity, we will simplify it to   $\mathcal{Q}_k(A,b \mid N )$, or even  $\mathcal{Q}_k(A,b)$.

\smallskip

If  either $A$ or $b$ (or both) are curvature invariants of the manifold, the integral \eqref{eq.defQ} will be referred to as a \textit{curvature integral}. \index{curvature integral} 
We will need the special cases where $b$ is either the metric tensor $g$ or the second fundamental form $h$ in the case of a hypersurface, and $A$ is a linear combination of the curvature tensor $R$ and the double forms $g\owedge g$ or $h\owedge h$.

\medskip

\begin{example} \label{Ex.IndexQ}\rm 
As a first important example, the index of a smooth vector field $\xi$ on an $m$-dimensional  Riemannian manifold $(M,g)$ with an isolated singularity at $p\in M$ can be expressed as
$$
  \Ind(X,p) = \lim_{\varepsilon \to 0} \frac{1}{\Vol(S^{m-1})}\int_{S_\varepsilon} \det (L) \dvol_{S_\varepsilon},
$$
where $L : TM \to TM$ is defined as $L(Y) = \nabla_Y\zeta$ with $\zeta = \xi/\|\xi\|$.   Introducing the double form $\psi \in \mathcal{D}^{1,1}$ defined by  $\psi(X,Y) = \bra X,L(Y)\ket$, one can write the index as 
\begin{equation}\label{eQ.index}
  \Ind(X,p)  =   \lim_{\varepsilon \to 0} \tfrac{1}{\Vol(S^{m-1})}  \mathcal{Q}_0(\psi \mid {S_\varepsilon} , g). 
\end{equation}
\end{example}

\subsection{The Lipschitz--Killing Curvatures}

An important class of examples of curvature integrals is given by the \emph{Lipschitz--Killing Curvatures}, which we define now:

\begin{definition}\label{defLKcurvature}
The  \emph{total Lipschitz--Killing curvatures} of order $j$ of an $n$-dimensional Riemannian manifold $(N,g)$ are defined as $\mathcal{K}_j(N,g) =0$  if $j$ is odd and 
\begin{equation}\label{LKcurvature}
  \mathcal{K}_{2k}(N,g)  =  \mathcal{Q}_{k}(R,g \mid N),
\end{equation}
if $j = 2k \leq n$ is even.
\end{definition} \index{Lipschitz--Killing Curvatures}

\textbf{Remark.}  Using Proposition \ref{prop.contractnform},  we have 
$$\Ct^{n}\left(R^k \owedge g^{n-2k}\right)  = \dfrac{(n-2k)! \; n!}{(2k)!}  \Ct^{2k}\left(R^k\right),$$
therefore the  Lipschitz--Killing curvatures can also be written as 
\begin{equation}\label{LKcurvature2}
  \mathcal{K}_{2k}(N,g)  = \frac{1}{k! \, (2k)!} \int_N   \Ct^{2k}(R^k) \dvol_N,
\end{equation} 
which is the definition  given in  \cite{Gray}.

\medskip 

\begin{example}\label{Examples.TLK} \rm 
(a) The Lipschitz--Killing curvature of order $0$ is integrable if and only if $(N,g)$ has 
finite volume. In this case we have 
$$
  \mathcal{K}_0(N,g) = \text{Vol}_g(N).
$$
(b) The total Lipschitz--Killing curvature of order $2$ on a compact manifold  $(N,g)$ is
half the integral of its scalar curvature:
$$
\mathcal{K}_2(N,g) =\frac{1}{2} \int_{N} S_g  \dvol_N.
$$

\smallskip

(c)  If  $(N,g)$ is a compact space form with constant sectional curvature $a$, then using  $R = \frac{a}{2}g\owedge g$ we obtain 
\begin{equation}\label{Eq_TLK-curvature_space_form}
  \mathcal{K}_{2k}(N,g)   =   \frac{n! \ a^k  }{2^kk!(n-2k)!} \cdot\mathrm{Vol}(N,g).
\end{equation}
\end{example}


\medskip 

A notable application of the Lipschitz--Killing curvatures is illustrated by the celebrated H. Weyl  \textit{Tube Formula} proved in 1938:
\index{Weyl's tube formula}
\begin{theorem} 
If $N$ is a smooth closed submanifold of dimension $n$ in $\mathbb{R}^d$, then the volume of a (small enough) $\varepsilon$-tubular neighborhood
$
  U_{\varepsilon}(N) = \{x\in \mathbb{R}^d \mid \mathrm{dist}(x,N) \leq \varepsilon\}
$
is given by 
$$
\Vol\left(U_{\varepsilon}(N)\right) = \sum_{k=0}^{\lfloor\frac{n}{2}\rfloor} \alpha_{d,n,k}\, \mathcal{K}_{2k}(N) \, \varepsilon^{d-n+2k},
$$
where 
$$
\alpha_{d,n,k} = \frac{\pi^{\frac{d-n}{2}} }{2^{k+1}\Gamma \left(\frac{d-n}{2} +1+ k\right)}.
$$
Here $\lfloor{\frac{n}{2}}\rfloor$ denotes the nearest integer that is less than or equal to $\frac{n}{2}$. 
\end{theorem}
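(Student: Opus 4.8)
The plan is to prove Weyl's Tube Formula by reducing the volume computation to an integral over the unit normal bundle and then recognizing the Lipschitz--Killing curvatures via the double form formalism developed above. First I would set up the geometry: let $\pi : \nu(N) \to N$ be the normal bundle of $N \subset \mathbb{R}^d$, with fibers of dimension $c = d - n$ (the codimension), and let $\mathrm{exp}^\perp : \nu(N) \to \mathbb{R}^d$ be the normal exponential map, $\mathrm{exp}^\perp(x, v) = x + v$. For $\varepsilon$ small enough this is a diffeomorphism from the $\varepsilon$-disk bundle onto $U_\varepsilon(N)$, so by the change of variables formula
\begin{equation*}
 \Vol(U_\varepsilon(N)) = \int_{\{\|v\| \leq \varepsilon\}} |\det(d\,\mathrm{exp}^\perp)_{(x,v)}|\, d(x,v),
\end{equation*}
and the task is to compute this Jacobian. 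Writing a point of the disk bundle in ``polar'' form $v = t\, u$ with $u$ a unit normal and $0 \leq t \leq \varepsilon$, the Jacobian factors through the shape operator $S_u = -(\text{projection of } \nabla\, u \text{ onto } TN)$, the Weingarten map in the direction $u$, and one gets $|\det(d\,\mathrm{exp}^\perp)| = t^{c-1}\det(\mathrm{Id}_{TN} - t\, S_u)$ times the volume element of the unit normal sphere bundle. Integrating $t$ from $0$ to $\varepsilon$ first produces a polynomial in $\varepsilon$.

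Next I would expand the determinant. Since $\det(\mathrm{Id} - t\, S_u) = \sum_{j=0}^n (-t)^j \sigma_j(S_u)$, where $\sigma_j$ is the $j$-th elementary symmetric polynomial of the principal curvatures in the direction $u$, integrating against $t^{c-1}\,dt$ over $[0,\varepsilon]$ gives
\begin{equation*}
 \Vol(U_\varepsilon(N)) = \sum_{j=0}^n \frac{(-1)^j \varepsilon^{c+j}}{c+j} \int_{S\nu(N)} \sigma_j(S_u)\, d(\text{unit normal bundle}).
\end{equation*}
The odd-$j$ terms should vanish by the symmetry $u \mapsto -u$ of the unit normal sphere bundle (which sends $S_u \mapsto -S_u$ and hence $\sigma_j(S_u) \mapsto (-1)^j \sigma_j(S_u)$), leaving only $j = 2k$; this matches the claimed absence of odd terms and the exponent $\varepsilon^{c+2k} = \varepsilon^{d-n+2k}$. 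It then remains to identify, for even $j = 2k$, the fiber-integrated quantity $\int_{S^{c-1}} \sigma_{2k}(S_u)\, du$ (integration over the unit sphere in each normal space, with $N$ integration still to be done) with a constant times $\Ct^{2k}(R^k)$ pointwise on $N$. This is where the Gauss equation enters: for a submanifold of Euclidean space, $R = \tfrac12 \sum_u h^u \owedge h^u$ where $h^u$ is the scalar second fundamental form in the direction $u$ (an orthonormal sum over a basis of the normal space), and $\sigma_{2k}(S_u)$ is, up to the normalization from Proposition \ref{prop.contractnform2}, the full contraction $\Ct^{2k}\big((h^u)^{2k}\big)/((2k)!\,\text{something})$ — in other words a Pfaffian-like polynomial in $h^u$. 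Integrating the monomials in the entries of $h^u$ over the unit sphere $S^{c-1}$ produces Gaussian-type moment integrals whose combinatorics reassemble precisely into $\Ct^{2k}(R^k)$, by comparing the expansion of $R^k = 2^{-k}(\sum_u h^u \owedge h^u)^k$ with the multinomial expansion of the spherical average.

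The main obstacle is exactly this last identification: tracking the constant $\alpha_{d,n,k} = \pi^{(d-n)/2}/\big(2^{k+1}\Gamma(\tfrac{d-n}{2}+1+k)\big)$ through the two independent combinatorial reductions — the spherical moment integral $\int_{S^{c-1}} u_{i_1}\!\cdots u_{i_{2k}}\, du$, which contributes the $\pi^{(d-n)/2}$ and the $\Gamma$-factor via the standard formula for even moments of the uniform measure on $S^{c-1}$, and the double-form contraction identities (Propositions \ref{prop.contractnform} and \ref{prop.contractnform2}), which contribute the factorials and the $2^{k}$-powers hidden in $k!\,(2k)!$ from \eqref{LKcurvature2}. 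The cleanest route is probably to test the whole chain on a model case where both sides are known in closed form — a round sphere $N = S^n \subset \mathbb{R}^{n+1}$ (so $c=1$, no spherical integration, and $\mathcal{K}_{2k}$ is given by \eqref{Eq_TLK-curvature_space_form}) to pin down the double-form constants, and then a flat $N$ of higher codimension to isolate the purely spherical-moment constant — and then assemble. I would present the geometric set-up and the polar-coordinates Jacobian computation in full, state the Gauss-equation rewriting of $\sigma_{2k}$, and then either carry out or carefully reference the moment-integral bookkeeping that yields $\alpha_{d,n,k}$, since that part is intricate but entirely mechanical.
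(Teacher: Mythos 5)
The paper itself does not prove this theorem: it states Weyl's Tube Formula and refers to Weyl's original paper and Gray's book for the proof. Your outline is precisely the classical argument used in those references — normal exponential map and change of variables, polar coordinates on the normal bundle with Jacobian $t^{c-1}\det(\mathrm{Id}-tS_u)$, vanishing of odd terms by the symmetry $u\mapsto -u$, and identification of the spherically averaged $\sigma_{2k}(S_u)$ with a universal multiple of $\Ct^{2k}(R^k)$ via the Gauss equation $R=\tfrac12\sum_\alpha h^\alpha\owedge h^\alpha$ and the Wick/pairing structure of the even moments of the unit normal sphere. The plan is sound; just be aware that the step you defer as ``mechanical bookkeeping'' is exactly the mathematical heart of Weyl's theorem (the intrinsicness of the averaged invariants), so in a full write-up the pairing argument showing that the mixed terms in $(\sum_\alpha u_\alpha h^\alpha)^{2k}$ reassemble into $(2R)^k$ must be carried out explicitly rather than only tested on model cases, which can then serve as a check on the constant $\alpha_{d,n,k}$.
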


\smallskip

We refer to the original paper  \cite{Weyl} by H. Weyl or the book \cite{Gray} for a proof of the Tube Formula. 

\medskip 

The other important result is  the Gauss--Bonnet Formula itself, which, for a closed  Riemannian manifold $(M,g)$ of even dimension $m$,  can be written as
\begin{equation}\label{GaussBonnetLK}
  \mathcal{K}_{m}(M) = {(2\pi)^{\frac{m}{2}}}  \chi(M).
\end{equation}
As mentioned in Section \ref{sec.hopf} and explained in detail  in \cite[\S 5.5]{Gray}), Allendoerfer used some arguments and calculations from  Weyl's paper in a crucial step of his proof.  

\medskip

We will now give a new proof of the Gauss--Bonnet--Hopf Formula \eqref{GBHopf}, this time an intrinsic one,  as a special case of the Gauss--Bonnet Formula   \eqref{GaussBonnetLK}. To this aim, it  will be convenient to introduce the following definition: 
\begin{definition}\label{defRR} \index{Gauss--Bonnet Density}
The  \emph{Gauss--Bonnet density} of the Riemannian manifold $(M,g)$ of even dimension $m$ is the function $\RR : M \to \R$ defined as
\begin{equation}\label{eq.defRR}
  \RR = \frac {1}{m!\left(\frac{m}{2}\right)! } \Ct^m(R^{ {\frac{m}{2}}}).
\end{equation}
\end{definition}
Observe that the top Lipschitz--Killing curvature is the integral of the Gauss--Bonnet density:
\begin{equation}\label{intRR}
  \mathcal{K}_{m}(M) = \int_M \RR \dvol_g. 
\end{equation}
Note also that the  proof of Proposition \ref{GK_intrinseque}  shows that the Gauss--Bonnet density of an oriented even-dimensional hypersurface in Euclidean space is equal, up to a  multiplicative constant, to the Gauss--Kronecker curvature. More precisely we have  
\begin{equation}\label{eq.RisK}
  K =    \frac{2^{ {\frac{m}{2}}}}{(m!)^2} \Ct^m(R^{ {\frac{m}{2}}}) =  
\frac {m!!}{ m!} \, \RR. 
\end{equation}
Assuming the Gauss--Bonnet Formula \eqref{GaussBonnetLK} has been established, we now easily deduce \eqref{GBHopf}: 
\begin{equation}\label{GBHopf2}
 \int_M K d\mathrm{vol}_N =   \frac {{2}^{\frac{m}{2}} \, \left(\frac{m}{2}\right)!}{ m!} \, \int_M\RR  \dvol_M
 =    \frac {{2}^{\frac{m}{2}} \, \left(\frac{m}{2}\right)!}{ m!} \, (2\pi)^{\frac{m}{2}}\chi(M) = 
  \frac{1}{2} \Vol(S^m) \chi(M),
\end{equation}
because the volume of an even-dimensional sphere is   
$$
 \Vol(S^m) =   {2^{m+1} \pi^{\frac{m}{2}} \, \frac{ \left(\frac{m}{2}\right)!}{m!}  }.
$$

The proof of \eqref{GaussBonnetLK} will be given in next section. 

\medskip 

\begin{remark} \rm
The Gauss--Bonnet density $\RR$ that we have just defined is also referred to as the \textit{Gauss--Bonnet Integrand}.
Fenchel proposed calling it the \textit{Lipschitz--Killing curvature}, and some authors do adopt that terminology. However, we find it problematic considering our Definition \ref{defLKcurvature}. Note  that $\RR$ is precisely the intrinsically defined curvature function  that was asked for in Hopf's Curvatura Integra problem (up to a   possible multiplicative constant).
\end{remark}

\medskip 

\begin{remark} \rm
The Gauss--Bonnet density, as well as its integral, also made an appearance in the notable 1938 paper \cite{Lanczos} by C. Lanczos. His work's context was the formulation of the fundamental laws of nature based on the principle of least action. Lanczos observed that the integral $\int \RR$ remains invariant under variations of the metric $g$, establishing it as a differential invariant of the manifold. He concluded that, within the framework of field theory, the Gauss--Bonnet density doesn't bring any additional information. Lanczos' work was carried out independently of the concurrent efforts by mathematicians addressing the Hopf problem and the Gauss--Bonnet formula.
\end{remark}

\section{Chern's Insight and the Tao of Gauss--Bonnet}
For a compact even-dimensional manifold with boundary $(M,g)$, one needs to add a boundary term to \eqref{GaussBonnetLK}. 
This  additional term is a sum of curvature integrals involving the curvature tensor $R$ of $(M,g)$ and the second fundamental form $h$ of the boundary $\partial M$. The Gauss--Bonnet Formula  takes the following form:
\begin{equation*}
   (2\pi)^{\frac{m}{2}} \chi(M) = \mathcal{K}_{m}(M)  + \sum_{k=0}^{\frac{m}{2}-1} b_{m,k} \, \mathcal{Q}_k({R},h \mid \partial M, {g}),
\end{equation*}
where the coefficient $b_{m,k}$ will be identified in due time.  We will also obtain a similar formula, where the tensor  $R$ is being replaced by  the intrinsic curvature $\bar{R}$ of the boundary.

\medskip

Our goal in this section is  to  present a comprehensive and intrinsic proof of the above formula.  Our approach closely follows the works of Chern \cite{Chern1944,Chern1945}, with a few minor adjustments, which  are discussed  in \S \ref{sec.compchern}. In  what follows, the formalism of moving frames is extensively used; we refer the reader to the appendix, which contains all the necessary definitions and formulas. Also, a number of expressions below involve double factorials, see \S \ref{sec.volumeSd} for the definition ans some basic properties.

\subsection{The Emergence of the Pfaffian}

The ingredients in Chern's proof are differential forms and specifically E. Cartan's moving frame method. Referring to \eqref{intRR}, we see that one needs to closely consider the differential form 
$$
  \RR \dvol_g,
$$
where $\RR$ is the Gauss--Bonnet density defined in \eqref{defRR}. 
Invoking Lemma \ref{lem:formule_R}, we see that the double form $R^m \in \mathcal{D}^{m,m}(M)$ can be written as
\[
R^{ {\frac{m}{2}}} = \frac{1}{2^{ {\frac{m}{2}}}} \sum_{\mu_1,\dots ,\mu_m=1}^m {\Omega}_{\mu_1, \mu_2} \wedge \ldots \wedge {\Omega}_{\mu_{m-1},\mu_{m}}
\otimes \theta^{\mu_1} \wedge \cdots \wedge \theta^{\mu_m},
\]
and Proposition 5.4 implies
\[
\frac{1}{m!} \Ct^m(R^{ {\frac{m}{2}}}) \theta^{1} \wedge \cdots \wedge \theta^{m} =
\frac{1}{2^{ {\frac{m}{2}}}} \sum_{\sigma\in S_{m}} (-1)^{\sigma} {\Omega}_{\sigma_1, \sigma_2} \wedge \ldots \wedge {\Omega}_{\sigma_{m-1},\sigma_{m}}.
\]
This relation  can be written as
\begin{equation}\label{eq.prepfaff}
\RR \dvol_{g} = \Pf(\Omega),
\end{equation}
where   the $m$-form
\[
\Pf(\Omega) =  \frac{1}{m!!}  \sum_{\sigma\in S_{m}} (-1)^{\sigma} {\Omega}_{\sigma_1, \sigma_2} \wedge \ldots \wedge {\Omega}_{\sigma_{m-1},\sigma_{m}}
\]
is the \textit{Pfaffian}\footnote{The \textit{Pfaffian} of a skew-symmetric matrix $A = (a_{ij})\in M_m(\R)$ is defined for $m$ even to be 
$\Pf(A) = \frac{1}{m!!}  \sum_{\sigma\in S_{m}} (-1)^{\sigma} \prod_{i=1}^{ {\frac{m}{2}}}a_{\sigma_{2i-1},\sigma_i}$. The notion was introduced by A. Cayley.} of the curvature form. Observe that \eqref{eq.prepfaff} implies that the Pfaffian $\Pf(\Omega)$ does not depend on the chosen moving frame.
\index{Pfaffian}

\medskip

\textbf{Examples.}
Recall that in dimension $2$, there is only one non-zero curvature form (up to sign), namely $\Omega_{12} = -\Omega_{21}$. Therefore, for a Riemannian surface $(S, g)$, the Pfaffian of its curvature form reduces to:
\[
\Pf(\Omega) = \frac{1}{2}(\Omega_{12} - \Omega_{21}) = \Omega_{12} = \frac{1}{2}R_{12ij}\theta^i \wedge \theta^j = R_{1212}\dvol_S = K\dvol_S,
\]
where $K$ is the Gauss curvature of the surface $S$.

If $M$ is a 4-dimensional manifold, then its Pfaffian is given by:
\[
\Pf(\Omega) = \frac{1}{8} \sum_{\sigma\in S_{4}} (-1)^{\sigma} {\Omega}_{\sigma_1, \sigma_2} \wedge {\Omega}_{\sigma_{3},\sigma_{4}}
\]
is a sum of $24$ terms. But using  the symmetries $ \Omega_{ab} = - \Omega_{bc}$ and $\Omega_{ab}\wedge \Omega_{cd} = \Omega_{cd} \wedge \Omega_{ab}$, it can be reduced to 
\[
 \Pf(\Omega) = {\Omega}_{1,2} \wedge {\Omega}_{3,4} + {\Omega}_{2,3} \wedge {\Omega}_{1,4} + {\Omega}_{3,1} \wedge {\Omega}_{2,4}.
\]
See exercise 2 in \S \ref{sec.exercices} for a generalization  in higher dimension. 

\subsection{A Tale of Exactness, featuring a Vector Field}

The next result lies at the core of Chern's proof of the Gauss--Bonnet Formula. It states  that when the manifold carries a non-vanishing vector field, the Pfaffian of the curvature form is exact.

\begin{proposition}[Chern]\label{Primitive}
 Let $(M,g)$ be an oriented Riemannian  manifold of even dimension $m$. If $M$ admits an everywhere  non vanishing vector field $\xi$,  then $\Pf(\Omega)$ is an exact differential form.
\end{proposition}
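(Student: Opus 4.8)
The plan is to carry out Chern's transgression argument: we exhibit an explicit global $(m-1)$-form $\Phi$ on $M$, manufactured from the connection and curvature forms of local orthonormal frames adapted to $\xi$, and show that $d\Phi = \Pf(\Omega)$.

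First, replace $\xi$ by the unit vector field $\zeta = \xi/\|\xi\|$; this changes nothing in the hypothesis. Near each point choose a positively oriented orthonormal moving frame $e_1, \dots, e_{m-1}, e_m = \zeta$ (possible since $\zeta$ is a unit field), with dual coframe $\theta^i$, connection forms $\omega_{ij} = -\omega_{ji}$ and curvature forms $\Omega_{ij}$ satisfying the structure equations and Bianchi identities recalled in the appendix (in particular $d\omega_{im} = \Omega_{im} - \sum_{j} \omega_{ij}\wedge\omega_{jm}$ and $d\Omega_{ij} = \sum_k\big(\Omega_{ik}\wedge\omega_{kj} - \omega_{ik}\wedge\Omega_{kj}\big)$). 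For $0 \le k \le \tfrac{m}{2}-1$ introduce the $(m-1)$-form
$$
\Phi_k = \sum_{\sigma \in S_{m-1}} (-1)^{\sigma}\, \omega_{\sigma_1, m}\wedge \cdots \wedge \omega_{\sigma_{2k+1}, m}\wedge \Omega_{\sigma_{2k+2},\sigma_{2k+3}}\wedge\cdots\wedge\Omega_{\sigma_{m-2},\sigma_{m-1}},
$$
the sum being over permutations of $\{1,\dots,m-1\}$, and set $\Phi = \sum_{k=0}^{m/2-1} c_k\,\Phi_k$ for constants $c_k$ to be determined (ratios of double factorials). Since $\omega_{\,\cdot\, m}$ and $\Omega_{ij}$ transform tensorially under an $SO(m-1)$ change of the local completion $e_1,\dots,e_{m-1}$ of $\zeta$, while the antisymmetrizing sum over $S_{m-1}$ plays the role of the invariant Levi--Civita symbol, each $\Phi_k$ is independent of that choice and hence defines a genuine global $(m-1)$-form on $M$.

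Next I would compute $d\Phi_k$ using the two identities above. The expected mechanism — and this is the technical heart of the argument — is that after antisymmetrization over $S_{m-1}$ every term containing a connection form $\omega_{ij}$ with both $i,j\le m-1$ cancels, the remaining terms reorganize so that the piece of $d\Phi_k$ carrying $2k+3$ differentials $\omega_{\,\cdot\,m}$ matches (up to a combinatorial factor) a piece of $d\Phi_{k+1}$, and with the $c_k$ chosen to make these contributions telescope, what survives is exactly the purely curvature expression $\tfrac{1}{m!!}\sum_{\sigma\in S_m}(-1)^\sigma\,\Omega_{\sigma_1\sigma_2}\wedge\cdots\wedge\Omega_{\sigma_{m-1}\sigma_m} = \Pf(\Omega)$, the index $m$ re-entering through the $\Omega_{\,\cdot\,m}$ factors produced by $d\omega_{\,\cdot\,m}$. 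As $\Pf(\Omega)$ is frame-independent by \eqref{eq.prepfaff}, this local identity $d\Phi = \Pf(\Omega)$ is in fact global, which proves the Proposition.

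The main obstacle is precisely the bookkeeping in this last step: verifying that the unwanted $\omega_{ij}$ ($i,j<m$) contributions really cancel under the antisymmetrization, and pinning down the constants $c_k$ so that the surviving pieces telescope with the normalization $1/m!!$ in front of $\Pf(\Omega)$. It is purely combinatorial — identities among double factorials and signatures of permutations — but delicate. (Equivalently one can run this computation once and for all on the unit tangent bundle $\pi\colon SM\to M$, where $\Phi$ is the canonical transgression form with $\pi^*\Pf(\Omega) = d\Phi$, and then pull back along the section $\zeta\colon M\to SM$, obtaining $\Pf(\Omega) = \zeta^*\pi^*\Pf(\Omega) = d(\zeta^*\Phi)$; this repackages, but does not avoid, the same combinatorial identity, and it connects the top term $\Phi_{m/2-1}$ with the index formula of Example~\ref{Ex.IndexQ}.)
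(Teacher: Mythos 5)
Your strategy is exactly the paper's (Chern's): pass to $\zeta=\xi/\|\xi\|$, work in adapted frames with $e_m=\zeta$, build the frame\-/independent $(m-1)$-forms $\Phi_k$ (your indexing is just a relabeling of \eqref{ChernPhik}), and telescope their exterior derivatives to hit $\Pf(\Omega)$. But as written the proposal stops at the point where the proposition actually gets proved: you state as an ``expected mechanism'' that (i) the terms of $d\Phi_k$ containing $\omega_{ij}$ with $i,j\le m-1$ cancel, (ii) the surviving pieces satisfy a recursion linking consecutive $\Phi_k$'s, and (iii) constants $c_k$ can be chosen so that the combination telescopes to $\tfrac{1}{m!!}\sum_\sigma(-1)^\sigma\Omega_{\sigma_1\sigma_2}\wedge\cdots\wedge\Omega_{\sigma_{m-1}\sigma_m}$ — and you explicitly defer all three. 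These are not routine bookkeeping to be waved at: they are the content of the proposition, and without them nothing has been shown to be exact. Concretely, what is missing is the identity $d\Phi_k=\tfrac{m-2k-1}{2(k+1)}\Psi_k+\Psi_{k-1}$ (in the paper's indexing), obtained from $d\omega_{im}=\Omega_{im}-\sum_s\omega_{is}\wedge\omega_{sm}$ and the second Bianchi identity, the proof that the residual term $\Xi_k$ vanishes, and the inversion of the recursion giving the coefficients \eqref{formulelambda}--\eqref{Calcula1} together with the normalization $\Psi_{\frac{m}{2}-1}=m!!\,\Pf(\Omega)$.

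Two remarks that would let you close the gap efficiently. First, the cancellation in (i) does not have to be fought combinatorially: because each $\Phi_k$ coincides with the intrinsic expression \eqref{dePhik} in terms of $R$, $h$ and $\varpi_\zeta$ (or, as you argue, by $SO(m-1)$-equivariance of the antisymmetrized sum), $\Phi_k$ and hence $d\Phi_k$ are independent of the completion $e_1,\dots,e_{m-1}$ of $\zeta$; since the $\omega_{ij}$ with $i,j<m$ transform inhomogeneously under such frame changes (Lemma \ref{lem.chgframeomega}) while everything else transforms tensorially, the aggregate of those terms must vanish identically — this is exactly how the paper disposes of $\Xi_k$. Second, the constants are not chosen in advance to make things telescope; rather one solves the triangular system $\Psi_k=\sum_r\lambda_{m,k,r}\,d\Phi_r$ coming from the recursion, which in particular exhibits $\Psi_{\frac{m}{2}-1}=m!!\,\Pf(\Omega)$ as exact and yields the explicit primitive $\Phi=\sum_r a_{m,r}\Phi_r$ needed later for the boundary term in Theorem \ref{th.CGB}. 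Your closing remark about working on $SM$ and pulling back by the section $\zeta$ is a correct alternative packaging (it is Chern's original one, cf.\ \S\ref{sec.compchern}), but, as you note yourself, it does not dispense with this computation.
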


We can rephrase this Proposition by expressing that the Gauss--Bonnet density $\mathcal{R}$ is a divergence. An immediate consequence is  the following corollary, which is a special case of the Gauss--Bonnet Formula:
\begin{corollary}
 Let $M$ be a compact oriented  manifold without boundary of even dimension $m$. If $M$ admits an everywhere  non vanishing vector field $\xi$,  then the top global Lipshitz--Killing  curvature of $M$ vanishes.
\end{corollary}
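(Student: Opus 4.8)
The plan is to deduce the corollary directly from Proposition~\ref{Primitive} by an application of Stokes' theorem. First I would check that the hypotheses of Proposition~\ref{Primitive} are met: $M$ is oriented, of even dimension $m$, and by assumption carries an everywhere non-vanishing vector field $\xi$. The proposition then provides a \emph{globally defined} $(m-1)$-form $\Phi$ on $M$ with $\Pf(\Omega) = d\Phi$ (concretely, $\Phi$ is built from the connection and curvature forms of a frame adapted to $\zeta = \xi/\|\xi\|$, in an $\mathrm{SO}(m-1)$-invariant way, so that it does not depend on the chosen adapted frame). Next, combining the identity $\RR\,\dvol_g = \Pf(\Omega)$ of \eqref{eq.prepfaff} with the description \eqref{intRR} of the top Lipschitz--Killing curvature, I would write
\[
  \mathcal{K}_m(M) \;=\; \int_M \RR\,\dvol_g \;=\; \int_M \Pf(\Omega) \;=\; \int_M d\Phi .
\]
Since $M$ is compact with empty boundary and $\Phi$ is a smooth $(m-1)$-form on all of $M$, Stokes' theorem gives $\int_M d\Phi = \int_{\partial M}\Phi = 0$, whence $\mathcal{K}_m(M) = 0$.

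There is essentially no obstacle at this stage: all the real work is contained in Proposition~\ref{Primitive} (the construction of the transgression form $\Phi$ and the computation $d\Phi = \Pf(\Omega)$), and the corollary is simply its integrated form. The one point worth emphasizing is that the primitive $\Phi$ supplied by the proposition is defined on the whole of $M$, not merely on coordinate patches — this is exactly what makes Stokes applicable and the integral vanish. Finally, it is instructive to note that the corollary is consistent with, and indeed foreshadows, the full Gauss--Bonnet formula $\mathcal{K}_m(M) = (2\pi)^{m/2}\chi(M)$: by the Poincaré--Hopf Theorem~\ref{Theorem_Poincare_Hopf}, a nowhere-zero vector field on a closed manifold forces $\chi(M) = 0$, so the top curvature integral must vanish — which is precisely what we have just proved by purely differential-form means.
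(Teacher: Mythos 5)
Your proof is correct and follows exactly the paper's argument: invoke Proposition~\ref{Primitive} to write $\mathcal{K}_m(M)=\int_M \RR\,\dvol_g=\int_M \Pf(\Omega)=\int_M d\Phi$ and conclude by Stokes' theorem on the closed manifold $M$. The remark that the global (frame-independent) nature of $\Phi$ is what makes Stokes applicable is a fair point of emphasis, and the concluding consistency check with Poincaré--Hopf is harmless extra commentary.
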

Indeed, by Stoke's Theorem we have $\displaystyle \mathcal{K}_m(M,g) = \int_M \RR  \dvol_{{g}} = 0$.

\medskip 

The rest of his section is devoted to proving the above  Proposition.

\medskip 

\textbf{Proof.}  
Following Chern's ideas, we use the non-vanishing vector field $\xi$ to construct an $(m-1)$-form $\Phi$ that satisfies $\Pf(\Omega) = d\Phi$. To achieve this, we first  introduce the  double form $h = h_\zeta \in \mathcal{D}^{1,1}(M)$ of type $(1,1)$ on $M$, defined as 
\begin{equation}\label{eq.defh}
  h(X,Y) =   \bra  \nabla_X \zeta, Y\ket,
\end{equation}
where $\zeta$ is the unit vector field   $\zeta = \xi/|\xi|$.
Next we define a collection of $(m-1)$-forms $\Phi_k$, for $0  \leq k <  \frac{m}{2}$, as follows:
\begin{equation}\label{dePhik}
\Phi_k =   - \frac{2^k}{(m-1)!} \cdot \Ct^{m-1} \left( R^k \owedge h^{m-1-2k} \right) \varpi,
\end{equation}
where $\varpi =   \varpi_{\zeta}$ is defined in \eqref{def.varpi}. Additionally, we set  $\Phi_{-1} = 0$ for convenience.

\medskip

We will now build a primitive of $\Pf(\Omega)$   as a linear combination of the  $\Phi_k$s. To perform the necessary computations, we choose a positively oriented moving frame $e_1, \dots, e_m$ such that $e_m = \zeta$. This is always possible  in a neighborhood  $U\subset M$ of any point. Referring to Definition \ref{defomegai}, we  then have  for any $1 \leq i, j \leq m-1$ 
$$
  h(e_i,e_j) =   g(\nabla_{e_i}e_m, e_j) =  \omega_{jm}(e_i),
$$
that is, 
\begin{equation}\label{eq.homega}
 h = \sum_{j=1}^{m-1} \omega_{jm}\otimes \theta^j.
\end{equation}
We will now use some formulas established in the appendix.  The first one is \eqref{normalvarpi}, which tells us that  
$$
  \varpi = - \theta^{1} \wedge \cdots \wedge \theta^{m-1}, 
$$
because  $m= \dim(M)$ is even. The second formula  is stated in  Lemma \ref{lem:formule_R}:
$$
 R =  \frac{1}{2} \sum_{i,j=1}^m \Omega_{ij} \otimes \theta^i \wedge \theta^j.
$$
We therefore have 
\begin{align*}
&  R^k \owedge h^{m-1-2k}  =
\\ &
  \frac{1}{2^k} \sum_{\mu_1,\ldots,\mu_{m-1} = 1}^{m-1}  
  \Omega_{\mu_1,\mu_2} \wedge \dots \wedge \Omega_{\mu_{2k-1}\mu_{2k}} \wedge  
\omega_{\mu_{2k+1},m}\wedge \cdots \wedge \omega_{\mu_{m-1},m} \otimes  
\theta^{\mu_1} \wedge \cdots \wedge \theta^{\mu_{m-1}}.
\end{align*}
Applying now Proposition \ref{prop.contractnform2}, we obtain the following local expression for the $(m-1)$-form $\Phi_k$: 
\begin{equation}\label{ChernPhik}
 \Phi_k =  \sum_{\sigma\in S_{m-1}} (-1)^{\sigma} {\Omega}_{\sigma_1, \sigma_2} \wedge\ldots\wedge {\Omega}_{\sigma_{2k-1},\sigma_{2k}} \wedge\omega_{\sigma_{2k+1},m}\wedge\ldots\wedge\omega_{\sigma_{m-1},m}.
\end{equation}
This is  how the form $\Phi_k$  appears in Chern's paper  \cite{Chern1944,Chern1945},  and we will now  closely  follow Chern's   computation of  its  exterior derivative.
 Because a 2-form commutes with any other differential form, we can write the exterior derivative of $\Phi_k$ as 
\begin{align*}
 d\Phi_k & = k\sum_{\sigma\in S_{m-1}} (-1)^{\sigma} d{\Omega}_{\sigma_1, \sigma_2} \wedge \Omega_{\sigma_3, \sigma_4} \wedge \ldots\wedge {\Omega}_{\sigma_{2k-1},\sigma_{2k}} \wedge\omega_{\sigma_{2k+1},m}\wedge\ldots\wedge\omega_{\sigma_{m-1},m}
\\ &
 + (m-2k-1)  \sum_{\sigma\in S_{m-1}} (-1)^{\sigma} {\Omega}_{\sigma_1, \sigma_2} \wedge\ldots\wedge {\Omega}_{\sigma_{2k-1},\sigma_{2k}} \wedge d\omega_{\sigma_{2k+1},m}\wedge \omega_{\sigma_{2k+2},m}\wedge \ldots\wedge\omega_{\sigma_{m-1},m}.
\end{align*}
We next use the second Structure Equation  \eqref{StructuresEq} and the  Bianchi identity \eqref{Bianchi1et2}. These identities allow us to rewrite  $d\Omega_{\sigma_{1},\sigma_{2}}$ and $d\omega_{\sigma_{k+1},m}$  as 
$$
 d\omega_{\sigma_{k+1},m} =  \Omega_{\sigma_{k+1},m} - \sum_{s=1}^m \omega_{\sigma_{k+1},s} \wedge \omega_{s,m}, 
$$
and 
$$
  d\Omega_{\sigma_{1},\sigma_{2}}  =  \sum_{s=1}^m \left(\Omega_{\sigma_{1},s}\wedge \omega_{s,\sigma_{2}} - \omega_{\sigma_{1},s} \wedge  \Omega_{s,\sigma_{2}}\right)
=
 \sum_{s=1}^m \left(  \Omega_{\sigma_{2},s} \wedge \omega_{\sigma_{1},s} - \Omega_{\sigma_{1},s}\wedge \omega_{\sigma_{2},s}  \right). 
$$
Carefully computing,\footnote{See \cite[Lemma 3.14]{Marcone} for a detailed explanation.} one then  finds that 
$$
	 d\Phi_k = \frac{m-2k-1}{2k+2}\Psi_k + \Psi_{k-1}  + \Xi_k,
$$
where $\Psi_k$ is the $m$-form
\begin{align*}
	\Psi_k = (-1)^{k+1} (2k+2) \cdot \sum_{\sigma\in S_{m-1}} (-1)^{\sigma} {\Omega}_{\sigma_1, \sigma_2} \wedge\ldots\wedge {\Omega}_{\sigma_{2k-1},\sigma_{2k}} \wedge\Omega_{\sigma_{2k+1},m} \wedge\omega_{\sigma_{2k+2},m},  \wedge\ldots\wedge\omega_{\sigma_{m-1},m}
\end{align*}
and $\Xi_k$ is a linear combinations of products of curvature and connection forms containing some $\omega_{ij}$ with $1 \leq i,j \leq (m-1)$.
The forms $\Psi_k$ are defined for $0 \leq k \leq \frac{m}{2}-1$, but  it is convenient to also define $\Psi_{k-1} = 0$. 
 
\medskip

A key fact about the previous formula,  observed by Chern, is that we must have  $\Xi_k = 0$.  The reason is that the forms $\Phi_k$, and hence $d\Phi_k$, do not depend on the chosen moving frame $e_1, \dots, e_m$, except for the constraint $e_m = \zeta$. Lemma \ref{lem.chgframeomega} implies therefore that all terms in $d\Phi_k$  containing some $\omega_{ij}$ with $1 \leq i,j \leq (m-1)$ must cancel each other. 
We have thus established the following relation:
\begin{equation}\label{chernidentity}
	d\Phi_k =  \frac{m-2k-1}{2(k+1)}\Psi_k  + \Psi_{k-1}.
\end{equation}
This identity gives us $\Psi_k$ as a linear combination of the $d\Phi_r$s. Let us indeed write
\begin{equation}\label{chernidentity2a}
  \Psi_k = \sum_{r=0}^k  \lambda_{m,k,r} \, d\Phi_r,
\end{equation}
where the unknown coefficients $\lambda_{m,k,r}$ are to be determined. From  \eqref{chernidentity} we have
$$
  d\Phi_k = \sum_{r=0}^k  \left(\frac{m-2k-1}{2(k+1)} \lambda_{m,k,r}+ \lambda_{m,k-1,r} \right) d\Phi_r. 
$$
The condition on  $\lambda_{m,k,r}$ is then
$$
   \frac{m-2k-1}{2(k+1)} \lambda_{m,k,r} + \lambda_{m,k-1,r} = \delta_{k,r},
 \ \text{ with   $\lambda_{m,k,r}= 0$ for $r > k$,} 
$$
which is equivalent  to 
\begin{equation}\label{formulelambda}
 \lambda_{m,k,r} =  (-1)^{k-r}\prod_{j=r}^k \frac{2j+2}{m-2j-1}.
\end{equation}
We have established that each $\Psi_k$ is an exact form. Considering the particular case $k= \frac{m}{2}-1$, we have 
\begin{align*}
 \Psi_{\frac{m}{2}-1} &=  
 	m\cdot \sum_{\sigma\in S_{m-1}} (-1)^{\sigma} {\Omega}_{\sigma_1, \sigma_2} \wedge\ldots\wedge {\Omega}_{\sigma_{m-1},\sigma_{m}} 
\\& =\sum_{\sigma\in S_{m}} (-1)^{\sigma} {\Omega}_{\sigma_1, \sigma_2} \wedge\ldots\wedge {\Omega}_{\sigma_{m-1},\sigma_{m}}
\\& = m!!  \, \Pf(\Omega),
\end{align*}
and we conclude that $\Pf(\Omega)$ is an exact form.

\qed

\bigskip 

\textbf{Remarks  1.}  The differential forms $\Psi_k$ have been defined using the connection and curvature forms of a particular moving frame, 
however Formula \eqref{chernidentity2a} implies that the  $\Psi_k$ do not depend on the chosen frame, provided $e_m = \zeta$. 
 
\medskip

\textbf{2.}  The  proof gives us an explicit formula for the primitive of the Pfaffian. We have 
\begin{equation}\label{FinalPhi}
   \Pf(\Omega) = d\Phi , \quad \text{with} \quad  \Phi  =   \sum_{r=0}^{\frac{m}{2}-1}  a_{m,r}  \Phi_r,
\end{equation}
where  the $a_{m,r}$ can be computed   from \eqref{formulelambda}:
\begin{equation}\label{Calcula1}
  a_{m,r}  =  \frac{1}{m!!}  \,  
\prod_{j=r}^{\frac{m}{2}-1}\frac{2j+2}{m-2j-1}
 =
 {\frac {1}{(2r)!! \, \left( m-2\,r-1 \right) !!}}.
\end{equation}
The coefficients  $a_{m,r}$  can be written in several ways, which all appear in the literature,. For instance\footnote{These identities hold for $m$ even and $r< {\frac{m}{2}}$.}
\begin{equation}\label{Calculaa2}
 a_{m,r} =  {\frac { 1 }{{2}^{r}r!\, \left( m-2\,r-1 \right) !!}}
= 
  \frac{2^{\frac{m}{2}-2r}\left( \frac{m}{2}-r\right)!}{  (m-2r)! \,  r!}
=
 {\frac {\sqrt {\pi}}{2^{\frac{m}{2}}\,  r!\,\Gamma \left( \frac{m-2\,r+1 }{2} \right) }}.
\end{equation}

\bigskip 
 
\subsection{Harvesting the ripe  Gauss--Bonnet  Fruit} 
\label{sec.Chern-Gauss--Bonnet_Bdy}
With the hard work behind us, we are now in a position to state and prove the Gauss--Bonnet Formula, as originally formulated by Chern (see Equation (19) in \cite{Chern1945}). 
\index{Gauss--Bonnet--Chern Formula}
\begin{theorem}\label{th.CGB}
The Euler characteristic of a  compact Riemannian manifold $(M,g)$ of even dimension $m$ with (possibly empty) boundary has the following integral representation:
$$
  (2\pi)^{\frac{m}{2}}  \chi(M) = \int_{M} \Pf(\Omega) - \int_{\partial M} \Phi.
$$
\end{theorem}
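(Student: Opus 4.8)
The plan is to combine Chern's transgression result (Proposition \ref{Primitive}), the Poincaré–Hopf Theorem, and the Gauss–Bonnet–Chern formula for closed manifolds \eqref{GaussBonnetLK} via a doubling/excision argument. First I would pick a vector field $\xi$ on $M$ that is outward-pointing and transverse along $\partial M$ and has only isolated zeroes $x_1,\dots,x_k$ in the interior; the Poincaré–Hopf Theorem gives $\sum_i \Ind(\xi,x_i)=\chi(M)$. On the open set $M\setminus\{x_1,\dots,x_k\}$ the field is non-vanishing, so Proposition \ref{Primitive} (applied with $\zeta=\xi/|\xi|$) yields an explicit $(m-1)$-form $\Phi$ — the one in \eqref{FinalPhi} built from the $\Phi_r$ — with $\Pf(\Omega)=d\Phi$ there.

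Next I would apply Stokes' Theorem on the excised manifold $M_\varepsilon = M\setminus\bigcup_i B(x_i,\varepsilon)$, whose boundary is $\partial M$ (with induced orientation) together with the spheres $\partial B(x_i,\varepsilon)$ taken with reversed orientation. Using $\int_M\Pf(\Omega)=\int_{M_\varepsilon}\Pf(\Omega)+O(\varepsilon^{?})$ (the integrand is smooth across the small balls, so this is just continuity of the integral as $\varepsilon\to 0$), Stokes gives
\[
 \int_{M_\varepsilon} \Pf(\Omega) = \int_{\partial M} \Phi - \sum_{i=1}^k \int_{\partial B(x_i,\varepsilon)} \Phi.
\]
The crux is then to show that $\displaystyle\lim_{\varepsilon\to 0}\int_{\partial B(x_i,\varepsilon)}\Phi = (2\pi)^{\frac m2}\,\Ind(\xi,x_i)$. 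Here the dominant contribution comes from the single term $r=0$ in $\Phi$, namely $a_{m,0}\Phi_0$, which up to the universal constant is the pullback of the volume form of $S^{m-1}$ under the Gauss-type map $\zeta$; the curvature-containing terms $\Phi_r$ with $r\ge 1$ carry extra factors of the curvature $2$-forms $\Omega_{ij}$, which are $O(1)$ while the geometry of $\partial B(x_i,\varepsilon)$ contributes a vanishing factor, so those terms tend to $0$. Matching the constant $a_{m,0}=1/(m-1)!!$ against $\Vol(S^{m-1}) = 2\pi^{m/2}/\Gamma(m/2)$ and the normalization in Example \ref{Ex.IndexQ} should produce exactly $(2\pi)^{m/2}$, consistent with the closed-manifold case where $\partial M=\emptyset$ forces $\int_M\Pf(\Omega)=(2\pi)^{m/2}\chi(M)$ and recovers \eqref{GaussBonnetLK}.

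Assembling the pieces: $\int_M\Pf(\Omega) - \int_{\partial M}\Phi = -\sum_i\lim_\varepsilon\int_{\partial B(x_i,\varepsilon)}\Phi = -(2\pi)^{m/2}\sum_i\Ind(\xi,x_i) = -(2\pi)^{m/2}\chi(M)$ — wait, the sign bookkeeping must be done carefully: the reversed orientation of the inner spheres and the sign conventions in \eqref{dePhik} (note the leading minus) should conspire to give the stated $(2\pi)^{m/2}\chi(M) = \int_M\Pf(\Omega) - \int_{\partial M}\Phi$. The main obstacle is precisely this local limit computation near a zero of $\xi$: one must verify that only the $r=0$ piece of $\Phi$ survives in the limit and that its limit computes the index with the correct universal constant. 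This is essentially the local Gauss–Bonnet statement; a clean way to handle it is to choose, near each $x_i$, a convenient model (e.g. normal coordinates and a linear vector field) and use the fact already recorded in Example \ref{Ex.IndexQ} that $\Ind(\xi,x_i)=\lim_{\varepsilon\to0}\frac{1}{\Vol(S^{m-1})}\int_{S_\varepsilon}\det(L)\dvol_{S_\varepsilon}$, identifying $\det(L)\dvol_{S_\varepsilon}$ with the pullback of the volume form of the sphere and hence with $-a_{m,0}^{-1}\Phi_0$ up to curvature terms of lower order in $\varepsilon$.
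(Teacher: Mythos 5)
Your proposal follows essentially the same route as the paper's proof: a vector field with isolated interior zeroes that is outward pointing along $\partial M$, Proposition \ref{Primitive} to get $\Pf(\Omega)=d\Phi$ away from the zeroes, Stokes on $M_\varepsilon$, the observation that only the $\Phi_0$ term survives in the limit over the small spheres (giving $a_{m,0}(m-1)!\,\Vol(S^{m-1})\Ind(\xi,x_i)=(2\pi)^{\frac m2}\Ind(\xi,x_i)$ via Example \ref{Ex.IndexQ}), and finally Poincaré--Hopf. The only cosmetic difference is that the paper takes $\xi$ orthogonal (not merely transverse) to $\partial M$, which is the convention under which the boundary form $\Phi$ in the statement is interpreted; otherwise the argument is the paper's.
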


\begin{proof}
For the proof, we choose  a smooth vector field $\xi$ on $M$ with the following properties:
\begin{enumerate}[(i)]
\item $\xi$ has only isolated singularities at  $x_1, \dots, x_q \in M\setminus \partial M$.
\item At any boundary point,   $\xi$ is non vanishing, outward pointing  and orthogonal to $\partial M$.
\end{enumerate}
On the manifold $M' = M\setminus \{x_1, \dots, x_q\}$ we define the normalized vector field $\zeta = \xi/\|\xi\|$ and the double form   $h(X,Y) =  \bra X, \nabla_Y \zeta\ket$.   

\smallskip  

Let us then  denote by $B_i(\varepsilon)$ the open ball of radius $\varepsilon$ centered at $x_i$. We assume $\varepsilon > 0$ small enough so that the  balls $B_i(\varepsilon)$ are pairwise disjoint and have  empty intersection with $\partial M$,  and we set
$
 M_\varepsilon = M \setminus \cup_{i=1}^q B_i(\varepsilon).
$
The unit vector field $\zeta = \xi/|\xi|$ is well defined on $M_{\varepsilon}$ and we have by Stokes Theorem:
$$
 \int_{M_\varepsilon} \Pf(\Omega) =  \int_{\partial M} \Phi  \  -  \  
 \sum_{i=1}^q  \int_{\partial B_i(\varepsilon)} \Phi,
$$
where the form $\Phi$ has been defined in \eqref{FinalPhi},  see also \eqref{dePhik}. Chern  observes then  that
$$
 \lim_{\varepsilon \to 0} \int_{\partial B_i(\varepsilon)} \Phi_k = 0
$$
for $k \geq 1$. Furthermore,  from the definition \eqref{dePhik} of $\Phi_0$ and  the formula \eqref{eQ.index} representing the index, 
we see that 
\begin{align*}
 \lim_{\varepsilon \to 0}  \left( - \int_{\partial B_i(\varepsilon)} \Phi_0 \right) 
&=  \lim_{\varepsilon \to 0}  \frac{1}{(m-1)!}\cdot  
\mathcal{Q}_0(h \mid {\partial B_\varepsilon},g)
=  (m-1) !  \Vol (S^{m-1}) \Ind (\xi, x_i).
\end{align*}
We have thus established  that
\begin{align*}
 \int_{M} \Pf(\Omega) - \int_{\partial M} \Phi   &= 
 \lim_{\varepsilon \to 0} \,  \sum_{i=1}^q  \left( -a_{m,0}\right) \int_{\partial B_i(\varepsilon)} \Phi_0 
 \\ & = 
\lim_{\varepsilon \to 0} \,  \sum_{i=1}^q \frac{a_{m,0}}{(m-1)!}\cdot  
\mathcal{Q}_0(h \mid {\partial B_\varepsilon},g)
\\&=  a_{m,0} (m-1) !  \Vol (S^{m-1})  \sum_{i=1}^q\Ind (\xi, x_i).
\end{align*}

Recall that $a_{m,0} = \dfrac{1}{(m-1)!!}$. Furthermore,  since  $m$ is even, we have $\Vol(S^{m-1}) = \dfrac{(2\pi)^{{\frac{m}{2}}}}{(m-2)!!}$, which leads to the following relationship:
\[
a_{m,0} (m-1)! \, \Vol(S^{m-1}) = (2\pi)^{\frac{m}{2}}.
\]
Applying the Poincaré--Hopf Theorem, we finally conclude that  
\begin{align*}
   \int_{M} \Pf(\Omega) - \int_{\partial M} \Phi    =    (2\pi)^{\frac{m}{2}}   \sum_{i=1}^{q} \Ind(\xi)  =  (2\pi)^{\frac{m}{2}}  \chi(M).
\end{align*}
\end{proof}

\medskip

Let us pause and take a short moment to listen to Chern's own account of his discovery,  as he recalls it in an interview with Allyn Jackson in 1998 (see \cite{Interview}). 

\smallskip  

\begin{quotation}
\fontfamily{libertine}\selectfont 
\begin{minipage}{0.8\textwidth} 
{\small 
The Gauss--Bonnet formula is one of the important, fundamental formulas, not only
in differential geometry, but in the whole of mathematics. Before I came to Princeton [in 1943] I had thought about it, so the development in Princeton was in a sense very natural. I came to Princeton and I met André Weil. He had just published his paper with Allendoerfer. Weil and I became good friends, so we naturally discussed the Gauss--Bonnet formula. And then I got my proof. I think this is one of my best works, because it solved an important, a fundamental, classical problem, and the ideas were very new. And to carry out the ideas you need some technical ingenuity. It’s not trivial. It’s not something where once you have the ideas you can carry it out. It is subtle.  So I think this a very good piece of work.
}
\end{minipage}
\end{quotation}

\subsection{Demystifying the Gauss--Bonnet Boundary term using Double Forms}
\label{sec.Chern-Gauss--Bonnet_Bis}

In this section, we use the formalism of double forms to introduce two alternative formulations of the Gauss--Bonnet Formula for a compact, oriented Riemannian manifold $(M, g)$ with  non-empty boundary and  even dimension $m$.
The first reformulation involves  the curvature tensor $R$ of the oriented Riemannian manifold $(M, g)$, coupled with the second fundamental form $h$ of its boundary. The formula can be expressed as follows:
 
\begin{equation}\label{GaussBonnetQB}
   (2\pi)^{\frac{m}{2}}  \chi(M) =  \mathcal{K}_m(M) +
  \sum_{k=0}^{\frac{m}{2}-1} b_{m,k}\,   \mathcal{Q}_k(R,h \mid \partial M, \bar{g}).
\end{equation} 
Here  $\bar{g}$  is the first  fundamental form of $\partial M$ as a hypersurface in  $(M,g)$.
We will  prove this formula  and determine the coefficients $b_{m,k}$ below. 

\medskip

We work with the orientation of  the boundary $\partial M$  induced by the unit outer normal field $\nu \in T(\partial M)^{\perp}$.  This means that a basis $v_1, \ldots, v_{m-1}$ of $T_p(\partial M)$
is positively oriented if and only if $\nu, v_1, \ldots, v_{m-1}$ is a positively oriented basis in $T_p M$. 
Referring to \eqref{normalvarpi}, we see that,  under this orientation, the volume form on the boundary  is given by
$$
  \dvol_{\partial M} = \varpi_{\nu}.
$$
We next choose  a vector field $\zeta$ of length $1$, defined in some neighborhood of $\partial M \subset M$ and such that 
$\zeta$  is outward pointing  and orthogonal to $\partial M$.  Observe that $\zeta= \nu$ at every boundary point $p\in \partial M$. 
We then define $\Phi_k$ and $h$ as previously, that is,  
\begin{align*}
  h(X,Y)  =  \bra X, \nabla_Y \zeta\ket
     \quad \text{and } \quad 
 \Phi_k = - \frac{2^k}{(m-1)!} \cdot \Ct^{m-1} \left( R^k \owedge h^{m-1-2k} \right) \varpi_\zeta,
\end{align*}
and observe that the restriction of $h$ to the boundary coincides with  its second fundamental form.\footnote{In the literature, the second fundamental form is frequently defined using the opposite sign convention. The present  convention yields positive principal curvatures for the boundary of the unit ball in Euclidean space.}

\smallskip 
 
From Definition \ref{defQ},   we then have
\begin{align*}
   \mathcal{Q}_k(R,h \mid \partial M) &=  \frac{1}{(m-1)!\,  k ! \, (m-2k-1)!} \int_{\partial M} \Ct^{m-1}({R}^k \owedge h^{m-2k-1}) \dvol_{\partial M}
  \\&=    \frac{-1}{2^k\,  k ! \, (m-2k-1)!} \int_{\partial M} \Phi_k.
\end{align*}
So if we now define the coefficients $b_{m,k}$ by the following relation: 
$$
 b_{m,k}\,   \mathcal{Q}_k(R,h \mid \partial M) = - a_{m,k}\, \int_{\partial M} \Phi_k,
$$
we   conclude that 
\begin{align*}
  \mathcal{K}_m(M) +   \sum_{k=0}^{\frac{m}{2}-1} b_{m,k}\,   \mathcal{Q}_k(R,h \mid \partial M, \bar{g})
   =  \int_M \Pf(\Omega) \dvol_M - \int_{\partial M} \Phi =  (2\pi)^{\frac{m}{2}}  \chi(M). 
\end{align*}
Formula  \eqref{GaussBonnetQB} is proved. 
Furthermore, the coefficients are calculated as follows:
\begin{equation}\label{Calculbb}
    {b_{m,k}}  =   a_{m,k} \cdot {2}^{k} \,  k! \, \left( m-2\,k-1 \right)!  = \frac{\left( m-2\,k-1 \right)!}{\left( m-2\,k-1 \right)!!} 
    =    (m-2 k-2)!!
\end{equation}

\medskip

The second reformulation of the Gauss--Bonnet Formula expresses the boundary term using the intrinsic curvature tensor $\bar{R}$ of the boundary hypersurface

\begin{equation}\label{GaussBonnetQC}
   \mathcal{K}_m(M) +
  \sum_{k=0}^{\frac{m}{2}-1} c_{m,k}\,   \mathcal{Q}_k(\bar{R},h \mid \partial M, \bar{g})
    =     (2\pi)^{\frac{m}{2}}  \chi(M),
\end{equation} 
where, again, the coefficients $c_{m,k}$ are to be determined. 
Using  the Gauss Equation 
$$R = \bar{R} - \frac{1}{2} h\owedge h, $$
 together with  the linearity of the contraction, the fact that $\bar{R}$ and $h \owedge h$ commute and  the binomial formula, we see that the following relation holds on the boundary:
$$
 R^p  \owedge h^{m-1-2p} = \sum_{k=0}^p \binom{p}{k} \left( -\tfrac{1}{2}\right)^{p-k} 
\bar{R}^k  \owedge h^{m-1-2k},
$$
therefore
$$
 \Ct^{m-1}\left( R^p  \owedge h^{m-1-2p}\right)  = 
\sum_{k=0}^p \binom{p}{k}   \left( -\tfrac{1}{2}\right)^{p-k}  \,  \Ct^{m-1}\left(\bar{R}^k  \owedge h^{m-1-2k}\right).
$$
From \eqref{eq.defQ}   and the above relation  we obtain: 
$$
 \mathcal{Q}_p\left(R,h \mid\partial M, \bar{g}\right) 
=  \sum_{k=0}^p   w_{m,p,k}  \mathcal{Q}_k(\bar{R},h \mid\partial M, \bar{g}),
$$
where
$$
   w_{m,p,k}   =    \left( -\tfrac{1}{2}\right)^{p-k}  \frac{(m-1-2k)!}{(m-1-2p)! \, (p-k)!}  
$$
if $0 \leq k \leq p$. It will be convenient to also define  $ w_{m,p,k}=0$ if $k>p$. We then have  
$$
  \sum_{p=0}^{\frac{m}{2}-1} b_{m,p}  \mathcal{Q}_p\left(R,h \mid\partial M, \bar{g}\right) 
 =  \sum_{k=0}^{\frac{m}{2}-1} c_{m,k} \mathcal{Q}_p\left(\bar{R},h \mid\partial M, \bar{g}\right)),
$$
where $c_{m,k}$ is defined as 
$$
  c_{m,k}  = \sum_{p=k}^{\frac{m}{2}-1}  w_{m,p,k}b_{m,p}.
$$
We have thus proved that \eqref{GaussBonnetQB} holds with 
\begin{align}\label{eq.cmk}
 c_{m,k}  &=  (m-2k-1)!  \sum_{p=k}^{\frac{m}{2}-1}  (-1)^{p-k}   
 \frac{( m-2p-2)!! }{ 2^{p-k}(m-2p-1)! \, (p-k)!}  \nonumber 
\\&=   (-1)^{\frac{m}{2}  -k -1} (m-2k-3)!!
\end{align}

Synthesizing our findings, we arrive at  the following formulations of the Gauss--Bonnet--Chern Formula:
\index{Gauss--Bonnet--Chern Formula}

\begin{equation}\label{NewGBC}
\boxed{
\begin{aligned}
  (2\pi)^{\frac{m}{2}}  \chi(M) &=  \int_M \Pf(\Omega) -  \sum_{k=0}^{\frac{m}{2}-1} a_{m,k} \int_{\partial M} \Phi_k 
 \\
   &=  \mathcal{K}_m(M) + \sum_{k=0}^{\frac{m}{2}-1} b_{m,k}\,   \mathcal{Q}_k(R,h \mid \partial M, \bar{g}) \\
   &= \mathcal{K}_m(M) +
 \sum_{k=0}^{\frac{m}{2}-1} c_{m,k}\,   \mathcal{Q}_k(\bar{R},h \mid \partial M, \bar{g}),
\end{aligned}
}
\end{equation}

where the coefficients are
\begin{align*}
    a_{m,k} &=  \frac{ 1 }{ (2k)!!\, \left( m-2\,k-1 \right) !!}\\
    b_{m,k} &= (m-2k-2)!! \\
    c_{m,k} &=  (-1)^{\frac{m}{2}-k-1}  (m-2k-3)!!
\end{align*}
We also remind the reader that 
$$
   \int_M \Pf(\Omega) =  \int_M \RR \dvol_M =  \mathcal{K}_m(M).
$$

\medskip 

\begin{remark}\rm 
Perhaps one  ought to  assign a name to the tensors  $R^k \owedge h^{m-1-2k}$ appearing in the preceding formulas. 
Although the term \textit{Generalized Gauss--Kronecker Curvature} comes to mind, this terminology has already been adopted in \cite{Kulkarni} for the tensor $R^k$. An alternative suggestion could be the  \textit{Gauss--Kronecker--Chern Curvatures}.

\smallskip 

It is noteworthy that the boundary term in the Gauss--Bonnet Formula appears as the integral of a weighted homogeneous polynomial of degree $m-1$ in $R$ (the curvature tensor) and $h$ (the second fundamental form), with weights of $1$ assigned to $h$ and $2$  assigned to $R$.

\smallskip 

Another point that might be worth the effort would be to try and find a more intuitive explanation for the values of the coefficients $b_{m,k}$ and $c_{m,k}$ (in particular the presence of double factorials here is no coincidence).
\end{remark}

\section{Examples and Applications}

\subsection{Flat Manifolds.} \label{sec.flat}

Recall that a Riemannian manifold $(M,g)$ is \textit{flat} if its curvature tensor satisfies $R = 0$. Equivalently, every point $x\not\in \partial M$ admits a neighborhood isometric to an open subset of $\R^n$. 

\begin{theorem}\label{th.flatcase}
If $(M,g)$ is a compact flat oriented  Riemannnian manifold with boundary of dimension $m$, then 
$$
  \int_{\partial M} K \dvol_{\partial M} = \Vol(S^{m-1}) \, \chi(M),
$$
where $K = \det(h_{ij})$ is the Gauss--Kronecker curvature of the boundary $\partial M$.
\end{theorem}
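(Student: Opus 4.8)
The plan is to apply the Gauss--Bonnet--Chern Formula in the form \eqref{GaussBonnetQC} (the third line of \eqref{NewGBC}) and exploit the flatness hypothesis $R = 0$. Since $M$ is flat, the curvature form $\Omega$ vanishes identically, so $\Pf(\Omega) = 0$ and consequently $\mathcal{K}_m(M) = \int_M \Pf(\Omega) = 0$. Thus the bulk term drops out entirely and the formula collapses to
$$
 (2\pi)^{\frac{m}{2}} \chi(M) = \sum_{k=0}^{\frac{m}{2}-1} c_{m,k}\, \mathcal{Q}_k(\bar{R}, h \mid \partial M, \bar{g}).
$$
Now I would observe that flatness of $M$ together with the Gauss Equation $R = \bar{R} - \tfrac12 h \owedge h$ forces $\bar{R} = \tfrac12 h \owedge h$ on $\partial M$; that is, the intrinsic curvature of the boundary is entirely determined by its second fundamental form. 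This is exactly the situation of Proposition \ref{GK_intrinseque} applied to $\partial M$, which is an $(m-1)$-dimensional hypersurface behaving, as far as curvature is concerned, like a hypersurface in Euclidean space.

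The cleanest route from here is to avoid disentangling the individual coefficients $c_{m,k}$ and instead go back one step, to the form \eqref{GaussBonnetQB} with the curvature tensor $R$ of $M$ rather than $\bar R$. Since $R = 0$ on all of $M$ (in particular on $\partial M$), every term $\mathcal{Q}_k(R, h \mid \partial M, \bar g)$ with $k \geq 1$ vanishes, because its integrand $\Ct^{m-1}(R^k \owedge h^{m-1-2k})$ contains a factor $R^k = 0$. Only the $k=0$ term survives, giving
$$
 (2\pi)^{\frac{m}{2}} \chi(M) = b_{m,0}\, \mathcal{Q}_0(R, h \mid \partial M, \bar{g}) = (m-2)!!\, \mathcal{Q}_0(h \mid \partial M, \bar g),
$$
using $b_{m,0} = (m-2)!!$ from \eqref{Calculbb}. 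By the special case of Definition \ref{defQ} (the $k=0$ formula), $\mathcal{Q}_0(h \mid \partial M, \bar g) = \int_{\partial M} \det(h_{ij})\, \dvol_{\partial M} = \int_{\partial M} K\, \dvol_{\partial M}$, since $K = \det(h_{ij})$ is by definition the Gauss--Kronecker curvature of the boundary.

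It remains only to check the numerical constant: I must verify that $(m-2)!! = \Vol(S^{m-1})/(2\pi)^{m/2}$ when $m$ is even. This is precisely the identity $\Vol(S^{m-1}) = (2\pi)^{m/2}/(m-2)!!$ already used in the proof of Theorem \ref{th.CGB} (it follows from the formula for the volume of even-dimensional spheres recorded in \S\ref{sec.volumeSd}), so dividing through by $(2\pi)^{m/2}$ and multiplying by $\Vol(S^{m-1})$ yields the claimed equality $\int_{\partial M} K\, \dvol_{\partial M} = \Vol(S^{m-1})\,\chi(M)$. I do not anticipate any genuine obstacle here; the only point requiring a little care is making sure the sign and double-factorial bookkeeping in $b_{m,0}$ and in the sphere-volume formula match up, which they do. (As a sanity check, for $m=2$ this reads $\int_{\partial M} k_g\, ds = 2\pi\chi(M)$, the classical Gauss--Bonnet statement for a flat surface with boundary, since $0!! = 1$ and $\Vol(S^1) = 2\pi$.)
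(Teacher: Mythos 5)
Your argument is correct, but only for the case where $m$ is even, and in that case it is essentially the paper's own proof: since $R=0$ one has $\mathcal{K}_m(M)=0$ and $\mathcal{Q}_k(R,h\mid\partial M,\bar g)=0$ for all $k\geq 1$, so the second formulation of \eqref{NewGBC} collapses to the $k=0$ term with $b_{m,0}=(m-2)!!$, and the identity $\Vol(S^{m-1})=(2\pi)^{m/2}/(m-2)!!$ gives the claim. (Your opening detour through the $c_{m,k}$ formulation and the Gauss equation is harmless but unnecessary, as you yourself then abandon it.)

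The genuine gap is that the theorem is stated for a compact flat manifold of arbitrary dimension $m$, even \emph{or odd}, and your proof does not address the odd case. The Gauss--Bonnet--Chern formula \eqref{NewGBC} you invoke is only established for even-dimensional $M$ (the Pfaffian, the forms $\Phi_k$ and the coefficients $a_{m,k}$, $b_{m,k}$ all presuppose $m$ even), so the argument cannot be applied when $m$ is odd; and the odd case is not vacuous, since $\chi(M)$ need not vanish for a manifold with boundary (a ball in $\R^3$ has $\chi=1$), and it is precisely the case needed for Corollary \ref{cor.normaldegree2} when $m$ is odd. The paper treats it separately: if $m$ is odd, then $N=\partial M$ is a \emph{closed even-dimensional} manifold, and flatness of $M$ together with the Gauss equation forces $\bar R=\tfrac12\,h\owedge h$ on $N$, so the computation of Proposition \ref{GK_intrinseque} identifies $K=\det(h_{ij})$ with the intrinsic Gauss--Bonnet density of $N$ up to the explicit constant in \eqref{eq.RisK}; the closed-manifold Gauss--Bonnet formula, in the form \eqref{GBHopf2}, then yields $\int_N K\,\dvol_N=\tfrac12\Vol(S^{m-1})\,\chi(N)$, and one concludes using $\chi(\partial M)=2\,\chi(M)$, which holds when $\dim M$ is odd. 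To complete your proposal you would need to add this second branch of the argument (or some substitute for it).
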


\begin{proof} The proof splits in two cases according to whether $m$ is even or odd. 
Consider first the case where $m$ is even,. We have $\mathcal{K}_m(M) = 0$ since $R = 0$. We also have    
$\mathcal{Q}_k(R,h \mid \partial M, \bar{g}) = 0$ for any $k\geq 1$. Because  $b_{m,0} = (m-2)!!$,   the second formula in \eqref{NewGBC} reduces thus to
$$
 \int_{\partial M} K \dvol_{\partial M} =  \mathcal{Q}_0(h \mid \partial M) = \frac{(2\pi)^{{\frac{m}{2}}}}{(m-2)!!} \,  \chi(M)
=  \Vol(S^{m-1}) \chi(M).
$$ 
If $m$ is odd, then $N = \partial M$ is an even-dimensional manifold and we know from  Equation \eqref{GBHopf2} that 
$$
 \int_{N} K \dvol_{N}  = \frac{1}{2} \Vol(S^{m-1})\, \chi(N)
$$
(see also Proposition \ref{GK_intrinseque}). 
We  conclude the proof with  the relation $\chi(\partial M) = 2 \chi(M)$, which holds when $m = \dim (M)$ is odd.
\end{proof}

\medskip
 
An interesting outcome of this theorem is the following generalization of Corollary \ref{cor.normaldegree}.
Suppose that an immersion $f : M \to \R^m$ is given, where 
$M$ is a smooth compact manifold of (even or odd) dimension $m$. We can then prescribe an orientation on $M$ by requiring $f$ to be a positively oriented map and then define the Gauss map $\nu : \partial M \to S^{m-1}$. 
\begin{corollary}\label{cor.normaldegree2}
The degree of the Gauss map $\nu   : \partial M \to S^{m-1}$ is equal to the Euler characteristic of $M$:
$$
 \deg(\nu) = \chi(M).
$$
\end{corollary}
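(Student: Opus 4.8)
The plan is to deduce this from Theorem~\ref{th.flatcase} by pulling back the flat Euclidean metric along the immersion $f$ and identifying the boundary Gauss--Kronecker curvature with the Jacobian of the Gauss map. First I would equip $M$ with the metric $g = f^*(g_{\mathrm{eucl}})$; since $f$ is an immersion and $M$ is compact, this is a smooth Riemannian metric on $M$, and it is flat because locally $f$ is an isometric embedding of a piece of $M$ into $\R^m$. So Theorem~\ref{th.flatcase} applies and gives
$$
 \int_{\partial M} K \dvol_{\partial M} = \Vol(S^{m-1})\, \chi(M),
$$
where $K = \det(h_{ij})$ is the Gauss--Kronecker curvature of the hypersurface $\partial M \subset (M,g)$.

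Next I would relate $K$ to the Gauss map. Since $f$ is an immersion with $\dim M = m$ into $\R^m$, its restriction to a neighborhood of $\partial M$ realizes $\partial M$ as an immersed cooriented hypersurface in $\R^m$ with unit normal the pushforward of $\zeta = \nu$ (the outward normal to $\partial M$ in $M$). The second fundamental form $h$ of $\partial M$ in $(M,g)$ coincides, via $f$, with the Euclidean second fundamental form of this immersed hypersurface, because $f$ is locally an isometry onto an open subset of $\R^m$ and the Levi-Civita connection of $g$ corresponds to the flat connection of $\R^m$. Hence $K = \det(h_{ij})$ is exactly the Jacobian of the Gauss map, i.e. $K \dvol_{\partial M} = \nu^*(\omega)$ with $\omega = \dvol_{S^{m-1}}$, exactly as in the computation preceding Corollary~\ref{cor.normaldegree} for the embedded case. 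Integrating,
$$
 \int_{\partial M} K \dvol_{\partial M} = \int_{\partial M} \nu^*(\omega) = \deg(\nu)\,\Vol(S^{m-1}).
$$
Comparing the two displayed identities and dividing by $\Vol(S^{m-1}) \neq 0$ yields $\deg(\nu) = \chi(M)$.

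The main obstacle I anticipate is purely bookkeeping about signs and orientations: one must check that the orientation induced on $\partial M$ by the outward normal, together with the chosen coorientation of $\partial M$ as a hypersurface in $\R^m$, makes the degree of $\nu$ come out with the correct sign (and not $(-1)^{\text{something}}\deg\nu$), and that the sign convention for $h$ used in Theorem~\ref{th.flatcase} — the one for which the unit ball in $\R^m$ has positive principal curvatures — matches the convention under which $\det(h_{ij})\dvol = \nu^*\omega$ with the outward Gauss map. Once the conventions are aligned (they are, by the footnote in \S\ref{sec.Chern-Gauss--Bonnet_Bis}), the argument is immediate; the only subtlety beyond signs is noting that $f$ being merely an immersion rather than an embedding causes no trouble, since $K$, $h$, and $\nu$ are all defined pointwise on $\partial M$ via the local diffeomorphism $f$.
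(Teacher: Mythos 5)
Your proposal is correct and follows exactly the paper's own route: pull back the Euclidean metric by the immersion $f$ to get a flat metric, invoke Theorem~\ref{th.flatcase}, and identify the boundary Gauss--Kronecker curvature $K=\det(h_{ij})$ with the Jacobian of the Gauss map so that $\int_{\partial M}K\dvol_{\partial M}=\deg(\nu)\Vol(S^{m-1})$. You simply spell out the orientation/sign bookkeeping and the immersion-versus-embedding point that the paper leaves implicit.
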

\begin{proof}
Observe first that the pullback $g$ of the Euclidean metric by the map  $f : M \to \R^m$ is a flat metric on $M$.  
The proof is now obvious from  Theorem \ref{th.flatcase} since the Gauss--Kronecker curvature is the Jacobian of the Gauss map.
\end{proof}

This result  was   independently discovered in 1960 by  A. Haefliger and H. Samelson, see  \cite[Theorem 3bis]{Haefliger1960} and  \cite[Theorem 1]{Samelson}. In  the special case where $M$ is a domain in $\R^n$, H. Hopf had already established it  in \cite[Satz VI.]{Hopf1926}. Some historical context  is discussed in  Gotlieb's essay \cite{Gottlieb1996}.

\subsection{Manifolds with Flat Boundary}

Consider now the case of a compact oriented  Riemannian manifold $(M,g)$, of even dimension $m$, whose boundary is flat, that is, $\bar{R} = 0$. The second formula in \eqref{NewGBC} tells us that 
\begin{equation}\label{GBFlatbdy}
 \int_M \RR \, \dvol_M   +  \int_{\partial M} K \dvol_{\partial M}  = (2\pi)^{{\frac{m}{2}}}\chi(M),
\end{equation}
where $\RR$ is the Gauss--Bonnet density of $(M,g)$ and  $K = \det(h_{ij})$ is the Gauss--Kronecker curvature of the boundary $\partial M$. 

When $m=2$, this  recovers  the classical  Gauss--Bonnet Formula for compact surfaces with smooth boundary (up to a change in notation). In this case, we have indeed  $\bar{R} = 0$ because  the boundary is  a one-dimensional manifold.

\subsection{Space Forms} \label{sec.spaceform}

The Gauss--Bonnet density of a manifold  $(M,g)$ of constant sectional curvature $a$ and even dimension $m$ is a constant, equal to 
\begin{equation}\label{GBSpaceForm}
 \RR = \left(\tfrac{a}{2}\right)^{\frac{m}{2}} \frac{m!}{\left(\frac{m}{2} \right)! }
 =
   \frac{ (2\pi a)^{\frac{m}{2}}}{\frac{1}{2}\Vol(S^m)}.
\end{equation}

If $M$ is compact,  then the Gauss--Bonnet Formula immediately  implies the following identity, which was already proved in 1925 by Hopf in \cite{Hopf1925a}:
\begin{equation}\label{spaceform}
 a^{\frac{m}{2}}   \Vol(M) = \frac{1}{2}\Vol(S^m) \chi(M).
\end{equation}

In particular, when $a>0$, the Euler characteristic $\chi(M)$ is  positive, it vanishes in the flat case ($a=0$), and its sign is $(-1)^{\frac{m}{2}}$ when $a<0$. 
Note  that Equation \eqref{spaceform} holds trivially for flat manifolds, while it becomes almost self-evident for spherical manifolds due to the fact that the universal cover of such a manifold is a sphere.
A geometric proof, based on  a geodesic triangulation, is presented in \cite[Theorem 11.3.2]{Ratcliffe2019}.
Let us add a few comments on this result:

\begin{enumerate}[(1)]
\item The formula \eqref{GBSpaceForm} also works for the case of \textit{non compact, complete hyperbolic manifolds of finite volume} of even dimension. This was first proved by  R. Kellerhals and T. Zehrt \cite{Kellerhals1984} using an ideal geodesic triangulation of the manifold. It can also be seen as a consequence of \eqref{GBFlatbdy}.
Here is a brief  explanation of why this holds: such a manifold is the union of a compact manifold with horospherical boundary and a finite collection of cusps. Each connected component of the boundary is a compact flat manifold with constant Gauss--Kronecker curvature $K = +1$ (in fact we have  $h = \bar{g}$, equivalently all the principal curvature are equal to $+1$). Progressively chopping off the manifold at increasing distances, it becomes evident that $\int_{\partial M} K \dvol_{\partial M} \to 0$ and we conclude by \eqref{GBFlatbdy}.
\item  Our considerations prove that the  volume of a complete  hyperbolic manifold of even dimension $m$ must be an  integer multiple of $\frac{1}{2}\Vol(S^m)$. Furthermore, if the manifold has dimension $4$, we know from the work of  J. Ratcliffe and T. Tschantz  \cite{RatcliffeTschantz} that every positive multiple of $\frac{4}{3}\pi^2$ is realized as the volume of a complete 4-dimensional hyperbolic manifold.
\item The situation is very different for odd-dimensional hyperbolic manifolds. A fundamental result by  W. Thurston and  O. Jørgensen,    based on a construction involving hyperbolic Dehn surgeries, shows that the set of volumes of complete hyperbolic 3-manifolds is a closed and non-discrete subset of $\R$, containing infinitely many accumulation points,
see  \cite{Thurston} and  \cite{Gromov1981}.
\end{enumerate}

\subsection{Further results on non Compact Manifolds}

An almost  immediate consequence of \eqref{GaussBonnetQC} is  the following 

\begin{theorem}
Let $(M, g)$ be a non-compact Riemannian manifold of even dimension $m$ and finite topological type. Assume $g$ has an integrable Gauss--Bonnet density, and that  there exists an exhausting sequence
$$
M_1 \subset \cdots \subset M_{j} \subset M_{j+1} \subset M = \bigcup_{j=1}^{\infty} M_{j},
$$
such that each $M_{j}$ is a compact $m$-dimensional manifold with smooth boundary. If
\begin{equation}\label{cond.exhaustion}
\int_{\partial M_{j}} \left( \|\bar{R}_x\|^k\|h_x\|^{m-1-2k}\right) \dvol_g \to 0
\quad \text{as } j \to \infty
\end{equation}
for any integer $k$ such that $0 \leq k \leq  \frac{m}{2}-1$, then the Gauss--Bonnet Formula holds; that is, we have
\begin{equation}\label{gbkm}
(2\pi)^{\frac{m}{2}} \chi(M) = \mathcal{K}_m(M, g).
\end{equation}
\end{theorem}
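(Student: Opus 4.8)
The plan is to reduce the non-compact statement to the compact Gauss--Bonnet Formula \eqref{GaussBonnetQC} applied to each $M_j$ and then pass to the limit. First I would write down \eqref{GaussBonnetQC} for the compact manifold with boundary $M_j$:
$$
   (2\pi)^{\frac{m}{2}}  \chi(M_j) = \mathcal{K}_m(M_j, g) +
  \sum_{k=0}^{\frac{m}{2}-1} c_{m,k}\,   \mathcal{Q}_k(\bar{R},h \mid \partial M_j, \bar{g}).
$$
Since $M$ has finite topological type and the $M_j$ form an exhaustion, for $j$ large enough the inclusion $M_j \hookrightarrow M$ is a homotopy equivalence, hence $\chi(M_j) = \chi(M)$ for all large $j$; this handles the left-hand side.

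Next I would deal with the two terms on the right. For the curvature term, integrability of the Gauss--Bonnet density $\RR$ means $\RR \dvol_g \in L^1(M)$, so by dominated convergence (or simply monotone convergence on the increasing sets $M_j$, after splitting $\RR$ into positive and negative parts) we get $\mathcal{K}_m(M_j,g) = \int_{M_j} \RR \dvol_g \to \int_M \RR \dvol_g = \mathcal{K}_m(M,g)$. For the boundary term, I would bound each summand in absolute value: by the definition \eqref{eq.defQ} of $\mathcal{Q}_k$ and the Cauchy--Schwarz-type pointwise estimate $|\Ct^{m-1}(\bar{R}^k \owedge h^{m-1-2k})| \leq C_{m,k} \|\bar{R}\|^k \|h\|^{m-1-2k}$ for a universal constant $C_{m,k}$ depending only on $m$ and $k$ (the contraction of a double form is controlled by the norms of its factors, the norm being the one induced by $g$ on the relevant tensor spaces), one obtains
$$
  \left| \mathcal{Q}_k(\bar{R},h \mid \partial M_j, \bar{g}) \right| \leq \frac{C_{m,k}}{(m-1)!\, k!\, (m-2k-1)!} \int_{\partial M_j} \|\bar{R}_x\|^k \|h_x\|^{m-1-2k} \dvol_g.
$$
Hypothesis \eqref{cond.exhaustion} forces the right-hand side to $0$ as $j\to\infty$, so the entire boundary sum vanishes in the limit. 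Combining the three limits yields \eqref{gbkm}.

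The main obstacle, and the step deserving the most care, is justifying the pointwise norm inequality $|\Ct^{m-1}(\bar{R}^k \owedge h^{m-1-2k})| \leq C_{m,k} \|\bar{R}\|^k \|h\|^{m-1-2k}$ with a constant that is genuinely universal (independent of the point and of $j$). This is where one must be careful that $\bar R$ and $h$ here are the intrinsic curvature and second fundamental form of $\partial M_j$, whose norms are computed with the induced metric $\bar g$; the double wedge product and the full contraction are both bounded multilinear operations on the finite-dimensional spaces of double forms on an $(m-1)$-dimensional vector space, so such a constant exists by a compactness argument on the orthonormal frame, but making this explicit (or at least clearly invoking it) is the technical heart of the argument. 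Everything else is a routine application of the compact formula together with standard convergence theorems for integrals over an exhaustion.
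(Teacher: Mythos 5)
Your proposal is exactly the argument the paper has in mind: the paper offers no detailed proof, merely remarking that the theorem is an ``almost immediate consequence'' of the compact formula \eqref{GaussBonnetQC}, and your three steps (apply \eqref{GaussBonnetQC} to each $M_j$, let the interior term converge by integrability of $\RR$, kill the boundary sum using \eqref{cond.exhaustion} via a pointwise multilinear bound on $\Ct^{m-1}(\bar R^k\owedge h^{m-1-2k})$) are precisely that reduction, with the norm estimate you single out being unproblematic since contraction and the double wedge product are fixed multilinear operations on finite-dimensional spaces, so the constant depends only on $m$ and $k$.

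The one step you should not wave through is the claim that finite topological type forces the inclusions $M_j\hookrightarrow M$ to be homotopy equivalences for large $j$, hence $\chi(M_j)=\chi(M)$. For an \emph{arbitrary} exhaustion by compact manifolds with boundary this is false: for instance $\R^4$ (interior of $B^4$, so of finite topological type) can be exhausted by alternately taking large balls and large balls with an open tubular neighborhood of an embedded $2$-sphere removed, and then $\chi(M_j)$ oscillates between $1$ and $-1$. Finitely generated homology only gives surjectivity of $H_*(M_{j})\to H_*(M)$ in the colimit, not injectivity, so it does not by itself stabilize $\chi(M_j)$. What your argument actually yields is that, under \eqref{cond.exhaustion} and the integrability of $\RR$, the integer sequence $\chi(M_j)$ converges and hence is eventually constant, equal to $(2\pi)^{-\frac m2}\mathcal K_m(M,g)$; to finish one must still identify this constant with $\chi(M)$, which requires either an additional argument exploiting the geometric hypotheses, or reading the theorem as implicitly assuming the exhaustion is adapted to the finite topological type (e.g.\ each $M_j$ a deformation retract of $M$ for large $j$). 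Since the paper states the result without proof, this gap is as much the paper's as yours, but in a written-out proof it is the point that needs honest treatment; the rest of your argument is routine and correct.
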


\smallskip

By an observation of M. Gromov, this result applies  to the case of complete Riemannian manifolds with pinched negative sectional curvature, that is, whose sectional curvature satisfies $-a \leq K \leq -b < 0$, see \cite[appendix 3]{Gromov1982}.  When the manifold has non-positive curvature, the control of the geometry at infinity is more delicate but some explicit asymptotic conditions have been given by  S. Rosenberg in \cite{Rosenberg}; see also \cite{Harder} for the case of arithmetic manifolds. The paper \cite{CheegerGromov} by Cheeger and Gromov contains a strikingly stronger result. \medskip

\subsection{The direct product of a Ball and a Sphere}
\label{ExampleBS}

Our next example will illustrate that, even in a very simple case,  keeping track of   the boundary terms in the Gauss--Bonnet--Chern Formula can be tedious. Nevertheless, this example holds significance as it validates the accurate values of the coefficients $b_{m,k}$ in Formula \eqref{NewGBC}.

\smallskip 

Let us denote the standard metric on $\R^{p+1}$ as $g_1$ and the standard metric on $\R^{q+1}$ as $g_2$. Their respective restrictions to the unit spheres $S^p$ and $S^q$ are denoted by $\bar{g}_1$ and $\bar{g}_2$. The Riemannian metric on the manifold $M$ is given by $g = g_1 + \bar{g}_2$, and its restriction to $\partial M$ is $\bar{g} = \bar{g}_1 + \bar{g}_2$. It is  important to note that, in the algebra of double forms, $\bar{g}_1^j = 0$ if $j > p$ and $\bar{g}_2^j = 0$ if $j > q$.

\medskip 

With this notation, one  can write the curvature tensor $R$ of $M$ as  $R = \frac{1}{2} \bar{g}_2 \owedge \bar{g}_2$ (because $B^{p+1}$ is  flat, all the curvature comes from $S^q$).
It follows immediately that the Gauss--Bonnet density defined in \eqref{eq.defRR}  vanishes on $M$:
\begin{equation*} 
  \RR = \tfrac {1}{m!\left(\frac{m}{2}\right)! } \, \Ct^m(R^{ {\frac{m}{2}}}) =  \tfrac {1}{2^m m!\left(\frac{m}{2}\right)! } \,  \Ct^m(\bar{g}_2^m) = 0,
\end{equation*}
because $m > q$. 
Furthermore,  the second fundamental form of the boundary $\partial M = S^p\times S^q$ is given by $h = \bar{g}_2$.  
We therefore have for any $0\leq k \leq \frac{1}{2}(p+q)$ 
$$
  R^k\owedge  h^{m-1-2k} = \left(\tfrac{1}{2}\right)^{k} \bar{g}_1^{2k} \owedge  \bar{g}_2^{m-1-2k},
$$
which vanishes if either $2k > p$ or $(m-1-2k) > q$. This implies that 
$$
  R^k\owedge  h^{m-1-2k} = \begin{cases}   \left(\tfrac{1}{2}\right)^{\frac{p}{2}} \bar{g}_1^{p} \owedge  \bar{g}_2^{q}, & \text{ if } p=2k,
 \vspace{0.14cm} \\
  \; 0  & \text{ else. } \end{cases}
$$
Note also that 
$$
  \bar{g}^{m-1} =  \bar{g}^{p+q}  =  \sum_{j=0}^{p+q} \binom{p+q}{j}  \,  \bar{g}_1^j \, \owedge \bar{g}_2^{p+q-j}
  = \frac{(p+q)!}{p! \, q!}   \bar{g}_1^p \,  \owedge \bar{g}_2^{q}.
$$
We thus have  $\mathcal{Q}_k(R, h \mid \partial M, \bar{g}) =0$ if $k\neq \frac{p}{2}$ and 
\begin{align*}
 \mathcal{Q}_{\frac{p}{2}} (R, h \mid \partial M, \bar{g}) 
&=  \frac{1}{(p+q) ! \, \left(\frac{p}{2}\right)  ! \, q! }  \, 
 \int_{\partial M}  \Ct^{p+q}\left( \left(\tfrac{1}{2}\right)^{\frac{p}{2}}\bar{g}_1^{p} \owedge  \bar{g}_2^{q} \right)  \dvol_{\partial M} 
\\ &
= \frac{1}{(p+q) ! \, \left(\frac{p}{2}\right)  ! \, q! }  \, 
 \int_{\partial M}  \Ct^{p+q}\left( \frac{p! \, q!}{2^{\frac{p}{2}} (p+q)!} \,   g^{p+q}  \right)  \dvol_{\partial M} 
\\&=
   \frac{p!}{ 2^{\frac{p}{2}}   \left(\frac{p}{2}\right)  ! }  \, \Vol(S^p) \Vol(S^q).
\end{align*}
The latter quantity can be explicitly computed. If  $p$ is even and $q$ odd, we have 
\begin{equation}\label{SpSq}
\Vol(S^p) \Vol(S^q)
 =  
    \frac{ 2^{\frac{p}{2}+1}   \left(\frac{p}{2}\right)  ! }{p! \, (q-1) !! }  \cdot  (2\pi)^{\frac{p+q+1}{2}}.
\end{equation}
We finally conclude that 
\begin{align*}
   \mathcal{K}_m(M) &  + \sum_{k=0}^{\frac{m}{2}-1} b_{m,k}\,   \mathcal{Q}_k(R,h \mid \partial M, \bar{g})   
\\& = 0 + 
 b_{p+q+1,\frac{p}{2}} \cdot \mathcal{Q}_{\frac{p}{2}} (R, h \mid \partial M, \bar{g}) 
\\ &=
(q-1) !!  \  \frac{p!}{ 2^{\frac{p}{2}}   \left(\frac{p}{2}\right)  ! }  \, \Vol(S^p) \Vol(S^q)
\\ &= 
 2 \, (2\pi)^{\frac{p+q+1}{2}}  
\\ &= 
  (2\pi)^{\frac{m}{2}}  \chi(M).
\end{align*}

\medskip 

\begin{remark} \rm 
We emphasize that this calculation is independent of the proof of Proposition \ref{Primitive}. Consequently, it offers an alternative approach to determining the exact value of the coefficients $b_{m,k}$. 
This example is inspired by the closing sentence in Volume 5 of Spivak's treatise (page 391). Spivak suggests working out the above computation and adds that  ``after performing the calculation, it should be fun to compare with Chern's paper''.
\end{remark}

\subsection{Rotationally Symmetric Metrics}
\label{rotmetric}  \index{Rotationally Symmetric Metrics}
 
In our last example, we  consider the case of a rotationally symmetric smooth Riemannian metric $g$ in a closed  ball  $B_r$  of radius $r$. 
Denoting  by $o\in B_m(r)$ its center, we may identify   ${B}^m\setminus \{o\}$ with $(0,r] \times S^{m-1}$, and the metric tensor can be represented as a warped product 
$$
 g = dt^2 + f(t)^2 \, g_0,
$$
where $g_0$ is the standard metric on the unit sphere $S^{m-1}$.
The second fundamental form and the   intrinsic curvature tensor of the boundary sphere $S_r = \partial B_r$  are then given by 
$$
  h = \frac{f'(r)}{f(r)}\bar{g}  = f'(r) f(r) \,  g_0  \quad \text{and} \quad 
 \bar{R} = \frac{g \owedge g}{2\, f(r)^2}  = \frac{1}{2} \, g_0 \owedge g_0.
$$
The boundary contribution to the Gauss--Bonnet Formula \eqref{GaussBonnetQB} appears as a sum of terms of the following type:
$$
    c_{m,k}\, \mathcal{Q}_k(\bar{R}, h \mid S_r, \bar{g})  = \gamma_{m,k} \Vol\left(S^{m-1}\right)  \,  (f'(r)) ^{m-2k-1}, 
$$
for some coefficient $\gamma_{m,k}$. The courageous reader will compute and find the value 
$$
 \gamma_{m,k} =  {\frac { (m-1) ! \,  c_{m,k}}{{2}^{k}k!\,  \left( m-1-2\,k \right) !}}   
 =  \frac { (-1)^{\frac{m}{2}-k-1} \left( m-1 \right) ! }{{2}^{\frac{m}{2}-k}  k!\,\left( m-2k-1 \right)  \left( \frac{m}{2}-k-1 \right) !}.
$$
The  Gauss--Bonnet Formula for $(B_r,g)$ is then the following identity:
\begin{equation}\label{GBspherical}
 \int_{B_r} \RR \dvol_g +  \Vol(S^{m-1}) \sum_{k=0}^{\frac{m}{2}-1}  \gamma_{m,k} \,  (f'(r)) ^{m-2k-1} = (2\pi)^m.
\end{equation}

Let us look more closely at  the cases  of  the  Euclidean, spherical  and  hyperbolic balls. 
In the Euclidean case, we have  $\RR = 0$ and $f(t) = t$. Then $f'(t) = 1$  and \eqref{GBspherical} reduced to the following summation of the coefficients $ \gamma_{m,k}$, which holds when $m$ is an even integer
\begin{equation}\label{sumgamma}
 \sum_{k=0}^{\frac{m}{2}-1}   \gamma_{m,k}   \ = \frac{(2\pi)^{\frac{m}{2}}  }{\Vol(S^{m-1})} =    
(m-2)!!  
\end{equation}
Consider next the ball of radius $r$ in the hyperbolic space $B_r \subset \mathbb{H}^m$ (with $m$ even). We have $f(r) = \sinh(r)$, and therefore \eqref{GBspherical} gives us 
\[
 \int_{B_r} \RR \dvol_g  =  \frac{(-2\pi)^{\frac{m}{2}}}{\frac{1}{2} \Vol(S^m)}
 \Vol_{_{\mathbb{H}^m}}(B_r) = 
 \frac{(-2\pi)^{\frac{m}{2}}\Vol (S^{m-1})}{\frac{1}{2} \Vol(S^m)} \int_0^r \left(\sinh(t)\right)^{m-1} dt.
\]
Using now \eqref{GBSpaceForm}, we see that the Gauss--Bonnet  Formula  \eqref{GBspherical} for the hyperbolic ball  is equivalent to the identity
\[
{\frac { (-1)) ^{\frac{m}{2}}  m!}{{2}^{\frac{m}{2}}  \left( \frac{m}{2} \right) !}}
\,  \int_{0}^{r}\!  \left( \sinh \left( t \right)  \right) ^{m-1}\,{\rm d}t
 \  + \ 
 \sum _{k=0}^{\frac{m}{2}-1}  \gamma_{m,k}
 \left( \cosh \left( r \right)  \right) ^{m-2\,k-1}
 \  = \ 
(m-2)!! 
\]
A direct  derivation of this identity (without invoking the Gauss--Bonnet Formula) can be achieved through an induction-based argument, starting from the relation:
$$
 (m-1) \int_0^r (\sinh(t))^{m-1}(t) dt = \cosh(r)\left(\cosh(r)^2-1\right)^{\frac{m}{2}-1} - (m-2)  \int_0^r (\sinh(t))^{m-3}(t) dt,
$$
which  is easily verified with an integration by parts.

\medskip 

The argument and the calculations are similar in the spherical case. The corresponding integral identity being
$$
{\frac {m!}{{2}^{\frac{m}{2}} \left( \frac{m}{2} \right) !}}
\,  \int_{0}^{r}\!  \left( \sin \left( t \right)  \right) ^{m-1}\,{\rm d}t
 \  + \ 
 \sum _{k=0}^{\frac{m}{2}-1}  \gamma_{m,k}
 \left( \cos \left( r \right)  \right) ^{m-2\,k-1}
 \  = \ 
(m-2)!! 
$$
We leave the detailed verification  to the interested reader. 

\section{Miscellaneous Remarks} 

\subsection{Remarks on Signs and Other Conventions}

In the literature, including in Spivak's books, the curvature tensor $R$  is often defined with the opposite sign. Nonetheless, we chose to follow the convention as in Gray's book \cite{Gray}, as it aligns well with the formalism of double forms.
However, the crucial point to note is that the signs in our Lemma \ref{lem:formule_R} match those in \cite{Spivak} and \cite{Willmore}.

\medskip

These references have also a different convention on  indices. They write  the connection and curvatures forms as
\begin{equation}\label{upindx}
 \omega^i_j = \omega_{ij} \quad \text{and} \quad \Omega^i_j = \Omega_{ij}.
\end{equation}
Our convention is justified by our choice to work with the covariant curvature tensor $R_{ijkl}$ rather than the mixed one $R^i_{\: ijkl}$. Observe however that  both index conventions yield the same quantities, as the general transformation rule
$$
 \Omega^i_j = \sum_k g_{ik}\, \Omega^k_{j} 
$$
reduces to \eqref{upindx} due to the orthonormality of the moving frame.

\subsection{Comparison with  Chern's Papers}  \label{sec.compchern}

We strongly encourage the reader to look into Chern's two original articles. Although not easily accessible, they are concise, elegant, and remarkably insightful. Both papers start with a succinct review of the moving frame method, in Élie Cartan's style.
In this approach, given a moving frame $Pe_1 \cdots e_m$, the following equations are posed:
\begin{eqnarray*}
    &dP = \sum_i \omega_i e_i, 
 \hspace{2cm} 
  &\Omega_{ij} = d\omega_{ij} - \sum_k \omega_{ik}\omega_{kj}, 
\\
   &de_i = \sum_j \omega_{ij}e_j,
 \hspace{2cm} 
    &d\Omega_{ij} = \omega_{ik}\Omega_{kj} - \omega_{jk}\Omega_{ki}.
\end{eqnarray*}

If, following Cartan, we interpret the symbol $P$ (the ``point $P$ itself'') as representing the identity map of the manifold to itself, then $dP$ is the identity map in the tangent space. The first equation tells us that $\omega_i = \theta^i$ is the dual coframe.
The second and third equations are the structure equations, with $de_i$ interpreted as $\nabla e_i$. The last equation represents the Bianchi identity.  Cartan, and  Chern after him, use these equations as \textit{definitions} of the connection and curvature forms. Note  however that these definitions yield the   opposite sign compared to what appears in the present paper:
$$
\omega_{ij}^{\text{Chern}} =  -\omega_{ij}, \qquad \Omega_{ij}^{\text{Chern}} =  -\Omega_{ij}.
$$
This differences in sign are reflected in several equations.  Another important difference is that Chern does not state our Proposition \ref{Primitive} concerning manifolds with a vector field. Instead, he works with the unit tangent bundle $SM$ of the manifold $M$ and proves that the pullback of the Pfaffian $\Pf(\Omega)$ is an exact form on $SM$. This operation is called a  \textit{transgression} of the corresponding differential form. The calculation is essentially the same as the one presented in our  proof of  Proposition \ref{Primitive}.

\smallskip 

Our presentation of the proof shows that working on the sphere bundle and transgressing the Pfaffian are not required    to work out Chern's argument and prove the intrinsic Gauss--Bonnet Formula. However, the concept  of transgression has played a central  role   in the theory of characteristic classes.

\subsection{Double Factorials and the volume of Spheres}
\label{sec.volumeSd}

Here we briefly discuss some  quantities that  have been used throughout  the paper.  \index{Double factorial}
The \textit{double factorial} is inductively defined for any integer $n \geq -1$ by the conditions $ (-1)!! = 0!! = 1$ and 
$$
  n!! = n(n-2)!!,
$$
for $n\geq 1$. We thus have for even integers:
$$
   n!! = n \cdot(n-2) \cdot(n-4) \cdots  2 = 2^{\frac{n}{2}} \cdot\left(\tfrac{n}{2}\right)!
$$
and for odd integers 
$$
   n!!   = n \cdot(n-2) \cdot(n-4) \cdot1 =  
\frac{(n+1)!}{2^{\frac{n+1}{2}} \cdot\left(\frac{n+1}{2}\right)!} = 
 \frac{2^{\frac{n+1}{2}} \,\Gamma\left(\tfrac{n+1}{2}\right) }{\sqrt{\pi}}, 
$$
where $\Gamma(z) = \int_{0}^{\infty} t^{z-1} e^{-t} \, dt$ \  is the Euler Gamma function. 

\medskip

The double factorial has the following simple combinatorial interpretation:
For  a set with $n$ elements,  the number of different  ways to form $\tfrac{n}{2}$  unordered pairs  if $n$ is even, or  $\tfrac{n-1}{2}$  unordered pairs with one element left unmatched if $n$ is odd, is given by
$$
 \begin{cases}
 (n-1) !! & \text{ if  $n$ is even,}  \\   \quad n!! & \text{ if  $n$ is odd.} 
\end{cases}
$$

\medskip

The other ubiquitous quantity is the volume (or should we call it the area?) of the unit sphere $S^d \subset \R^{d+1}$. It  can be written in several different formalisms, starting with 
$$
  \Vol(S^d) = 2\,{\frac {{\pi}^{\frac{d+1}{2}}}{\Gamma \left( \frac{d+1}{2} \right) }}.
$$
We see that for $d$ odd we have
$$
  \Vol(S^d) = 2\,{\frac {{\pi}^{\frac{d+1}{2}} }{ \left(\frac{d-1}{2}\right) !}}
= {\frac { \left( 2\,\pi \right) ^{\frac{d+1}{2}}}{ \left( d-1 \right) !!}},
$$
and for even $d$:
$$
  \Vol(S^d) = {\frac{{2}^{d+1}{\pi}^{\frac{d}{2}} \left( \frac{d}{2} \right) !}{d!}}
 = {\frac { 2 \left( 2\,\pi \right)^\frac{d}{2}  }{ \left( d-1 \right) !!}}.
$$

\section{Exercises}
\label{sec.exercices}

In conclusion of  this paper, we propose  a few exercises.   

\bigskip 
\textbf{1.} 
Verify that, in the case of a surface, each of the three formulations  \eqref{NewGBC} yields the classical  2-dimensional Gauss--Bonnet Formula  (this amounts essentially to identifying the boundary term as the integral of the geodesic curvature,  with the correct orientation).

\bigskip

\textbf{2.}  
\index{Pfaffian}
Recall that the Pfaffian of an \(m\times m\) matrix \(A\), where  $m$ is even, is the following sum involving $m!$ terms:
\[
 \Pf(A) = \frac{1}{m!!} \sum_{\sigma \in S_{m}} (-1)^{\sigma} a_{\sigma(1)\sigma(2)} \cdot a_{\sigma(3)\sigma(4)} \cdots  a_{\sigma(m-1)\sigma(m)}.
\]
In this excercice we will show that the  Pfaffian can be rewritten as a sum of only $(m-1) !!$ \  terms, each involving a different partition of the index set $\{1, \dots, m\}$ in disjoint pairs. To this aim, we consider the following subset 
$$
 \mathcal{P}_m = \{ \sigma \in S_m \mid \sigma_{2i+2} > \max\{ \sigma_{2i}, \sigma_{2i+1}\} , \;  1 \leq i <  \tfrac{m}{2}\}  
$$
of $S_m$.   Your task is to first verify  the following identity: 
$$
 \frac{1}{m!!}   |S_m|  = (m-1) !! = |\mathcal{P}_m|.
$$
Then prove that 
\[
 \Pf(A) = \sum_{\sigma \in \mathcal{P}_{m}} (-1)^{\sigma} a_{\sigma(1)\sigma(2)} \cdot a_{\sigma(3)\sigma(4)} \cdots  a_{\sigma(m-1)\sigma(m)}.
\]

\bigskip 

\textbf{3.} 
Use the Gauss--Bonnet formula to prove that if $M_1$ and $M_2$ are two closed manifolds, then $\chi(M_1 \times M_2) = \chi(M_1)  \chi(M_2)$. Extend this to the case where one of the manifolds has a non-empty boundary.

\bigskip 
\textbf{4.} 
Generalize the Gauss--Bonnet Formula to the case of a closed non-orientable manifold. Can this be done for manifolds with non-empty boundaries?

\bigskip 
\textbf{5.}
Verify equation \eqref{eq.homega} in the proof of Proposition  \ref{Primitive}.

\bigskip 
\textbf{6.}  Prove that if $N$ is a hypersurface in a Riemannian manifold $(M,g)$, then the Gauss Equation can be written as
$$
  \bar{R}  = R + \frac{1}{2}  g \owedge g,
$$
where $R$ is the curvature tensor of the ambient manifold $(M,g)$, $\bar{R}$ is the intrinsic curvature tensor of the submanifold $N$ and 
$h$ is the second fundamental form of $N$.

\smallskip

Two proofs (at least)  are possible. One proof is to reduce the above formula to the Gauss Equation expressed in a formalism familiar to the reader. The other is to try and write a direct proof using the moving frame formalism.

\bigskip 
\textbf{7.} 
Let $M$ be a 4-dimensional manifold. Use formula  \eqref{ChernPhik} to  calculate the forms $\Phi_0$, $\Phi_1$, $\Psi_0$, and $\Psi_1$.  Find then an explicit  primitive of the Pfaffian in this case (follow the steps in the proof of Proposition \ref{Primitive}).

\bigskip 
\textbf{8.} Our proof of the Gauss--Bonnet--Chern Formula uses the Poincaré--Hopf theorem. Show that, in fact, these two results are equivalent.

The argument will proceed in two steps. First, reverse the proof of Theorem \ref{th.CGB} to demonstrate that the sum of the  indices of a vector field is an invariant of the manifold. Then, identify this invariant by selecting a well-chosen vector field (for example, the gradient of a Morse function).

\bigskip 
\textbf{9.} Let $h_1,h_2 \in \mathcal{D}^{1,1}(M)$ be two double forms of type $(1,1)$ and set $A = h_1\owedge h_2$. Show that if $h_1$ and $h_2$ are symmetric, then $A$ satisfies the \emph{first Bianchi identity:}  \index{Bianchi's identities}
$$
 A(X,Y;U,V)+A(Y,U;X,V)+A(U,X;Y,V) = 0.
$$

\bigskip 
\textbf{10.} 
With the help of a computer algebra system such as Maple, Mathematica, or SageMath, verify the validity of some, or all, of the following  identities appearing in the present  paper:
\eqref{Calcula1},  \eqref{Calculaa2}, \eqref{Calculbb},  \eqref{eq.cmk},  \eqref{SpSq}   and  \eqref {sumgamma}. 
After mastering this task, you should be able to navigate through the computational details of Example  \ref{rotmetric}.

\bigskip 
\textbf{11.} 
In  Example \ref{ExampleBS} we have  discussed the Gauss--Bonnet Formula for the manifold $M = B^{p+1}\times S^q$ assuming $p$ is even and $q$ is odd. What happens if,  instead,  $p$ is odd and $q$ is even?

 
\appendix

\section{Appendix: Some Background in Differential Geometry}

\label{sec.bacground} 

In this appendix, we review some of the geometric concepts employed in this paper and establish our notation. Additionally, we provide a detailed introduction to the moving frame method.

\subsection{Some Linear Algebra on the (co)tangent Space} 
\label{secLinAlg}

By its very definition, a Riemannian metric $g$ on a smooth manifold $M$ endows each tangent space with a Euclidean structure. The scalar product of two tangent vectors $X$ and $Y$ in $T_pM$ will be denoted interchangeably as $g_p(X,Y)$ or $\langle X, Y \rangle_p$, and the Riemannian norm will be represented by $\|X\|_p = \sqrt{\langle X, Y \rangle_p}$. The subscript $p$ will be omitted when there is no ambiguity. The same notation will be used when $X$ and $Y$ are smooth vector fields over $M$; in this case, the scalar product becomes a smooth function on $M$.

\medskip 

To any tangent vector $X\in T_pM$, we associate a covector $X^\flat \in T_p^*M$ defined by the relation $X^\flat (Y) = \bra X, Y\ket$ for any $Y\in T_pM$. This yields a linear isomorphism from the tangent space $T_pM$ to the cotangent space $T_p^*M$, and the inverse isomorphism maps a covector  $\varphi \in T_p^*M$ to the unique  tangent vector $\varphi^\sharp \in T_pM$ such that $\bra \varphi^\sharp, Y\ket = \varphi (Y)$, again for any $Y\in T_pM$. The operators $\sharp$ and $\flat$ are sometimes called the \textit{musical isomorphisms}.\footnote{The name and notation have apparently been introduced by Marcel Berger in the 1970s.} 
A scalar product is then defined on  $\Lambda^k T^*_pM$, first for $k=1$ by 
$$
  \bra \alpha, \beta \ket =  \bra \alpha^\sharp, \beta^\sharp \ket 
$$
for any $\alpha, \beta  \in T^*_pM$, then by the formula: 
\begin{equation}\label{scalprodext}
   \bra \alpha_1\wedge \cdots \wedge \alpha_k, \beta_1\wedge \cdots \wedge \beta_k, \ket  = \det \left(  \bra \alpha_i, \beta_j \ket \right),
\end{equation}
where $\alpha_i, \beta_j \in T^*_pM$. The ``volume element'' at the point $p$ is the  unique element $\Theta = \Theta_p \in \Lambda^m T_p^*M$ representing the natural orientation and such that $\|\Theta\| = 1$. Its existence and uniqueness follows from the fact that 
$\dim \left( \Lambda^m T_p^*M\right)  = 1$. It  is characterized by the condition 
$$ 
 \Theta_p (e_1, \dots, e_m) = 1,
$$
for any positively oriented orthonormal basis $\{e_1, \dots, e_m\}$ of $T_pM$. 
The volume element is independent of the chosen moving frame, making it a globally defined $m$-form on the manifold. We will commonly use the notation:
$$
 \dvol_g = \Theta.
$$

This notation, however, does not assume that $\Theta$ is a closed form. Instead, we view it as a measure on the manifold. In local coordinates, it can be expressed as:
$$
 \dvol_g = \sqrt{\det (g_{ij})} \,  dx_1\cdots dx_m,
$$
where the $g_{ij}$ are the components of the metric tensor. 

\medskip 

We  recall that the \textit{Hodge star operator} is the isomorphism: $* : \Lambda^k T^*_pM \to \Lambda^{m-k} T^*_pM$ which is defined by the condition 
$$
  \alpha \wedge (*\alpha) = \|\alpha\|^2 \Theta.
$$
The dual basis of a positively oriented  orthonormal basis  $\{e_1, \dots, e_m\} \subset T_pM$ will be denoted by $\{\theta^1, \dots,\theta^m\} \subset T_p^*M$ of $T_pM$. It is given by  $\theta^i = e_i^\flat$ and we have 
$$
 \Theta = \theta^1\wedge \cdots \wedge \theta^m.
$$
It is easy to check that 
$$  
 * \theta^{i_1}\wedge  \cdots \wedge \theta^{i_k} = \varepsilon \;  \theta^{j_1}\wedge  \cdots \wedge \theta^{j_{n-k}}, 
$$
where $\{i_1, \dots, i_k, j_1, \dots, j_{m-k}\}$ is any permutation of $\{1, \dots, m\}$ and $\varepsilon=\pm 1$ is its signature. 
Furthermore, the collection
$$
  \{\theta^{i_1}\wedge  \cdots \wedge \theta^{i_k}\}_{i_1< i_2 < \cdots <i_k}
$$
forms an orthonormal basis of $\Lambda^k T^*_pM$.
Using these operators, we define the isomorphism $\varpi : T_pM \to \Lambda^{n-1} T^*_pM$ by\footnote{Using the interior product, one can also define it as $ \varpi_{_X} = \iota_{X} (\Theta)$.}
\begin{equation}\label{def.varpi}
 \varpi_{_X} = *(X^\flat).
\end{equation}
This isomorphism  will play an important role in the sequel; note that $\varpi_{_X}$ is also characterized by the condition
$$
  X^\flat \wedge   \varpi_{_X} = \|X\|^2 \, \Theta.
$$
As an important example, observe that if $\{e_1, \dots, e_m\}$ is a  positively oriented orthonormal basis of $T_pM$, then 
\begin{equation}\label{normalvarpi}
   \varpi_{{e_m}}  = (-1)^{m-1}\, \theta^{1}\wedge  \cdots \wedge \theta^{m-1}.
\end{equation}

\subsection{Connection and Curvature}

Riemannian geometry is of course much more than linear algebra in tangent spaces. To compare nearby tangent spaces effectively, we need the concept of  connection. In Kozsul's formalism, the Levi-Civita connection of  $(M,g)$  is  seen as the unique operator that associates to any pair of smooth vector fields $X,Y$ on $M$ a new vector field, denoted as $\nabla_XY$, and   satisfying the following conditions: \index{Levi-Civita connection}
\begin{enumerate}[(i)]
\item $\nabla_X(Y_1+Y_2) = \nabla_XY_1 + \nabla_XY_2$ \  and \ $\nabla_{X_1+X_2}Y = \nabla_{X_1}Y + \nabla_{X_2}Y$.
\item $\nabla_{fX}Y = f \nabla_XY$  \ and \  $\nabla_X(fY) = f \nabla_XY + X(f)Y$ for any $f \in C^{\infty}(M)$.
\item $\nabla_XY -   \nabla_YX = [X,Y]$ \  (the Lie bracket of $X$ and $Y$).
\item $Z \bra X,Y\ket  = \bra\nabla_ZX,Y\ket + \bra X,\nabla_ZY\ket$.
\end{enumerate}
We will denote by $R$ the associated covariant curvature tensor. \index{Curvature Tensor} It is the $(0,4)$-tensor field defined by
$$
   R(X,Y;Z,W) = \bra \nabla_{[X,Y]} Z + \nabla_Y\nabla_X Z - \nabla_X\nabla_Y Z,W\ket.
$$
Its basic properties are:   
\begin{enumerate}[(a)]
\item   $R$ is $C^{\infty}(M)$-multilinear.
\item   $R(X,Y,Z,W) = R(Z,W, X,Y) = - R(Y,X,Z,W) = - R(X,Y,W,Z)$.
\item   $R(X,Y,Z,W) + R(Y,Z,X,W) + R(Z,X,Y,W) = 0$ (the first Bianchi Identity).   \index{Bianchi's identities}
\end{enumerate}
A consequence of property (a) is that the value of $R(X,Y,Z,W)$ at a point $p\in M$ only depends on the values of the vectors
$X_p,Y_P,Z_p,W_p\in T_pM$ at this point  and not on the global, or even local, behavior of the vector fields. A consequence of (b) is that if 
$v,w \in T_pM$ are linearly independent, then the scalar 
$$
  K(v,w) = \frac{R(v,w;v,w)}{\|v\|^2\|w\|^2 - \bra u,v\ket^2}
$$
only depends on the $2$-plane $\sigma =\mathrm{Span}(u,v) \subset T_pM$ spanned by these vectors. This value is the \textit{sectional curvature} of $\sigma$.

\subsection{Moving the Frame with Élie Cartan}

The calculus of differential forms was developed in the 1920s by Élie Cartan, who employed it as a convenient formalism in Riemannian geometry.\footnote{Chern became acquainted with Cartan's calculus first through his interactions with E. Kähler in Hamburg, and later when he met Cartan in Paris. See his comments on Cartan in \cite{Interview}.}  Helpful references include Chapter 7 in \cite[Vol II]{Spivak}, Chapter 4 in T.J. Willmore's book \cite{Willmore}, and the books \cite{Cartan1925,Cartan} by Cartan himself.

\medskip 

Here we briefly recall the basic  definitions and we state and prove the main equations that are needed in this paper.  A \textit{moving frame} in some open set $U$ of the Riemannian manifold $(M,g)$ is simply a collection of $m$ smooth vector fields $e_1, \dots , e_m : U \to TU$ defined on $U$  such that $\bra e_i, e_j\ket = \delta_{ij}$  at every point of $U$. We will always assume the moving frame to be positively oriented. The  dual coframe $\theta^1, \dots \theta^m$  will be denoted as $\theta^i = e_i^\flat$. It  is defined by 
$$
 \theta^i(X) = g(e_i, X).
$$
\begin{definition} \label{defomegai}
The \emph{connection forms} associated with the moving frame $\{e_i\}$ are the $1$-forms defined on $U$  by:
$$
 \omega_{ij}(X) = g(e_i,\nabla_X e_j)= \theta^i(\nabla_X e_j),
$$
and the \emph{curvature forms} are the $2$-forms: 
$$
  \Omega_{ij}(X,Y) =   R(X,Y;e_i,e_j).
$$
\end{definition}
Observe that the connection forms are characterized by the identity 
$$
 \nabla_X e_j = \sum_{i=1}^m\omega_{ij}(X) e_i,
$$
note  also the skew symmetry: 
$$
  \omega_{ij}+ \omega_{ji} =   \Omega_{ij}+ \Omega_{ji} = 0.
$$

The next lemma gives us three useful formulas representing the curvature in the moving frame formalism:
\begin{lemma} \label{lem:formule_R}
Under the previously established notation for the moving coframe, the curvature forms $\Omega_{ij}$  can ve written as 
\begin{equation*}
\Omega_{ij} = \frac{1}{2} \sum_{k,l=1}^m R_{ijkl} \, \theta^k \wedge \theta^l.
\end{equation*}
And the curvature tensor $R$ is expressed as
\begin{align*}
R &= \frac{1}{4} \sum_{i,j,k,l=1}^m R_{ijkl} \, \theta^i \wedge \theta^j \otimes \theta^k \wedge \theta^l \\
&= \frac{1}{2} \sum_{i,j=1}^m \Omega_{ij} \otimes \theta^i \wedge \theta^j,
\end{align*}
where $R_{ijkl} = R(e_i,e_j,e_k,e_l)$.
\end{lemma}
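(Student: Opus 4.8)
The plan is to establish the three formulas in order, working from the definitions of the connection and curvature forms and the basic symmetries of $R$. First I would prove the expression for $\Omega_{ij}$. By definition $\Omega_{ij}(X,Y) = R(X,Y;e_i,e_j)$, which is a $2$-form in $(X,Y)$; expanding $X = \sum_k \theta^k(X) e_k$ and $Y = \sum_l \theta^l(Y) e_l$ and using $C^\infty(M)$-multilinearity of $R$ gives $\Omega_{ij}(X,Y) = \sum_{k,l} R(e_k,e_l;e_i,e_j)\,\theta^k(X)\theta^l(Y) = \sum_{k,l} R_{klij}\,\theta^k(X)\theta^l(Y)$. Using the pair symmetry $R_{klij} = R_{ijkl}$ and antisymmetrizing in $(k,l)$ (which is legitimate since $R_{ijkl} = -R_{ijlk}$), one rewrites this as $\tfrac12\sum_{k,l} R_{ijkl}\,\theta^k\wedge\theta^l$, which is the claimed identity. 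The only subtlety here is the normalization convention for $\wedge$, i.e. making sure $(\theta^k\wedge\theta^l)(X,Y) = \theta^k(X)\theta^l(Y)-\theta^k(Y)\theta^l(X)$ is the convention in force, so the factor $\tfrac12$ comes out correctly; this matches the $\owedge$ convention used throughout.

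Next I would derive the first expression for $R$ as a double form. The strategy is the same: $R$ is a $(2,2)$ double form, so by item (1) of Proposition \ref{prop.contractnform2} (or simply by expanding all four slots in the orthonormal coframe) we may write $R = \sum_{i,j,k,l} c_{ijkl}\,\theta^i\wedge\theta^j\otimes\theta^k\wedge\theta^l$ summed over $i<j$, $k<l$, where $c_{ijkl}$ is the value of $R$ on $(e_i,e_j;e_k,e_l)$, i.e. $R_{ijkl}$. Re-indexing to let $i,j,k,l$ range over all of $\{1,\dots,m\}$ introduces a factor $\tfrac14$ because of the antisymmetries of $R$ in $(i,j)$ and in $(k,l)$, giving $R = \tfrac14\sum_{i,j,k,l} R_{ijkl}\,\theta^i\wedge\theta^j\otimes\theta^k\wedge\theta^l$. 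Then the second expression for $R$ follows by combining this with the first formula of the lemma: substitute $\Omega_{ij} = \tfrac12\sum_{k,l} R_{ijkl}\,\theta^k\wedge\theta^l$ into $\tfrac12\sum_{i,j}\Omega_{ij}\otimes\theta^i\wedge\theta^j$ to recover $\tfrac14\sum_{i,j,k,l}R_{ijkl}\,\theta^i\wedge\theta^j\otimes\theta^k\wedge\theta^l$.

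I do not expect a genuine obstacle here; the entire proof is bookkeeping with the antisymmetry relations of $R$ and careful tracking of the combinatorial factors $\tfrac12$ and $\tfrac14$ that arise when passing between ordered and unordered index ranges. The one point that deserves care is consistency of sign and normalization conventions — the curvature sign convention fixed earlier (matching \cite{Spivak} and \cite{Willmore}) and the normalization of $\theta^k\wedge\theta^l$ — so that all three displayed formulas are mutually compatible. Once those conventions are pinned down, each step is a routine identification of coefficients.
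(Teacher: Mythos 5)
Your proposal is correct and follows essentially the same route as the paper: expand $\Omega_{ij}$ and $R$ in the orthonormal (co)frame, use the pair symmetry $R_{klij}=R_{ijkl}$ together with the antisymmetries in each pair to convert tensor products into wedges (producing the factors $\tfrac12$ and $\tfrac14$ under the determinant convention for $\wedge$), and then identify $\Omega_{ij}$ inside the resulting expression for $R$. The only cosmetic difference is that you expand $R$ in the ordered basis $\{\theta^i\wedge\theta^j\otimes\theta^k\wedge\theta^l\}_{i<j,\,k<l}$ (the appeal to Proposition \ref{prop.contractnform2}(1), stated for single forms, is unnecessary — your direct expansion suffices), whereas the paper antisymmetrizes the full tensor expansion $\sum R_{ijkl}\,\theta^i\otimes\theta^j\otimes\theta^k\otimes\theta^l$; the bookkeeping is the same.
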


\begin{proof}
From the symmetries of the curvature tensor we have $R_{klij} = R_{ijkl} =  -R_{ijlk}$,
therefore
$$
 \Omega_{ij} =   \sum_{k,l = 1}^m R_{klij} \, \theta^k \otimes \theta^l 
 =   \sum_{k,l = 1}^m R_{ijkl} \, \theta^k \otimes \theta^l 
  = \frac{1}{2} \sum_{k,l = 1}^m R_{ijkl} \, \theta^k \wedge \theta^l.
$$
This proves the first equation. The second equation is proved as follows
\begin{align*}
R  & =  \sum_{i,j,k,l = 1}^m R_{ijkl}  \left( \theta^i \otimes \theta^j\right) \otimes\left( \theta^k\otimes \theta^l \right) 
=   \sum_{i,j,k,l = 1}^m R_{ijkl} \left( \theta^k\otimes \theta^l \right) \otimes \left( \theta^i \otimes \theta^j\right) 
\\&= \frac{1}{4}  \sum_{i,j,k,l = 1}^m  R_{ijkl}\left( \theta^k\otimes \theta^l -\theta^l\otimes \theta^k\right) \otimes \left( \theta^i \otimes \theta^j- \theta^j\otimes \theta^i\right) 
\\ &= \frac{1}{4}  \sum_{i,j,k,l = 1}^m R_{ijkl}\left( \theta^k\wedge\theta^l \right) \otimes \left( \theta^i \wedge\theta^j\right) 
\\ &=  \frac{1}{2} \sum_{k,l = 1}^m \Omega_{ij} \otimes \theta^i\wedge\theta^j.
\end{align*}
\end{proof}

\begin{proposition}[{\'E}. Cartan's Structure Equations] 
The exterior derivative of the moving coframe and the connection forms are given by 
\begin{equation}\label{StructuresEq}
 d\theta^i =  -\sum_{j=1}^m\omega_{ij}\wedge \theta^j \quad  \mathrm{and} \quad  d\omega_{ij} = \Omega_{ij} 
- \sum_{k=1}^m \omega_{ik} \wedge \omega_{kj}.
\end{equation}
\end{proposition}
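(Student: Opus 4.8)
The plan is to derive both equations directly from the defining properties (i)--(iv) of the Levi-Civita connection together with the definitions of $\theta^i$, $\omega_{ij}$ and $\Omega_{ij}$, evaluating each side on a pair of vector fields and comparing. The standard intrinsic formula for the exterior derivative of a $1$-form,
$$
 d\alpha(X,Y) = X(\alpha(Y)) - Y(\alpha(X)) - \alpha([X,Y]),
$$
will be the main computational tool; one should fix that normalization at the outset since with the opposite convention a factor of $\tfrac12$ would appear.

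First I would prove the first structure equation. Apply the formula above to $\alpha = \theta^i$ and evaluate on vector fields $X,Y$. Write $X = \sum_k \theta^k(X) e_k$ and similarly for $Y$, and use property (iv) of the connection together with $\langle e_i,e_j\rangle = \delta_{ij}$ to rewrite $X(\theta^i(Y)) = X\langle e_i, Y\rangle = \langle \nabla_X e_i, Y\rangle + \langle e_i, \nabla_X Y\rangle$; subtract the symmetric term in $X,Y$. Property (iii), $\nabla_X Y - \nabla_Y X = [X,Y]$, makes the terms involving $\nabla Y$ and $\nabla X$ collapse into $\langle e_i, [X,Y]\rangle = \theta^i([X,Y])$, which cancels the last term in $d\theta^i(X,Y)$. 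What remains is $\langle \nabla_X e_i, Y\rangle - \langle \nabla_Y e_i, X\rangle$; expanding $Y = \sum_j \theta^j(Y) e_j$ and using the definition $\omega_{ji}(X) = \langle e_j, \nabla_X e_i\rangle$ together with the skew-symmetry $\omega_{ji} = -\omega_{ij}$ yields $-\sum_j (\omega_{ij}\wedge \theta^j)(X,Y)$, which is the claim.

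Next I would prove the second structure equation. Start from $\omega_{ij}(Y) = \langle e_i, \nabla_Y e_j\rangle$ and apply the same exterior-derivative formula to compute $d\omega_{ij}(X,Y)$. Each of the two derivative terms $X(\langle e_i, \nabla_Y e_j\rangle)$ is expanded by property (iv) into $\langle \nabla_X e_i, \nabla_Y e_j\rangle + \langle e_i, \nabla_X\nabla_Y e_j\rangle$; the term $\omega_{ij}([X,Y]) = \langle e_i, \nabla_{[X,Y]} e_j\rangle$ combines with the second-order terms to reconstitute the curvature operator, and by the sign convention for $R$ adopted in the excerpt this gives $\Omega_{ij}(X,Y) = R(X,Y;e_i,e_j) = \langle \nabla_{[X,Y]}e_j + \nabla_Y\nabla_X e_j - \nabla_X\nabla_Y e_j, e_i\rangle$. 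The leftover first-order cross terms $\langle \nabla_X e_i, \nabla_Y e_j\rangle - \langle \nabla_Y e_i, \nabla_X e_j\rangle$ are handled by inserting $\nabla_X e_i = \sum_k \omega_{ki}(X) e_k$ and the analogous expansions; using orthonormality and $\omega_{ki} = -\omega_{ik}$ these assemble into $-\sum_k (\omega_{ik}\wedge\omega_{kj})(X,Y)$. Collecting terms gives $d\omega_{ij} = \Omega_{ij} - \sum_k \omega_{ik}\wedge\omega_{kj}$.

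The main obstacle is purely bookkeeping: keeping the signs consistent throughout — the sign in the exterior-derivative formula, the antisymmetry of $\omega$ and $\Omega$, the order of indices in $\omega_{ki}(X) e_k$ versus $\omega_{ik}$, and the particular sign convention for $R$ used here (which differs from Chern's, as noted in \S\ref{sec.compchern}). It is easy to produce a sign error in the curvature identification; the cleanest safeguard is to verify the final formula on an orthonormal frame at a point where $\nabla e_i = 0$ (all $\omega_{ij}$ vanish at that point), where both equations reduce to the defining relations $d\theta^i = 0$ and $d\omega_{ij} = \Omega_{ij}$ at the point, matching the known expression $\Omega_{ij} = \tfrac12 R_{ijkl}\,\theta^k\wedge\theta^l$ from Lemma \ref{lem:formule_R}.
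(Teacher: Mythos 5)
Your plan follows the same route as the paper's own proof: both structure equations are obtained by evaluating the intrinsic formula for the exterior derivative of a $1$-form on $X,Y$, using property (iv) to differentiate the metric pairings, property (iii) to absorb the bracket term in the first equation, and the frame expansion together with the skew-symmetry of $\omega$ to turn the quadratic cross terms into $-\sum_k\omega_{ik}\wedge\omega_{kj}$. One sign slip to fix when you write it out: with the paper's convention $R(X,Y;Z,W)=\langle\nabla_{[X,Y]}Z+\nabla_Y\nabla_X Z-\nabla_X\nabla_Y Z,\,W\rangle$, the expression $\langle\nabla_{[X,Y]}e_j+\nabla_Y\nabla_X e_j-\nabla_X\nabla_Y e_j,\,e_i\rangle$ is $R(X,Y;e_j,e_i)=-\Omega_{ij}(X,Y)$, whereas the combination actually produced by the computation of $d\omega_{ij}(X,Y)$ is its negative, namely $\langle e_i,\nabla_X\nabla_Y e_j-\nabla_Y\nabla_X e_j-\nabla_{[X,Y]}e_j\rangle=+\Omega_{ij}(X,Y)$, so the identification and the final formula are unaffected once the sign is tracked correctly (your normal-frame check would catch this).
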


\medskip 

\textbf{Proof.}
We prove the first structure equation as follows:
\begin{align*}
 d\theta^i(X,Y) &= X (\theta^i(Y)) - Y (\theta^i(X)) - \theta^i ([X,Y])
\\ &= X  g(e_i, Y) - Y  g(e_i, X) -  g(e_i,[X,Y])
\\ &=  g(\nabla_X e_i, Y) + g(e_i, \nabla_XY)  - g(\nabla_Y e_i, X) - g(e_i, \nabla_YX) -  g(e_i,[X,Y])
\\ &=  g(\nabla_X e_i, Y) - g(\nabla_Y e_i, X)
\\&= \sum_{j= 1}^m \left( g(\nabla_X e_i, e_j)g(e_j, Y)  -  g(\nabla_Y e_i,e_j) g(e_j, X) \right) 
\\&= \sum_{j= 1}^m \left(\omega_{ji}(X)\theta^j(Y) - \omega_{ji}(Y)\theta^j(X)\right). 
\\&= - \sum_{j= 1}^m  \left(\omega_{ij}\wedge \theta^j\right) (X,Y)
\end{align*}
(we have used the symmetry of  $\nabla$ on the fourth line). 
To prove the second structure equation, we compute 
\begin{align*}
 d\omega_{ij} (X,Y) &= X (\omega_{ij}(Y)) - Y( \omega_{ij}(X)) -\omega_{ij}([X,Y])
\\&= X(g(e_i,\nabla_Y e_j)) - Y(g(e_i,\nabla_X e_j)) - g( e_i,\nabla_{[X,Y]}e_j)
\\&= (g(\nabla_X e_i,\nabla_Y e_j) + g(e_i,\nabla_X \nabla_Y e_j)) - (g(\nabla_Y e_i,\nabla_Xe_j) - g(e_i,\nabla_Y \nabla_Xe_j)) -  g( e_i,\nabla_{[X,Y]}e_j)
\\&=  g(\nabla_X e_i,\nabla_Y e_j) - g(\nabla_Y e_i,\nabla_X e_j)  +  g( e_i, \nabla_X \nabla_Y e_j - \nabla_Y\nabla X e_j -\nabla_{[X,Y]} e_j) 
\\ &= g(\nabla_X e_i,\nabla_Y e_j) -  g(\nabla_Y e_i,\nabla_X e_j)  -  R(X,Y;e_j,e_i). 
\end{align*}

Observe now that on the one  hand  
$$
 g(\nabla_X e_i,\nabla_Ye_j) =  \sum_{k=1}^m \theta^k(\nabla_Xe_i) \theta^k(\nabla_Ye_j) = \sum_{k=1}^m \omega_{k,i}(X)\omega_{k,j}(Y) = - \sum_{k=1}^m  \omega_{i,k}(X)\omega_{k,j}(Y),
$$
therefore
$$
 g(\nabla_X e_i,\nabla_Y e_j) -  g(\nabla_Y e_i,\nabla_X e_j)    =  -\sum_{k=1}^m \omega_{ik} \wedge \omega_{kj} (X,Y).
$$
On the other hand  we have 
$$
  d\omega_{ij} (X,Y) =  \Omega_{ij} (X,Y) -  \sum_{k=1}^m \omega_{ik} \wedge \omega_{kj} (X,Y),
$$
Because \ $-R(X,Y;e_j,e_i)  =  R(X,Y;e_i,e_j) = \Omega_{ij} (X,Y)$.
\qed

\medskip 
 
\begin{lemma}[The Bianchi Identities]   \index{Bianchi's identities}
The curvature forms also satisfy  the following identities:
\begin{equation}\label{Bianchi1et2}
  \sum_{j=1}^m \Omega_{ij}  \wedge \theta^j  = 0 \quad  \mathrm{and} \quad  
  d\Omega_{ij}  =  \sum_{k=1}^m \left(\Omega_{ik}\wedge \omega_{kj} -\omega_{ik} \wedge  \Omega_{kj}\right).
\end{equation}
\end{lemma}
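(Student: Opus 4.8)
The plan is to derive both Bianchi identities by differentiating Cartan's structure equations \eqref{StructuresEq} and using the fact that $d^2 = 0$. This is the standard ``differentiate the structure equations'' argument, and the computations are short once the bookkeeping is set up carefully.

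First I would prove the first Bianchi identity (the algebraic one). Starting from the first structure equation $d\theta^i = -\sum_j \omega_{ij}\wedge\theta^j$, I apply $d$ to both sides and use $d^2\theta^i = 0$:
$$
0 = d\left(d\theta^i\right) = -\sum_{j=1}^m \left( d\omega_{ij}\wedge\theta^j - \omega_{ij}\wedge d\theta^j\right).
$$
Now substitute the second structure equation for $d\omega_{ij}$ and the first structure equation again for $d\theta^j$. The terms involving products of three connection forms cancel in pairs (after relabeling summation indices and using that $1$-forms anticommute), and what remains is $-\sum_j \Omega_{ij}\wedge\theta^j = 0$, which is the first identity in \eqref{Bianchi1et2}.

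Next I would prove the second Bianchi identity (the differential one). Here I differentiate the second structure equation $d\omega_{ij} = \Omega_{ij} - \sum_k \omega_{ik}\wedge\omega_{kj}$, again using $d^2\omega_{ij}=0$:
$$
0 = d\Omega_{ij} - \sum_{k=1}^m \left( d\omega_{ik}\wedge\omega_{kj} - \omega_{ik}\wedge d\omega_{kj}\right).
$$
Substituting the second structure equation for each $d\omega_{ik}$ and $d\omega_{kj}$, the purely-connection terms (products of three $\omega$'s) again cancel against each other, leaving
$$
d\Omega_{ij} = \sum_{k=1}^m\left(\Omega_{ik}\wedge\omega_{kj} - \omega_{ik}\wedge\Omega_{kj}\right),
$$
which is the second identity.

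I expect the main obstacle to be purely combinatorial: tracking the cancellation of the cubic connection-form terms, since these involve double sums over indices with sign factors coming from the anticommutativity of $1$-forms, and one must relabel indices correctly (e.g.\ swapping $k$ and a new summation index) to see the pairing. No deep idea is needed — only care with the indices and signs — so I would present the cancellation explicitly in one or two lines for each identity rather than leaving it to the reader.
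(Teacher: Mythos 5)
Your proposal is correct and follows essentially the same route as the paper's proof: differentiate the two structure equations \eqref{StructuresEq}, use $d^2\theta^i=0$ and $d^2\omega_{ij}=0$, substitute the structure equations back in, and observe that the cubic connection-form terms cancel after relabeling indices. The only difference is cosmetic (you start from $d(d\theta^i)=0$ while the paper writes the same fact as $0=d\bigl(\sum_j\omega_{ij}\wedge\theta^j\bigr)$), so no further changes are needed.
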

\begin{proof}
Since $d^2\theta^i = 0$ and $ d^2\omega_{ij} = 0$, we have for $1 \leq i \leq m$, 
\begin{align*}
 0 &= d \left(\sum_{j=1}^m\omega_{ij}\wedge \theta^j\right)  = 
\sum_{j=1}^md\omega_{ij}\wedge \theta^j - \sum_{j=1}^m\omega_{ij}\wedge d\theta^j  
\\ &= 
 \sum_{j=1}^m \left(\Omega_{ij} 
-  \sum_{k=1}^m \omega_{ik} \wedge \omega_{kj}\right) \wedge \theta^j + \sum_{j=1}^m\omega_{ij}\wedge  \left(\sum_{k=1}^m\omega_{jk}\wedge \theta^k\right) 
\\ & =
 \sum_{j=1}^m \Omega_{ij}  \wedge \theta^j + \sum_{j,k=1}^m \left(-\omega_{ik} \wedge \omega_{kj} \wedge \theta^j  + \omega_{ij}\wedge\omega_{jk}\wedge \theta^k \right) 
\\ & =
 \sum_{j=1}^m \Omega_{ij}  \wedge \theta^j,
\end{align*}
which proves the first equation. To prove the second equation, we compute
\begin{align*}
0 &= d^2 \omega_{ij} =  d\left(\Omega_{ij} -  \sum_{k=1}^m \omega_{ik} \wedge \omega_{kj}\right) 
=  d \Omega_{ij} -  \sum_{k=1}^m d\omega_{ik} \wedge \omega_{kj}    +  \sum_{k=1}^m \omega_{ik} \wedge d\omega_{kj} 
\\ &=  d \Omega_{ij} -  \sum_{k=1}^m \left(\Omega_{ik} -  \sum_{s=1}^m \omega_{is} \wedge \omega_{sk}\right) \wedge \omega_{kj}   +
 \sum_{k=1}^m \omega_{ik} \wedge  \left(\Omega_{kj} - \sum_{s=1}^m \omega_{ks} \wedge \omega_{sj}\right)  
\\& =
d \Omega_{ij} -  \sum_{k=1}^m \left(\Omega_{ik}\wedge \omega_{kj} -\omega_{ik} \wedge  \Omega_{kj}\right) 
+  \sum_{k,s=1}^m \left(\omega_{is} \wedge \omega_{sk}\wedge \omega_{kj}  - \omega_{ik} \wedge  \omega_{ks} \wedge \omega_{sj}\right) 
\\&= d \Omega_{ij} -  \sum_{k=1}^m \left(\Omega_{ik}\wedge \omega_{kj} -\omega_{ik} \wedge  \Omega_{kj}\right).  
\end{align*}
\end{proof}

We conclude this section by discussing the effect  of changing the  moving frame. Suppose that $e_1', \dots,  e_m'$ is another orthonormal moving frame defined on $U$; then we have 
$$
  e_j' = \sum_{i=1}^m a_{ij} e_i,  
$$
where $a = (a_{ij})$ is a smooth function defined on $U$ with values in the orthogonal group $O(m)$. The corresponding moving coframe is 
$$
 \theta'^j  = \sum_{i=1}^m a_{ij} \theta^i
$$
(indeed we have $\theta'^j(X) = g\left(e'_j, X \right)  = \sum_{i=1}^m a_{ij}g (e_i,X) = \sum_{i=1}^m a_{ij} \theta^i(X)$).  About  the connection and curvature forms, we have the
\begin{lemma}\label{lem.chgframeomega}
Under a change of moving frame, the connection forms transform as
$$
  \omega'_{ij} = \sum_{r,s=1}^m a_{si}a_{rj} \omega_{sr}   + \sum_{r=1}^m a_{ri}   da_{rj},
$$
and the curvature  forms transform as
$$
  \Omega'_{ij} = \sum_{r,s=1}^m a_{si}a_{rj} \Omega_{sr}  .
$$
\end{lemma}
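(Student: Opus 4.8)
The plan is to derive the transformation rules directly from the defining relations $\theta'^j = \sum_i a_{ij}\theta^i$ and $e'_j = \sum_i a_{ij} e_i$, using the Structure Equations \eqref{StructuresEq} and the fact that the connection forms are characterized by $\nabla_X e'_j = \sum_i \omega'_{ij}(X) e'_i$. I would present two approaches and pick the cleaner one. The first is purely computational via Koszul-type manipulation: expand $\omega'_{ij}(X) = g(e'_i, \nabla_X e'_j)$ using bilinearity of $g$, the Leibniz rule for $\nabla$, and $\nabla_X e_s = \sum_r \omega_{rs}(X) e_r$. This gives
\begin{align*}
\omega'_{ij}(X) &= g\Bigl(\sum_s a_{si} e_s,\ \nabla_X\bigl(\sum_r a_{rj} e_r\bigr)\Bigr)
= g\Bigl(\sum_s a_{si} e_s,\ \sum_r \bigl(X(a_{rj}) e_r + a_{rj}\nabla_X e_r\bigr)\Bigr)\\
&= \sum_{s,r} a_{si}a_{rj}\, g(e_s,\nabla_X e_r) + \sum_{s,r} a_{si} X(a_{rj})\, g(e_s, e_r)\\
&= \sum_{s,r} a_{si}a_{rj}\, \omega_{sr}(X) + \sum_r a_{ri}\, da_{rj}(X),
\end{align*}
which is exactly the stated formula once one evaluates on an arbitrary $X$ and strips it off. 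The orthogonality $a\in O(m)$ is used only through $g(e_s,e_r)=\delta_{sr}$ and is not otherwise needed for the connection formula.

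For the curvature forms the computation is even shorter: by Definition \ref{defomegai}, $\Omega'_{ij}(X,Y) = R(X,Y; e'_i, e'_j)$, and since $R$ is $C^\infty(M)$-multilinear in all four slots, substituting $e'_i = \sum_s a_{si} e_s$ and $e'_j = \sum_r a_{rj} e_r$ yields
$$
\Omega'_{ij}(X,Y) = \sum_{s,r} a_{si} a_{rj}\, R(X,Y; e_s, e_r) = \sum_{s,r} a_{si} a_{rj}\, \Omega_{sr}(X,Y),
$$
again giving the claimed identity after removing $(X,Y)$. Alternatively one can rederive the curvature formula from the connection formula by applying $d$ to the first relation and using the second Structure Equation, which would serve as a consistency check; I would at most remark on this rather than carry it out, since the direct argument above is immediate.

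There is essentially no serious obstacle here — the statement is a bookkeeping exercise in the moving-frame formalism. The only mild subtlety worth flagging for the reader is the placement of indices on $a_{rj}$ versus $a_{jr}$ (i.e., keeping straight that $a$ acts on frames via $e'_j = \sum_i a_{ij} e_i$, so the summation index is the \emph{first} subscript), and the appearance of the inhomogeneous term $\sum_r a_{ri}\, da_{rj}$ reflecting that the connection forms do not transform tensorially. I would organize the written proof as: first recall $\nabla_X e'_j = \sum_i \omega'_{ij}(X) e'_i$ and the expansion $e'_j = \sum a_{ij} e_i$; then run the displayed computation for $\omega'_{ij}$; then run the one-line computation for $\Omega'_{ij}$ using multilinearity of $R$; and conclude. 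No appeal to anything beyond Definition \ref{defomegai} and the Koszul properties of $\nabla$ is required.
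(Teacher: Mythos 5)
Your proposal is correct and follows essentially the same route as the paper: expand $\nabla_X e'_j$ by the Leibniz rule and pair with $e'_i$ (the paper writes this pairing as $\theta'^i(\nabla_X e'_j)$, which is the same as your $g(e'_i,\nabla_X e'_j)$), yielding both the tensorial term and the inhomogeneous term $\sum_r a_{ri}\,da_{rj}$; the curvature formula is likewise obtained from the $C^\infty(M)$-multilinearity of $R$. No meaningful difference from the paper's argument.
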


\textbf{Remark.} In this lemma we don't need to assume that our moving frames are positively oriented.  In matrix notation, the transformation formula for the connection can be written as
$$
  \omega' =  a^{-1}\omega a + a^{-1}da
$$ 
(compare with \cite[vol. 2, page 280]{Spivak}). This is the standard \textit{gauge transformation formula} for connections in a principal bundle. Note however that in our case the inverse matrix $a^{-1}$ is simply its transpose.

\begin{proof}
Recall first that 
$$
 \nabla_X e_j' =  \nabla_X e_j' =   \nabla_X\left( \sum_{r=1}^m a_{rj}  e_r \right)  =  
\sum_{r=1}^m a_{rj} \nabla_X e_r  +  \sum_{r=1}^m da_{rj}(X)  e_r.
$$
Therefore 
\begin{align*}
  \omega'_{ij} (X) &=  \theta'^i\left(\nabla_X e'_j \right) = \sum_{r,s=1}^m a_{si}a_{rj} \left( \theta^s\left( \nabla_X e_r\right)  +   da_{rj}(X) \theta^s( e_r) \right) 
\\&=
\sum_{r,s=1}^m a_{si}a_{rj} \omega_{sr}(X)  + \sum_{r=1}^m a_{ri}   da_{rj}(X).
\end{align*}
 This proves the first formula. The proof of the second formula is immediate from the definition: 
$$
   \Omega'_{ij}(X,Y) =   R(X,Y;e'_i,e'_j) = \sum_{r,s=1}^m a_{si}a_{rj}  R(X,Y;e_r,e_s)  = 
   \sum_{r,s=1}^m a_{si}a_{rj}\Omega_{sr}(X,Y) ,
$$
because of the tensorial nature of $R$.
\end{proof}


\bigskip

\paragraph{Acknowledgement.}  The author extends his gratitude to Niky Kamran for his encouragements and meticulous review of the manuscript, as well as to Ruth Kellerhals for her comments on the case of space forms and  for pointing to  the precise reference in Ratcliffe's  treatise. 
We also acknowledge  Adrien Marcone for the valuable discussions we had regarding the Gauss--Bonnet Chern formula; his thesis \cite{Marcone} has  been a helpful resource in preparing the current paper.
Special thanks are also due to Jean-Pierre Bourguignon for his comments and  for drawing our attention to the notable paper \cite{Lanczos} by C. Lanczos.


\begin{thebibliography}{99}

\bibitem{Allendoerfer}
Allendoerfer, C. B. The Euler number of a Riemann manifold. Am. J. Math. 62, 243--248 (1940).


\bibitem{AW}
Allendoerfer, C. B. and Weil, A. The Gauss--Bonnet theorem for Riemannian polyhedra. Trans. Am. Math. Soc. 53, 101--129 (1943).

\bibitem{Besse}
Besse, A.L.  Einstein manifolds.
Reprint of the 1987 edition. Classics in Mathematics. Springer-Verlag, Berlin, 2008.

\bibitem{Bourguignon}
Bourguignon, J. P.   ``Entretien avec un optimiste, S. S. Chern'' 
[Interview with an optimist, S. S. Chern], 
Gaz. Math. 48 (1991), p. 5--10.
An English translation was published in S. S. Chern:  A great geometer of the twentieth century (1992). \\
The video of the interview is available on the Youtube channel of IHES: \url{https://www.youtube.com/watch?v=vConuqi5vT0}

\bibitem{Bredon}
Bredon, G, 
{Topology and Geometry}, Graduate Texts in Mathematics,
Springer-Verlag, 1993.


\bibitem{Cartan1925}
Cartan, É. La Géométrie des espaces de Riemann. Gauthier-Villars, 1925.


\bibitem{Cartan}
Cartan, É. Riemannian geometry in an orthogonal frame. World Scientific Publishing Co., Inc., River Edge, NJ, 2001.

\bibitem{CheegerGromov}
Cheeger, J. \& Gromov, M. On the characteristic numbers of complete manifolds of bounded curvature and finite volume. In Differential geometry and complex analysis, Vol. dedicated to  H. E. Rauch, p, 115--154 (1985). 


\bibitem{Chern1944}
Chern, S.S. A simple intrinsic proof of the Gauss--Bonnet formula for closed Riemannian manifolds. Ann. of Math. (2). 45, 747--752 (1944).

\bibitem{Chern1945}
Chern, S.S. On the curvatura integra in a Riemannian manifold. Ann. of Math. (2). 46, 674--684 (1945).

\bibitem{Chern1990}
Chern, S.S. Historical remarks on Gauss--Bonnet, in Analysis, et Cetera. Research Papers Published in Honor of Jürgen Moser's 60th Birthday, p. 209–217. Academic Press, Inc., Boston, MA, 1990.

\bibitem{deRham} De Rham,  G. 
{Differentiable Manifolds: Forms, Currents, Harmonic Forms},
Grundlehren der mathematischen Wissenschaften,
Springer-Verlag,
1984.

\bibitem{Fenchel}
Fenchel, W. On the total curvature of Riemannian manifolds, I, J. London Math. Soc. 15, 15--22, (1940).

 
\bibitem{Gottlieb1996}
Gottlieb,  D. H. All the Way with Gauss--Bonnet and the Sociology of Mathematics. 
The American Mathematical Monthly, Vol. 103, No. 6,  457--469, (1996).


\bibitem{Gray}
Gray,  A. Tubes,  (second edition). Progr. Math., 221, Birkhäuser Verlag, Basel, 2004.

\bibitem{Gromov1982}
Gromov, M.
Volume and bounded cohomology.
Inst. Hautes Études Sci. Publ. Math., no.56, 5–99, (1982).

\bibitem{Gromov1981}
Gromov, M.
Hyperbolic manifolds according to Thurston and Jørgensen, 
In: Séminaire Bourbaki vol. 1979/80, Lecture Notes in Math., 842, Springer-Verlag, Berlin, 40--53 (1981).


\bibitem{GuilleminPollack}
Guillemin, V. and Pollack, A. Differential topology. Prentice-Hall, Inc., Englewood Cliffs, N.J., 1974.

\bibitem{Haefliger1960}
Haefliger, A. Quelques remarques sur les applications différentiables d'une surface dans le plan. Ann. Inst. Fourier (Grenoble). 10, 47--60 (1960).


\bibitem{Harder}
Harder, G.
A Gauss--Bonnet formula for discrete arithmetically defined groups.
Ann. Sci. École Norm. Sup. (4), 409–455, (1971).


\bibitem{Hopf1925}
Hopf, H. Über die Curvatura integra geschlossener Hyperflächen. Math. Ann. 95,  340--367 (1925).

\bibitem{Hopf1925a}
Hopf, H. Die Curvatura Integra Clifford-Kleinscher Raumformen, 
Nachr. Ges.Wiss. Gottingen, Math.-Phys. Kl.,  131--141, (1925).

\bibitem{Hopf1926}
Hopf, H. Vektorfelder in $n$-dimensionalen Mannigfaltigkeiten. Math. Ann. 96,  225–250 (1926).


\bibitem{Interview}  Jackson,  A. 
Interview with Shiing-Shen Chern.  
Notices Amer. Math. Soc. 45, no. 7, 860–865, (1998). 
  

\bibitem{Kellerhals1984}
Kellerhals, R. \& Zehrt, T. The Gauss--Bonnet formula for hyperbolic manifolds of finite volume. Geom. Dedicata. 84, 49--62 (2001).

\bibitem{Kulkarni}
Kulkarni, R. On the Bianchi Identities. Math. Ann. 199, 175--204 (1972).

\bibitem{Labbi}
Labbi, M. Double forms, curvature structures and the (p,q)-curvatures. Trans. Amer. Math. Soc. 357, 3971--3992 (2005).

\bibitem{Lanczos} Lanczos, C.
A remarkable property of the Riemann--Christoffel tensor in four dimensions.
Ann. of  Math. (2) 39, 842--850 (1938). 


\bibitem{Madsen}
  Madsen, I.  and   Tornehave, J. 
{From Calculus to Cohomology: De Rham Cohomology and Characteristic Classes}.
Cambridge University Press, 1997.

\bibitem{Marcone}
Marcone, A. \,  
{A Gauss--Bonnet Theorem for Asymptotically Conical Manifolds and Manifolds with Conical Singularities}.
Ph.D. Thesis No. 9275, EPFL, 2019. \\
Available at \url{https://infoscience.epfl.ch/record/262901}

\bibitem{Milnor}
Milnor, J. Topology from the differentiable viewpoint. 
The University Press of Virginia, Charlottesville, Va., 1965.

\bibitem{Nash1956}
Nash, J. The imbedding problem for Riemannian manifolds. Ann. Of Math. (2). 63, 20--63 (1956).

\bibitem{Pugh1968}
Pugh, C. A generalized Poincaré index formula. Topology. 7, 217--226 (1968).

\bibitem{Rosenberg} 
Rosenberg, S. 
On the Gauss--Bonnet theorem for complete manifolds.
Trans. Am. Math. Soc. 287, 745--753 (1985). 

\bibitem{Ratcliffe2019}
Ratcliffe, J. G.  
Foundations of Hyperbolic Manifolds,  3rd expanded edition.
Graduate Texts in Mathematics, Vol. 149, (2019)
Springer.


\bibitem{RatcliffeTschantz} 
Ratcliffe, J. \& Tschantz, S.
The volume spectrum of hyperbolic 4-manifolds. 
Experiment. Math.9,  101–125, (2000).

\bibitem{Samelson}
Samelson, H.
On immersion of manifolds.
Canadian J. Math.12, 529–534, (1960).

\bibitem{Spivak}
Spivak, M. A comprehensive introduction to differential geometry. Vol. I--V, third edition. Publish or Perish, Inc., 1999.

\bibitem{Thorpe1979}
Thorpe, J. A. Elementary Topics in Differential Geometry. Undergraduate Texts in Mathematics (UTM), Springer-Verlag 1979.

\bibitem{Thurston}
Thurston W. P., The Geometry and Topology of 3-manifolds, lecture notes, Princeton University (1979), available at \url{http://msri.org/publications/books/gt3m}.

\bibitem{Weyl}
Weyl, H. On the Volume of Tubes. Amer. J. Math. 61, 461--472 (1939).

\bibitem{Willmore}
Willmore, T. Riemannian geometry. The Clarendon Press, Oxford University Press, New York, 1993.

\bibitem{Wu2008}
Wu, H. Historical development of the Gauss--Bonnet Theorem. Sci. China Ser. A. 51, 777--784 (2008).

\end{thebibliography}
\end{document}